\documentclass[11pt]{amsart}
\usepackage[margin=2.75cm]{geometry}

\usepackage[T1]{fontenc}
\usepackage{lmodern}
\usepackage{amssymb,amsmath,amsthm,mathrsfs}
\usepackage{microtype}
\usepackage{enumitem}
\usepackage{needspace}
\usepackage{xcolor}
\usepackage[colorlinks,allcolors=blue]{hyperref}
\definecolor{gpblue}{RGB}{0,0,255}
\definecolor{gpOrange}{RGB}{226,107,0}
\definecolor{gpGreen}{RGB}{0,127,70}
\definecolor{gpRed}{RGB}{207,49,48}
\definecolor{gpViolet}{RGB}{136,63,152}
\definecolor{gpGold}{RGB}{165,139,24}

\allowdisplaybreaks[2]
\newtheorem{theorem}{Theorem}[section]
\newtheorem{proposition}[theorem]{Proposition}
\newtheorem{lemma}[theorem]{Lemma}
\newtheorem{corollary}[theorem]{Corollary}
\theoremstyle{definition}
\newtheorem{definition}[theorem]{Definition}
\newtheorem{example}[theorem]{Example}
\newtheorem{application}[theorem]{Application}
\theoremstyle{remark}
\newtheorem{remark}[theorem]{Remark}

\newcommand{\C}{\mathcal C}
\newcommand{\one}{\mathbf 1}
\newcommand{\CC}{\mathbb C}
\newcommand{\kk}{\Bbbk}
\newcommand{\Exch}{\mathfrak E}
\DeclareMathOperator{\Hom}{Hom}
\DeclareMathOperator{\End}{End}
\DeclareMathOperator{\Aut}{Aut}
\DeclareMathOperator{\Alg}{Alg}
\DeclareMathOperator{\Irr}{Irr}
\DeclareMathOperator{\Corep}{Corep}
\DeclareMathOperator{\cdeg}{cdeg}
\DeclareMathOperator{\HH}{HH}
\DeclareMathOperator{\Rep}{Rep}
\DeclareMathOperator{\id}{id}
\DeclareMathOperator{\Vect}{Vec}
\DeclareMathOperator{\ev}{ev}
\DeclareMathOperator{\coev}{coev}

\title[Local rigidity for separable algebra objects]
{Local Ocneanu rigidity for separable algebra objects}

\author{Tinhinane Amina Azzouz}
\address{T.A. Azzouz, Beijing Institute of Mathematical Sciences and Applications, Huairou District, Beijing, China}
\email{azzouzta@bimsa.cn}

\author{Mainak Ghosh}
\address{M. Ghosh, Hetao Institute of Mathematics and Interdisciplinary Sciences, Futian District, Shenzhen, China}
\email{ghosh.main@gmail.com}

\author{S\'ebastien Palcoux}
\address{S. Palcoux, Beijing Institute of Mathematical Sciences and Applications, Huairou District, Beijing, China}
\email{sebastienpalcoux@gmail.com}
\urladdr{https://sites.google.com/view/sebastienpalcoux}

\newcommand{\InternalRoadmapAddon}{}

\keywords{Local Ocneanu rigidity; separable algebra object; Hochschild
cohomology; affine variety; Zariski tangent space; B\'ezout bound;
Frobenius subalgebra; left coideal subalgebra; intermediate operator algebra}
\subjclass[2020]{Primary 18M05, 46L37;
Secondary 14B10, 14R20, 16E40, 16T05}

\hypersetup{
  pdftitle={Local Ocneanu rigidity for separable algebra objects},
  pdfauthor={Tinhinane Amina Azzouz, Mainak Ghosh, and S\'ebastien Palcoux},
  pdfsubject={Local rigidity and effective bounds for separable algebra objects},
  pdfkeywords={local Ocneanu rigidity, separable algebra object, Hochschild cohomology, affine variety, Zariski tangent space, Bezout bound, Frobenius subalgebra, left coideal subalgebra, intermediate operator algebra}
}

\begin{document}
\begin{abstract}
We establish local Ocneanu rigidity for separable algebra objects in
Hom-finite monoidal categories $\C$ over an algebraically closed field $\kk$.
A separability morphism contracts the first two Hochschild cohomology
groups without requiring an abelian ambient category.  Separable algebra
structures and homomorphisms from separable sources have open algebraic-group
orbits, and affine B\'ezout estimates give effective finiteness results.
For Frobenius subalgebras, exchange relations replace the source and
embedding data by a single self-dual idempotent.  We prove that their
separable inner-conjugacy classes are open in the exchange locus,
yielding a bound of $2^{\dim_{\kk}\End_{\C}(X)}$ for nonzero ambient algebras;
connectedness gives the same bound on the actual number of subalgebras.
As an application, we obtain an effective form of the
Etingof--Walton finiteness theorem: every finite-dimensional semisimple
Hopf algebra $H$ over $\CC$ has at most $2^{\dim_{\CC}H}$ left coideal
subalgebras.  In the unitary setting, we bound $E$-compatible intermediates of
$C^*$-algebra and von Neumann algebra inclusions up to unitary conjugacy,
under finite-index and finite-center hypotheses.
For irreducible subfactors, these improve the
$9^{[M:N]}$ bound of Bakshi--Das--Liu--Ren to $2^{[M:N]}$.
\end{abstract}

\maketitle

\setcounter{tocdepth}{1}
\tableofcontents

\section{Introduction}

Ocneanu rigidity is a global finiteness principle for fusion categories,
tensor functors, and semisimple module categories: once the relevant discrete
data are fixed, the corresponding structures admit no nontrivial
infinitesimal deformations.  Following the historical account in Etingof--Nikshych--Ostrik
\cite{ENO}, the unitary argument was suggested by Ocneanu and was
subsequently developed algebraically by Blanchard and Wassermann;
that paper gives a general published proof for fusion categories; for
semisimple module categories with a prescribed number of simple isomorphism
classes in characteristic zero,
see \cite[Corollary~2.35 and \S{}7]{ENO}.  We use
the terminology of \cite{EGNO}.  Related deformation complexes were
introduced by Crane--Yetter \cite{CraneYetter} and Davydov
\cite{Davydov}.

This paper studies a local counterpart.  The ambient monoidal category and
the underlying object are fixed, while an algebra structure, an algebra map,
or a subalgebra embedding is allowed to vary.  Our purpose is to isolate the
algebraic mechanism, determine the exact
hypotheses, make the conclusion effective, and connect it with Frobenius
subalgebras, rational representations of linearly reductive groups,
left coideal subalgebras of semisimple Hopf algebras, and intermediate
operator algebras.

The deformation-theoretic core is elementary and categorical.  If $A$ is a
separable algebra object and $M$ is an $A$--$A$-bimodule, a separability
morphism defines contractions in Hochschild degrees one and two.
No abelian structure is needed, and
\[
  \HH^1(A,M)=\HH^2(A,M)=0.
\]
The two vanishings serve different purposes.  The equality
$\HH^2(A,A)=0$ identifies infinitesimal deformations of the multiplication
with infinitesimal changes of coordinates, whereas
$\HH^1(A,{}_fX_f)=0$ identifies infinitesimal deformations of an algebra map
$f:A\to X$ with infinitesimal inner conjugations.  The deformation
viewpoint goes back to Gerstenhaber \cite{Gerstenhaber} and, for algebra
morphisms, Gerstenhaber--Schack \cite{GerstenhaberSchack83}.
Ostrik \cite{Ostrik} already sketches the
Hochschild-cohomological route to rigidity for categories arising from
conformal field theory.
Here the low-degree contractions are explicit without an abelian ambient
category (Proposition~\ref{prop:hochschild-vanishing}); for the abelian
monoidal setting, see Ardizzoni--Menini--\c{S}tefan \cite{AMS}.

Our first main result is qualitative.

\begin{theorem}\label{thm:intro-local}
Let $\kk$ be an algebraically closed field and let $\C$ be a $\kk$-linear
monoidal category with finite-dimensional
morphism spaces.
\begin{enumerate}
  \item Each fixed object $B$ in $\C$ supports only finitely many isomorphism
  classes of separable unital algebra structures
  \textup{(Theorem~\ref{thm:fixed-object})}.
  \item For separable $A$ and any algebra object $X$, the unital algebra maps
  $A\to X$ form finitely many inner-conjugacy orbits.  If $X$ is connected,
  the map set itself is finite
  \textup{(Theorem~\ref{thm:homomorphisms})}.
  \item If only finitely many isomorphism classes of objects occur as
  subobjects of $X$, then the separable algebra subobjects of $X$ form
  finitely many inner-conjugacy classes and, when $X$ is connected, finitely
  many equivalence classes over $X$
  \textup{(Theorem~\ref{thm:subobjects})}.
\end{enumerate}
\end{theorem}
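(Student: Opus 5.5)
The plan is the classical Gerstenhaber--Voigt argument: realize each structure as a point of an affine variety acted on by a linear algebraic group, show orbits of separable points are open via the vanishing of $\HH^2$ or $\HH^1$ from Proposition~\ref{prop:hochschild-vanishing}, and conclude finiteness from the Noetherian property. An affine variety has finitely many irreducible components, so it contains only finitely many pairwise disjoint nonempty open subsets.

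\emph{Part (1).} Fix $B$ and set $V=\Hom_\C(B\otimes B,B)\times\Hom_\C(\one,B)$, a finite-dimensional affine space. Unital associative structures $(m,u)$ form a closed subvariety $\mathcal A\subset V$ cut out by polynomial equations. The group $G=\Aut_\C(B)$ is the unit group of the finite-dimensional algebra $\End_\C(B)$, hence an affine algebraic group, and it acts by transport of structure; orbits are exactly isomorphism classes. Separable points form a constructible (indeed open) subset $\mathcal A^{\rm sep}$: separability means $m$ admits a bimodule section $\Delta$, which is a linear condition on $\Delta$ whose solvability is a rank condition.

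For separable $(m,u)$, compute the Zariski tangent space of $\mathcal A$ at $(m,u)$. It sits inside the unital Hochschild $2$-cocycles, so $\dim T_{(m,u)}\mathcal A\le \dim Z^2$. The tangent space to the orbit is the image of $\End_\C(B)\to C^2$, $\phi\mapsto\delta\phi$, which is exactly $B^2$. Since $\HH^2(A,A)=0$, we get $\dim T\mathcal A\le\dim T(G\cdot x)=\dim G\cdot x$. This forces the orbit to be open in $\mathcal A$ and $\mathcal A$ to be smooth at $x$, because $\dim_x\mathcal A\le\dim T_x\mathcal A$ and the orbit is locally closed of dimension $\dim G\cdot x$. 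The unit needs checking: unital deformations must be handled, for instance by normalizing cochains, or by noting that the unit is determined by $m$ for unital algebras. Distinct separable orbits are then disjoint open subsets of $\mathcal A$, so there are finitely many.

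\emph{Part (2).} Take $W=\Hom_\C(A,X)$ and the closed subvariety $\mathcal H$ of unital algebra maps. The group $X^\times$ of invertible elements of the algebra $\Hom_\C(\one,X)$ acts by inner conjugation. At $f$, the tangent space embeds in derivations $\mathrm{Der}(A,{}_fX_f)$, and the orbit tangent is the set of inner derivations. $\HH^1(A,{}_fX_f)=0$ gives openness of every orbit, hence finitely many orbits. If $X$ is connected, meaning $\Hom(\one,X)=\kk$, the group acts through scalars, which conjugate trivially, so each orbit is a point and $\mathcal H$ is finite.

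\emph{Part (3).} Fix one representative $B$ for each of the finitely many isomorphism classes of subobjects of $X$. A separable algebra subobject is then an algebra structure on some $B$ together with a monic algebra map $B\to X$, taken up to automorphisms of $B$. By (1) there are finitely many separable structures $A_1,\dots,A_r$ on each $B$. By (2), for each $A_i$ the maps $A_i\to X$ form finitely many inner-conjugacy orbits, and the monic ones form a union of some of these orbits. This gives finitely many inner-conjugacy classes. In the connected case, the finitely many maps give finitely many equivalence classes over $X$.

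The main obstacle is the tangent-space comparison: one has to verify carefully that the Zariski tangent space of the scheme defined by the associativity and unit equations is contained in the Hochschild cocycles in the category-internal sense. One also has to check that the orbit map's differential hits exactly the coboundaries, so that the cohomology vanishing yields $\dim T_x=\dim$ orbit. Smoothness of orbits of algebraic groups, which holds in any characteristic over algebraically closed $\kk$, then finishes the argument.
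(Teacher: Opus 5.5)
Your proposal is correct and follows the paper's proof essentially step for step. The paper also uses $\HH^2(A,A)=0$ to make separable $\Aut_{\C}(B)$-orbits open in the reduced variety $\Alg(B)$, and $\HH^1(A,{}_fX_f)=0$ to make $\Gamma(X)^\times$-orbits open in $\Hom_{\mathrm{alg}}(A,X)$. Finiteness then comes from the fact that each irreducible component meets at most one open orbit, and (3) is reduced to (1) and (2) by transporting each subobject onto a finite list of representative source algebras. The unit issue you flag is settled exactly as you suggest, at the infinitesimal level. Evaluating the linearized left-unit equation on $e$ gives $v=-\varphi\circ(e\otimes e)$. Since $(d^1h)\circ(e\otimes e)=h\circ e$, writing $\varphi=d^1h$ yields $h\circ e=-v$. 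Hence $(\varphi,v)$ is the orbit-differential image of $-h$.
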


The third assertion cleanly separates the local deformation argument from
the only global input needed to count all subalgebras of a fixed ambient
algebra: finiteness of the isomorphism classes of objects occurring as
subobjects.  This condition is automatic for a finite-length object in a
semisimple category in the sense of
Definition~\ref{def:semisimple-linear-category}, but it can also hold in
nonsemisimple situations, as Example~\ref{ex:strict-subobject} shows.  In
particular, the theorem applies to semisimple tensor categories in the sense
of \cite{EGNO}, including $\Rep(G)$ for a linearly reductive algebraic
group $G$ over $\kk$ (in particular, for a reductive group when
$\operatorname{char}\kk=0$).  Thus a finite-dimensional rational
$G$-algebra $X$ with
$X^G=\kk 1$ has only finitely many $G$-stable separable unital subalgebras;
this is Corollary~\ref{cor:semisimple} specialized to $\Rep(G)$.  The
corresponding fixed-module assertion follows from
Theorem~\ref{thm:fixed-object}.  The local orbit statements themselves
require only a Hom-finite $\kk$-linear monoidal category.

Finite-dimensional parameter spaces also yield explicit estimates.  For
an object $B$, set
\[
\begin{aligned}
 a_B&=\dim_{\kk}\Hom_{\C}(B\otimes B,B),&
 u_B&=\dim_{\kk}\Hom_{\C}(\one,B),\\
 t_B&=\dim_{\kk}\Hom_{\C}(B^{\otimes3},B),&
 E_B&=\dim_{\kk}\End_{\C}(B).
\end{aligned}
\]
After bases are chosen, associativity and unitality become polynomial
equations of total degree at most two in the coordinates of the
multiplication and unit.  Moreover, the change-of-coordinates orbit
representing each separable isomorphism class is open.  An affine B\'ezout
inequality therefore gives the following bound.

\begin{theorem}[Effective local rigidity]\label{thm:intro-effective}
The number of isomorphism classes of separable unital algebra structures on
$B$ is at most
\[
  2^{\min\{a_B+u_B,\,t_B+2E_B\}}.
\]
This is proved in Theorem~\ref{thm:fixed-object-bound}.
If $A$ is separable and $X$ is an algebra object, set
\[
\begin{aligned}
 b_{A,X}&=\dim_{\kk}\ker\bigl(\Hom_{\C}(A,X)\to
 \Hom_{\C}(\one,X),\ f\mapsto f\circ e_A\bigr),\\
 c_{A,X}&=\dim_{\kk}\Hom_{\C}(A\otimes A,X).
\end{aligned}
\]
Then the number of inner-conjugacy classes of unital algebra homomorphisms
$A\to X$ is at most $2^{\min\{b_{A,X},c_{A,X}\}}$.  For connected $X$,
this bounds the homomorphism set itself
\textup{(Theorem~\ref{thm:hom-bound})}.  The explicit resulting sums for
algebra subobjects are given in Theorem~\ref{thm:subobject-bound}, with a
bound depending only on the ambient algebra in
Corollary~\ref{cor:target-only-bound}.
\end{theorem}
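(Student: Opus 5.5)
The plan is to realize each count as the number of open orbits of an algebraic group acting on an affine variety cut out by equations of degree at most two. Then I apply an affine Bézout inequality: an affine variety in $\kk^n$ cut out by polynomials of degree $\le 2$ has at most $2^n$ irreducible components. Open orbits of a connected group are irreducible and open, and pairwise disjoint. Hence the closure of each one is an irreducible component, distinct orbits give distinct components, and the number of open orbits is at most $2^n$.

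\textbf{Algebra structures on $B$.} I take coordinates $(m,e)\in\Hom(B\otimes B,B)\times\Hom(\one,B)\cong\kk^{a_B+u_B}$. Associativity $m(m\otimes\id)=m(\id\otimes m)$ is quadratic, and unitality $m(e\otimes\id)=\id=m(\id\otimes e)$ is quadratic. The group $\Aut(B)$ is open in $\End(B)$, hence connected, and it acts by transport of structure; its orbits are exactly the isomorphism classes. The key step is openness of the orbit of a separable structure. The tangent map of the orbit map is the Hochschild coboundary $\End(B)\to$ (2-cochains, unit part), and the Zariski tangent space of the variety at $(m,e)$ lies inside the 2-cocycles together with the unit condition. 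By Proposition~\ref{prop:hochschild-vanishing}, $\HH^2(A,A)=0$, so the orbit map is surjective on tangent spaces into the Zariski tangent space. The standard argument then applies: $\dim\text{orbit}=\dim T\ge\dim_{(m,e)}V$, the point is smooth, and the orbit is open in $V$. I must also handle the unit coordinate carefully, using the unital normalized complex so that the unit variations are absorbed. This gives the bound $2^{a_B+u_B}$.

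For the second exponent, $t_B+2E_B$, I use the fact that a unit can be eliminated. I would instead parametrize by $(m,\ell,r)$, or by $m$ together with equations in $\Hom(B^{\otimes 3},B)$ and $\End(B)$. Bézout's count is governed by the number of equations rather than the number of variables: a variety cut out by $k$ quadrics has at most $2^k$ components, via the refined Bézout bound $\prod\deg$ over the defining equations. Associativity gives $t_B$ equations and the two unit laws give $2E_B$ equations, hence the bound $2^{t_B+2E_B}$. So I will use the form "at most $\prod_i\deg f_i$ components" of affine Bézout. This also yields the first bound, by counting variables once one notes that the number of components is $\le 2^{\min(\#\text{eq},\#\text{var})}$ after replacing the equations by $\le n$ generic linear combinations set-theoretically. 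I expect this version of the Bézout inequality, with set-theoretic reduction to at most $n$ generic combinations, to be the main technical point.

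\textbf{Homomorphisms $A\to X$.} The variety consists of $f\in\Hom(A,X)$ with $f\circ e_A=e_X$, which is an affine subspace of dimension $b_{A,X}$, subject to the quadratic equation $f\circ m_A=m_X(f\otimes f)$, which has $c_{A,X}$ components. The group of invertible elements of $\Hom(\one,X)$, which is open in a vector space and hence connected, acts by inner conjugation. Its tangent map is the inner derivation map. Since $\HH^1(A,{}_fX_f)=0$, every Zariski tangent vector, being a derivation, is inner, so the orbits are open and the same count gives $2^{\min\{b,c\}}$. When $X$ is connected, i.e.\ $\Hom(\one,X)=\kk$, the units are scalars, conjugation is trivial, and orbits are points, so the bound applies to the map set itself.
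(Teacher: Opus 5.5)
Your route is the paper's. You use the coordinates $(m,e)$, with $t_B$ associativity equations and $2E_B$ unit equations, all of degree at most two. Separable orbits are open because $\HH^2(A,A)=0$; for homomorphisms, the same holds on the unit slice of dimension $b_{A,X}$ cut out by $c_{A,X}$ quadrics, because $\HH^1(A,{}_fX_f)=0$. You then apply affine B\'ezout in $\min\{\#\text{variables},\#\text{equations}\}$ and note that each irreducible component carries at most one open orbit. The $(m,\ell,r)$ reparametrization is unnecessary.

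The unit coordinate, which you leave open, is handled in the paper as follows. Precompose the linearized left-unit identity with $e$. Since $(d^1h)\circ(e\otimes e)=h\circ e$ and $m\circ(v\otimes e)=v$, this gives $h\circ e=-v$, so $(\varphi,v)$ is the orbit differential at $-h$. Your normalized-complex variant also works. Subtract the orbit direction of $f_0=m\circ(v\otimes\id_B)$, which satisfies $f_0\circ e=v$. The remainder $(\varphi',0)$ has $\varphi'=d^1h$ with $h\circ e=\varphi'\circ(e\otimes e)=0$.

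The step that fails as written is the one you single out yourself: $n$ generic linear combinations of the equations need not cut out $V$ set-theoretically. In $\mathbb A^1$, the polynomials $x$ and $x^2-1$ have no common zero, yet every linear combination of them vanishes somewhere, so no single combination cuts out $V=\emptyset$.

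What is true, and enough, is that $n$ generic combinations $g_1,\dots,g_n$ cut out $V$ together with finitely many points outside $V$. A generic $g_k$ does not vanish identically on any component of $Z(g_1,\dots,g_{k-1})$ that is not contained in $V$. Hence, after $k$ steps, the components not contained in $V$ have dimension at most $n-k$. It follows that $\Irr(V)\subseteq\Irr(Z(g_1,\dots,g_n))$, so $\cdeg(V)\leq\cdeg(Z(g_1,\dots,g_n))\leq 2^n$.

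The paper avoids genericity altogether. For irreducible $W$ it proves $\cdeg(W\cap V)\leq\deg(W)\,d^{\dim W}$ by induction on $\dim W$, cutting $W\not\subseteq V$ by a single defining polynomial of $V$ that does not vanish on $W$, and then takes $W=\mathbb A^n$.
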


Frobenius self-duality permits a smaller parameter space: it records
both the source algebra and its embedding by an endomorphism of the
ambient Frobenius algebra.  The projection approach to intermediate
subfactors goes back to Bisch \cite{Bisch}, and its exchange-relation
characterization appears in \cite{BischJones}.
The categorical exchange-relations theorem of Ghosh--Palcoux \cite[Theorem~4.1]{GhoshPalcouxExchange} encodes a
Frobenius subalgebra by a self-dual unital idempotent satisfying two
quadratic equations.  Our additional observation is that the joint
deformation of a separable source algebra and its embedding has an open
orbit.  Its inverse image in the exchange locus makes each separable
inner-conjugacy class open, even though ambient inner conjugation need
not preserve the chosen Frobenius counit.  Counting these disjoint open
sets gives the following bound, with no hypothesis on the isomorphism
classes of subobjects of $X$.

\begin{theorem}[Quadratic exchange bound]\label{thm:intro-exchange}
Let $\C$ be a Hom-finite $\kk$-linear monoidal category and let $X\ne0$
be a Frobenius algebra in $\C$.  Put
\[
 r_X:=\dim_{\kk}\{h\in\End_{\C}(X):h^*=h,\ h\circ e_X=0\},
\]
where $*$ denotes the Frobenius dual.  The number of inner-conjugacy
classes of separable Frobenius subalgebras, with conjugacy referring to
the underlying algebra subobjects, is at most
\[
 2^{r_X}\leq2^{\dim_{\kk}\End_{\C}(X)-1}.
\]
If $X$ is connected, this bounds the number of equivalence classes over
$X$.  This is Theorem~\ref{thm:exchange-bound}.
\end{theorem}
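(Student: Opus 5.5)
Our approach is to realize the separable Frobenius subalgebras of $X$ as points of an affine variety cut out by quadratic equations in the affine space $V:=\{h\in\End_{\C}(X):h^*=h,\ h\circ e_X=0\}$, and then to apply the affine B\'ezout inequality used for Theorem~\ref{thm:intro-effective}. By the exchange-relations theorem of Ghosh--Palcoux \cite[Theorem~4.1]{GhoshPalcouxExchange}, a Frobenius subalgebra $(B,i)$ of $X$ corresponds to a self-dual unital idempotent $p\in\End_{\C}(X)$, namely $p=p^*$ and $p\circ e_X=e_X$, which satisfies two quadratic exchange relations. Writing $p=\pi+h$ for a fixed self-dual base point $\pi$ with $\pi\circ e_X=e_X$ (for instance $\pi=\id_X$, which is self-dual and unital), we identify the affine space of candidate $p$ with $V$. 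The idempotence equation and the two exchange equations are then polynomial equations of degree at most two in the coordinates of $h$ with respect to a basis of $V$. The exchange locus $\Exch\subset V$ is therefore an affine variety defined by quadrics in $r_X$ variables.

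Next we show that, inside $\Exch$, the set of points whose associated subalgebra lies in a fixed separable inner-conjugacy class is Zariski open. We use Proposition~\ref{prop:hochschild-vanishing}. The vanishing $\HH^2(B,B)=0$, via the open-orbit statement behind Theorem~\ref{thm:fixed-object}, shows that the separable source algebra cannot deform off its isomorphism class. The vanishing $\HH^1(B,{}_iX_i)=0$, via the statement behind Theorem~\ref{thm:homomorphisms}, shows that the embedding cannot deform off its inner-conjugacy orbit. Together these give that the joint (source, embedding) data form a variety on which the group $\Aut(B)\times X^\times$ acts with open orbits at separable points. We then pull this back along the morphism $\Exch\to\{\text{joint data}\}$ furnished by the exchange theorem (the idempotent $p$ determines $B$ as an image and determines $i$). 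The preimage of an open orbit is open in $\Exch$, and it is exactly the set of $p$ in that inner-conjugacy class. Ambient inner conjugation need not preserve the chosen counit and so need not act on $\Exch$ itself; for this reason we argue through preimages rather than through an orbit in $\Exch$.

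Distinct classes give disjoint nonempty open subsets of $\Exch$. Each such subset is therefore a union of irreducible components of $\Exch$, and in fact contains at least one full component. The number of classes is thus bounded by the number of irreducible components of $\Exch$. By the affine B\'ezout bound for varieties defined by polynomials of degree at most $2$ in $r_X$ variables, this number is at most $2^{r_X}$. The inequality $r_X\le\dim\End_{\C}(X)-1$ holds because $\id_X$ is self-dual with $\id_X\circ e_X=e_X\ne0$ (using $X\ne0$), so the linear condition $h\circ e_X=0$ cuts the self-dual subspace by at least one dimension. If $X$ is connected, the inner automorphisms act trivially on subobjects up to equivalence, exactly as in Theorem~\ref{thm:subobjects}, and so the classes coincide with equivalence classes over $X$.

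The main obstacle is the openness step. Concretely, we must check that the map from $\Exch$ to joint source and embedding data is a morphism of varieties near each separable point, or at least that the relevant class is locally given by it. Choosing the image object $B$ algebraically in $p$ in a merely Hom-finite, possibly non-idempotent-complete, category is delicate. We would first try to avoid choosing $B$. The plan is to fix one representative $(B,i_0)$ and consider the orbit map $X^\times\to\Exch$, $u\mapsto$ (the idempotent attached to $u\,i_0\,u^{-1}$ with its induced Frobenius structure). We would then show by a tangent-space comparison, using the $\HH^1$ and $\HH^2$ vanishing, that its image contains a Zariski neighbourhood of $p$ in $\Exch$.
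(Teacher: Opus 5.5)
\textbf{Verdict: genuine gap at the openness step.} Your overall architecture matches the paper's:
\begin{itemize}
\item $\Exch(X)$ is cut out by quadrics in the $r_X$-dimensional affine space $\mathscr A_X=\id_X+V$, and affine B\'ezout bounds its components;
\item each separable inner-conjugacy class is made open by pulling back an open $\Aut_{\C}(B)\times\Gamma(X)^\times$-orbit on the joint space $\mathsf P_B(X)$ of triples $(m,e,j)$ (Lemma~\ref{lem:joint-open-orbit}, which combines the $\HH^2$ and $\HH^1$ contractions at the tangent level);
\item $r_X\le\dim_{\kk}\End_{\C}(X)-1$ follows from $\id_X$.
\end{itemize}
The gap is the step you flag and leave open. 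The pull-back needs a \emph{regular} map from a Zariski neighbourhood of $b=i\circ i^*$ in $\Exch(X)$ into $\mathsf P_B(X)$, with the source object $B$ held fixed. Concretely, you need splittings $c=i_c\circ p_c$, $p_c\circ i_c=\id_B$, depending algebraically on the nearby exchange idempotent $c$. The exchange theorem only says each $c$ splits through \emph{some} object, up to a noncanonical isomorphism. So ``$p$ determines $B$ and $i$'' does not give a morphism of varieties.

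The missing ingredient is Kato's intertwiner (Lemma~\ref{lem:idempotent-chart}). Put $t_c=c\circ b+(\id_X-c)\circ(\id_X-b)$. Then $t_b=\id_X$ and $c\circ t_c=t_c\circ b$, and invertibility of $t_c$ is a Zariski-open condition. On that chart, $i_c=t_c\circ i$ and $p_c=p\circ t_c^{-1}$ are regular. Hence $(p_c\circ m_X\circ(i_c\otimes i_c),\,p_c\circ e_X,\,i_c)$ is the required regular map (Lemma~\ref{lem:exchange-open-classes}). This also settles your worry about non-Karoubian $\C$, since nearby idempotents are split through $B$ itself. The paper additionally reduces to the Karoubian case via the idempotent completion, into whose classes the original ones inject.

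\textbf{Your fallback does not close the gap.} As you observe, $c_u$ need not preserve $\epsilon_X$, so $u\mapsto$ ``the idempotent of $c_u\circ i_0$'' is not a map into $\Exch(X)$. The conjugate $c_u\circ b\circ c_u^{-1}$ need not be self-dual. Rebuilding a self-dual idempotent from an induced Frobenius structure needs nondegeneracy of the restricted pairing and $i^{**}=i$, neither of which you control. Even granting such a map with surjective differential at $e_X$, it is not the orbit map of a group acting on $\Exch(X)$. So Lemma~\ref{lem:open-orbit} does not apply, and you cannot translate a neighbourhood of $b$. Moreover, relating $T_b\Exch(X)$ to $\HH^1$ and $\HH^2$ already requires splitting nearby idempotents through $B$, which is exactly what the chart provides.

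\textbf{Smaller error in the counting step.} Your claim that each open class is a union of irreducible components is unjustified: open orbits need not be closed (compare the degeneration $x^2=tx$ in \S\ref{sec:scope}). The correct argument, used in the paper, is that an irreducible component meets at most one of the pairwise disjoint nonempty open classes. That still gives at most $\cdeg(\Exch(X))\le2^{r_X}$ classes.
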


We first apply the exchange bound to left coideal subalgebras.  The regular
comodule of a
finite-dimensional semisimple Hopf algebra is a connected Frobenius algebra,
and the Frobenius-subalgebra correspondence of Ghosh--Palcoux identifies its
Frobenius subalgebras with left coideal subalgebras
\cite[Theorem~11.11]{GhoshPalcouxLattices}.  After proving the required
categorical separability, we obtain an effective refinement of the theorem
of Etingof--Walton \cite[Theorem~3.6]{EtingofWalton}.

\begin{corollary}[Semisimple Hopf algebras]\label{cor:intro-hopf}
A semisimple Hopf algebra $H$ over $\CC$ with $\dim_{\CC}H=N$ has at most
$2^{N-1}$ left coideal subalgebras
\textup{(Corollary~\ref{cor:hopf-bound})}.
\end{corollary}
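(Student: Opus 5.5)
We will obtain the corollary by applying Theorem~\ref{thm:intro-exchange} in $\C=\Corep(H)$, the category of finite-dimensional left $H$-comodules over $\CC$, to the regular comodule $X=H$. The steps are as follows.

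\textbf{Step 1: $X=H$ is a connected Frobenius algebra in $\C$.} The multiplication $H\otimes H\to H$ and the unit $\CC\to H$ are comodule maps for the left regular coaction $\Delta$. Since $H$ is semisimple over $\CC$, the Larson--Radford theorem makes $H$ cosemisimple. It therefore has a two-sided integral $\lambda\in H^*$ with $\lambda(1)\neq0$, and $\lambda$ serves as a comodule-map counit. The pairing $(a,b)\mapsto\lambda(ab)$ is nondegenerate, which gives the Frobenius structure. Connectedness, $\Hom_{\C}(\one,H)=\CC1$, holds because the coinvariants of the regular coaction are the scalars.

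\textbf{Step 2: the Frobenius subalgebras are exactly the left coideal subalgebras, and each is separable.} We cite \cite[Theorem~11.11]{GhoshPalcouxLattices} for the identification of Frobenius subalgebras of $H$ with left coideal subalgebras. For separability in the categorical sense, let $K\subseteq H$ be a left coideal subalgebra. By Etingof--Walton (or Skryabin), $K$ is semisimple. We need a separability morphism $K\to K\otimes K$ that is a comodule map and a $K$-bimodule map. The plan is to restrict the Frobenius form $\lambda|_K$, which is nondegenerate because $K$ is itself Frobenius in $\C$. We take the comultiplication $\delta_K$ dual to the multiplication via this form, which is automatically a $K$-$K$-bimodule comodule map. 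We then check that $m\circ\delta_K$ is an invertible central comodule endomorphism of $K$. Since $K$ is connected, $m\circ\delta_K$ is multiplication by a scalar, equal to a suitable dimension, and this scalar is nonzero in characteristic zero. Rescaling $\delta_K$ by its inverse gives the separability morphism. This step is the main obstacle. If the scalar argument fails, the fallback is to average an ordinary separability idempotent of $K$ using the integral of $H^*$ so that it becomes colinear.

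\textbf{Step 3: counting.} Connectedness of $X$ means that distinct subalgebras are inequivalent over $X$. Theorem~\ref{thm:intro-exchange} then bounds the number of left coideal subalgebras by $2^{\dim\End_{\C}(H)-1}$. Finally, $\End_{\C}(H)$ consists of the left $H$-colinear endomorphisms of $H$, which are the right $H^*$-module endomorphisms of the regular $H^*$-module. This space is isomorphic to $H^*$ and has dimension $N$. The total bound is therefore $2^{N-1}$.
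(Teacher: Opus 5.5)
Your overall structure matches the paper's. You apply Theorem~\ref{thm:exchange-bound} to the connected Frobenius algebra $H$ in $\Corep(H)$, identify its Frobenius subalgebras with left coideal subalgebras via Theorem~\ref{thm:ghosh-palcoux-hopf}, and use $\dim_{\CC}\End_{\Corep(H)}(H)=N$.

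\textbf{The gap is in Step~2, at the point you flagged.} You correctly get $m_K\circ\delta_K=c\,\id_K$: it is a bimodule map, so it is multiplication by the central element $z=m_K\delta_K e_K$. That element is colinear, so it lies in $\Hom_{\Corep(H)}(\one,K)\hookrightarrow\Hom_{\Corep(H)}(\one,H)=\CC e_H$; this inclusion is also how connectedness of $K$ must be justified. But ``$c$ is a suitable dimension, nonzero in characteristic zero'' is not an argument. Applying the counit gives $c\,\lambda(1)=\sum_i\lambda(a_ib_i)$, where $\delta_K(1)=\sum_i a_i\otimes b_i$ and $\lambda(b_ia_j)=\delta_{ij}$. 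This sum is the trace of the Nakayama automorphism of $\lambda|_K$, not $\dim_{\CC}K$. For a nonsymmetric Frobenius form it can vanish even on a semisimple algebra: on $M_2(\CC)$ with $\phi(x)=x_{11}-x_{22}$ one gets $\sum_i a_ib_i=0$, so $m\circ\delta=0$.

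So you need an input that forces symmetry. One option: for semisimple $H$ over $\CC$, Larson--Radford gives $S^2=\id$, and the integral $\lambda$ of $H^*$ is proportional to the regular character $h\mapsto\mathrm{Tr}(L_h)$. Hence $\lambda|_K$ is a trace and $c=\dim_{\CC}K/\lambda(1)\neq0$. The paper instead uses positivity of $\Corep(H)$ and Lemma~\ref{lem:pivotal-frobenius-separable}. It inserts the pivotal automorphism $\alpha$ into $T=m\circ(\alpha\otimes\id)\circ\delta$, which makes $\epsilon\circ T\circ e$ the pivotal dimension, which is positive. Hence $c\neq0$, and $T=c\,\id=c\,\alpha$ forces $\alpha=\id$.

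\textbf{Your fallback has the same hole.} Averaging $p\mapsto\Lambda(p_{(-1)})p_{(0)}$ with the normalized integral $\Lambda$ of $H^*$ makes $p$ coinvariant. Here $p_{(-1)}\otimes p_{(0)}$ is the diagonal coaction on $K\otimes K$. Colinearity of $m$ also preserves $m(p)=1$. But $H^*$ acts on $K$ as a module algebra, not by automorphisms, so $kp=pk$ is not automatically preserved. Colinearity of the two outer actions together with the coideal property gives
\[
 k\,\Lambda(p_{(-1)})p_{(0)}
 =\Lambda\bigl(S(k_{(1)})\,p_{(-1)}\,k_{(2)}\bigr)\,p_{(0)}k_{(3)}.
\]
To reach $\Lambda(p_{(-1)})p_{(0)}k$ you need ad-invariance, $\Lambda\bigl(S(h_{(1)})xh_{(2)}\bigr)=\epsilon_H(h)\Lambda(x)$. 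That is again the trace property together with $S^2=\id$.

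Once this single ingredient is added, either of your routes gives a Hopf-algebraic proof of categorical separability. That differs from the paper's categorical lemma, which covers any connected Frobenius algebra of nonzero pivotal dimension. Your fallback also relies on ordinary semisimplicity of $K$ (Skryabin, Burciu), which the paper's argument does not need.
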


For a finite-index inclusion $N\subseteq M$ of $\mathrm{II}_1$ factors,
the exchange bound controls intermediate von Neumann algebras up to
conjugation by unitaries in $N'\cap M$.  Under irreducibility, these
classes are singletons, and the dimension comparison
$\dim_{\CC}(N'\cap M_1)\leq[M:N]$
in \eqref{eq:relative-commutant-index} gives the following consequence.

\begin{corollary}[Intermediate subfactors]\label{cor:intro-subfactor}
For every finite-index irreducible $\mathrm{II}_1$ subfactor $N\subset M$,
\[
 |\mathcal L(N\subset M)|
 \leq 2^{\dim_{\CC}(N'\cap M_1)-1}
 \leq 2^{\lfloor[M:N]\rfloor-1}.
\]
The estimate is proved in Theorem~\ref{thm:subfactor-bound}.
\end{corollary}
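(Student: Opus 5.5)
The plan is to obtain Corollary~\ref{cor:intro-subfactor} from the quadratic exchange bound, Theorem~\ref{thm:intro-exchange} (Theorem~\ref{thm:exchange-bound}), applied in the unitary setting of the subfactor.

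\emph{Step 1: the ambient Frobenius algebra.} Take $\C$ to be the category of $N$--$N$ bimodules generated by ${}_NL^2(M)_N$, which is Hom-finite because the index is finite. Let $X={}_NL^2(M)_N$ with its canonical Q-system, i.e.\ Frobenius algebra structure. Then $\End_{\C}(X)\cong N'\cap M_1$, so $\dim_{\kk}\End_{\C}(X)=\dim_{\CC}(N'\cap M_1)$. The unit space is $\Hom(\one,X)\cong N'\cap M$, which is $\CC$ by irreducibility; hence $X$ is connected.

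\emph{Step 2: intermediates are separable Frobenius subalgebras.} By the Bisch correspondence \cite{Bisch,BischJones}, in the categorical form of \cite[Theorem~4.1]{GhoshPalcouxExchange}, each intermediate subfactor $N\subseteq P\subseteq M$ corresponds to its Jones projection $e_P\in N'\cap M_1$. This projection is a self-dual unital idempotent satisfying the exchange relations. Equivalently, ${}_NL^2(P)_N$ is a Frobenius subalgebra of $X$, and it is separable because it is a Q-system in a unitary category. Distinct $P$ give distinct projections and hence distinct subobjects. Because $X$ is connected, Theorem~\ref{thm:intro-exchange} bounds the equivalence classes over $X$. Those classes coincide with the actual subalgebras: inner conjugation is by invertibles in $\Hom(\one,X)=\CC$, so it acts trivially. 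This is the main point to verify. I would check that an equivalence over $X$ between two such subobjects forces equality of the projections $e_P$. I would also check that nonunitary equivalences do not merge distinct intermediates; but they could only shrink the count, so the bound $|\mathcal L|\le 2^{r_X}\le 2^{\dim(N'\cap M_1)-1}$ survives in any case.

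\emph{Step 3: dimension bound.} It remains to prove \eqref{eq:relative-commutant-index}, namely $\dim_{\CC}(N'\cap M_1)\le[M:N]$. Decompose ${}_NL^2(M)_N=\bigoplus_i n_i\,\rho_i$. Then $\dim(N'\cap M_1)=\sum_i n_i^2$ and $[M:N]=\sum_i n_i d(\rho_i)$. Frobenius reciprocity together with irreducibility gives $n_i\le d(\rho_i)$, since $n_i=\dim\Hom(\rho_i,{}_NM_N)$ is bounded by the statistical dimension. Therefore $\sum n_i^2\le\sum n_i d(\rho_i)=[M:N]$.

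Since the left side is an integer, $\dim(N'\cap M_1)\le\lfloor[M:N]\rfloor$, and both inequalities follow. I expect the main obstacle to be the inequality $n_i\le d(\rho_i)$. Standard Pimsner--Popa or Frobenius-reciprocity arguments give it, so I would cite it rather than reprove it.
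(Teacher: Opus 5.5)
Your proposal is correct and follows the paper's proof: apply Theorem~\ref{thm:exchange-bound} to the connected Frobenius algebra $X={}_NM_N$ with $\End_{\C}(X)\cong N'\cap M_1$, then use $n_i\le d(\rho_i)$ and $d(X)=[M:N]$ (only $d(X)\le[M:N]$ is needed). The step $n_i\le d(\rho_i)$ rests, as in \eqref{eq:depth-three-defect}, on $n_i=\dim_{\CC}\Hom_{N\text{-}M}(\rho_i\otimes_N H,H)$ with $H={}_NL^2(M)_M$ simple, and not on any general bound of multiplicities by dimensions. Your fallback remark, however, runs backwards (if equivalences merged distinct intermediates, a bound on classes would not bound $|\mathcal L(N\subset M)|$), but it is unnecessary, since an equivalence $i=i'\circ h$ over $X$ preserves the image, so distinct intermediates give distinct points $e_P$ of $\Exch(X)$, each isolated by Corollary~\ref{lem:exchange-isolated}.
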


Qualitative finiteness was established by Watatani \cite{WatataniLattices}
and, for type~III subfactors, Teruya--Watatani \cite{TeruyaWatatani};
Longo \cite{Longo2003} also obtained explicit bounds.
The estimate above improves the whole-lattice bound $9^{[M:N]}$ of
Bakshi--Das--Liu--Ren \cite[Theorem~4.6]{BakshiDasLiuRen}.

\S{}\ref{sec:subfactors} also treats unital $C^*$-inclusions with a
conditional expectation of finite Watatani index and a finite-dimensional
center for the smaller algebra.  Analytic local unitary-conjugacy results
were obtained by Ino--Watatani \cite{InoWatatani} and Dickson \cite{Dickson}.
Gupta--Kumar \cite{GuptaKumar} proved qualitative finiteness for
$E$-compatible intermediates of irreducible finite-index unital inclusions
without simplicity assumptions.  In the simple case, quantitative bounds
were developed by Bakshi--Gupta \cite{BakshiGupta} and improved to $9^J$,
with $J$ the least Watatani index of a conditional expectation, by
Bakshi--Guin--Jana \cite{BakshiGuinJana}.
Our exchange-locus argument gives effective bounds up to relative-commutant
unitary conjugacy without irreducibility, and actual counts under
irreducibility.  The bounds also apply to compatible intermediate von
Neumann algebras under the same finite-index and finite-center hypotheses;
the separate subfactor statements use the Jones index
(Remark~\ref{rem:von-neumann-inclusions}).

The quantitative estimates imply qualitative finiteness.  Their common
input is the low-degree separability contraction: degree two gives open
orbits for algebra structures, while degree one gives open
inner-conjugacy orbits for homomorphisms.  Combining them for a varying
source and embedding proves openness of the separable inner-conjugacy
classes in the exchange locus.  In the connected case these classes are
single points.  Affine degree then bounds their number.  We retain the
qualitative statements to expose the local mechanism and its precise
hypotheses.

The paper is organized as follows.  \S{}\ref{sec:setting} fixes the
terminology\InternalRoadmapAddon, and \S{}\ref{sec:hoch} proves the
low-degree Hochschild contraction.  \S{}\ref{sec:geometry} records the
algebraic-geometric tools.  \S\S{}\ref{sec:structures}--\ref{sec:subobjects}
treat algebra structures, homomorphisms, and subobjects.
\S{}\ref{sec:frobenius} develops the exchange locus and the general
Frobenius bound.  Hopf algebras and finite groups are treated in
\S{}\ref{sec:hopf}, followed by unitary tensor categories and
operator-algebra inclusions in \S{}\ref{sec:subfactors}.  Examples appear in
\S{}\ref{sec:examples}, and the scope, limitations, open questions, and
comparison with the formal-angle method are discussed in
\S{}\ref{sec:scope}.
\section{Setting and terminology}\label{sec:setting}

\subsection{Categorical conventions}

Let $\kk$ be an algebraically closed field.  We use the terminology of
\cite{EGNO} whenever tensor categories are involved.  For the local
statements, however, $\C$ is only a $\kk$-linear monoidal category whose
composition and tensor product are $\kk$-bilinear and whose morphism spaces
are finite-dimensional.

We suppress associators and unitors, interpreting all tensor products and
composites by monoidal coherence.  No braiding is assumed, and the order of
the tensor factors is preserved throughout.

An \emph{algebra object} $A=(A,m_A,e_A)$ consists of morphisms
$m_A:A\otimes A\to A$ and $e_A:\one\to A$ such that
\[
 m_A\circ(m_A\otimes\id_A)=m_A\circ(\id_A\otimes m_A),
 \qquad
 m_A\circ(e_A\otimes\id_A)=\id_A
 =m_A\circ(\id_A\otimes e_A).
\]
A \emph{unital algebra homomorphism} $f:A\to X$ satisfies
\[
 f\circ e_A=e_X,
 \qquad
 f\circ m_A=m_X\circ(f\otimes f).
\]
An \emph{algebra monomorphism} is a unital algebra homomorphism whose
underlying morphism in $\C$ is a monomorphism, that is, is left-cancellable.
An \emph{algebra subobject} of $X$ is a pair $(A,i)$ consisting of an algebra
object $A$ and an algebra monomorphism $i:A\to X$.  The pairs $(A,i)$ and
$(A',i')$ are \emph{equivalent over $X$} if there is an algebra isomorphism
$h:A\to A'$ such that $i=i'\circ h$.

The algebra object $X$ is \emph{connected} if
$\dim_{\kk}\Hom_{\C}(\one,X)=1$.  Its unit morphism is nonzero: if $e_X=0$,
the left-unit identity gives $\id_X=0$, and then every morphism $\one\to X$
is zero.  Consequently $\Hom_{\C}(\one,X)=\kk e_X$.

\subsection{The morphism algebra and inner conjugacy}

The finite-dimensional vector space
$\Gamma(X):=\Hom_{\C}(\one,X)$ is an ordinary unital associative
$\kk$-algebra, with product
$u\star v:=m_X\circ(u\otimes v)$ and unit $e_X$.  Write
$\Gamma(X)^\times$ for its group of invertibles $u:\one\to X$, i.e. those
for which there is $u^{-1}:\one\to X$ satisfying
$u\star u^{-1}=e_X=u^{-1}\star u$.
For such $u$, define
\begin{equation}\label{eq:categorical-conjugation}
 c_u:=m_X\circ(m_X\otimes\id_X)
 \circ(u\otimes\id_X\otimes u^{-1}):X\longrightarrow X.
\end{equation}
Associativity, unitality, and the inverse identities give
\[
 c_u\circ e_X=e_X,\qquad
 c_u\circ m_X=m_X\circ(c_u\otimes c_u),\qquad
 c_{u^{-1}}\circ c_u=\id_X=c_u\circ c_{u^{-1}}.
\]
The same identities give
$c_{e_X}=\id_X$ and $c_{u\star v}=c_u\circ c_v$.
Thus $c_u$ is an algebra automorphism, and $\Gamma(X)^\times$ acts on
algebra homomorphisms $f:A\to X$ by
\begin{equation}\label{eq:conjugation-action}
 u\boldsymbol\cdot f:=c_u\circ f.
\end{equation}
We call the resulting equivalence relation \emph{inner conjugacy}.  If $X$
is connected, $\Gamma(X)=\kk e_X$, so the action is trivial.

Two algebra subobjects $(A,i)$ and $(A',i')$ are \emph{inner conjugate} if
there are $u\in\Gamma(X)^\times$ and an algebra isomorphism $h:A\to A'$ such
that $c_u\circ i=i'\circ h$.  For connected $X$, this is exactly equivalence
over $X$.

\subsection{Bimodules and separability}\label{subsec:bimodule-separability}

An $A$--$A$-bimodule is an object $M$ equipped with morphisms
$\ell_M:A\otimes M\to M$ and $r_M:M\otimes A\to M$ satisfying
\begin{align*}
 \ell_M\circ(m_A\otimes\id_M)
   &=\ell_M\circ(\id_A\otimes\ell_M),&
 \ell_M\circ(e_A\otimes\id_M)&=\id_M,\\
 r_M\circ(r_M\otimes\id_A)
   &=r_M\circ(\id_M\otimes m_A),&
 r_M\circ(\id_M\otimes e_A)&=\id_M,\\
 r_M\circ(\ell_M\otimes\id_A)
   &=\ell_M\circ(\id_A\otimes r_M).
\end{align*}
A bimodule morphism $q:M\to N$ satisfies
\[
 q\circ\ell_M=\ell_N\circ(\id_A\otimes q),
 \qquad
 q\circ r_M=r_N\circ(q\otimes\id_A).
\]
The regular bimodule on $A$ has both actions $m_A$.  On $A\otimes A$ we use
the outer regular actions $m_A\otimes\id_A$ and
$\id_A\otimes m_A$.

An algebra object $A$ is \emph{separable} if $m_A$ has an
$A$--$A$-bimodule section $s_A:A\to A\otimes A$.
The following equivalent formulation will be used in the Hochschild
contraction.

\begin{lemma}[Separability morphism]\label{lem:separability-morphism}
For an algebra object $A$, the following are equivalent.
\begin{enumerate}[label=\textup{(\roman*)}]
 \item There is an $A$--$A$-bimodule morphism
 $s_A:A\to A\otimes A$ such that $m_A\circ s_A=\id_A$.
 Its associated separability morphism is
 \begin{equation}\label{eq:separability-morphism}
  p_A:=s_A\circ e_A:\one\longrightarrow A\otimes A.
 \end{equation}
 \item There is a morphism $p_A:\one\to A\otimes A$ such that
 \begin{equation}\label{eq:separability-relations}
  m_A\circ p_A=e_A,
  \qquad
  (m_A\otimes\id_A)\circ(\id_A\otimes p_A)
  =(\id_A\otimes m_A)\circ(p_A\otimes\id_A).
 \end{equation}
\end{enumerate}
Conversely, the section corresponding to $p_A$ is
\begin{equation}\label{eq:s-from-p}
 s_A=(m_A\otimes\id_A)\circ(\id_A\otimes p_A)
     =(\id_A\otimes m_A)\circ(p_A\otimes\id_A).
\end{equation}
\end{lemma}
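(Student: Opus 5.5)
We prove both directions by string-diagram-free manipulation of composites, using only associativity, unitality and the bimodule axioms. Throughout, ``bimodule morphism'' for $s_A:A\to A\otimes A$ means
\[
 s_A\circ m_A=(m_A\otimes\id_A)\circ(\id_A\otimes s_A),\qquad
 s_A\circ m_A=(\id_A\otimes m_A)\circ(s_A\otimes\id_A),
\]
as maps $A\otimes A\to A\otimes A$. These are the left and right compatibilities for the regular bimodule on the source and the outer actions on the target.

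\emph{(i)$\Rightarrow$(ii).} We set $p_A=s_A\circ e_A$. Then $m_A\circ p_A=m_A\circ s_A\circ e_A=e_A$. For the second relation we precompose the left-linearity identity with $\id_A\otimes e_A$, using unitality $m_A\circ(\id_A\otimes e_A)=\id_A$. This gives
\[
 s_A=(m_A\otimes\id_A)\circ(\id_A\otimes p_A).
\]
Precomposing the right-linearity identity with $e_A\otimes\id_A$ gives
\[
 s_A=(\id_A\otimes m_A)\circ(p_A\otimes\id_A).
\]
Equating the two expressions yields the second relation in \eqref{eq:separability-relations}. It also establishes \eqref{eq:s-from-p}.

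\emph{(ii)$\Rightarrow$(i).} We define $s_A$ by \eqref{eq:s-from-p}; the two expressions agree by hypothesis. The argument has three steps.
\begin{enumerate}[label=(\alph*)]
 \item \emph{Section property.} We have $m_A\circ s_A=m_A\circ(m_A\otimes\id_A)\circ(\id_A\otimes p_A)$. By associativity this equals $m_A\circ(\id_A\otimes m_A)\circ(\id_A\otimes p_A)=m_A\circ(\id_A\otimes e_A)=\id_A$.
 \item \emph{Left linearity.} We use the first formula for $s_A$. By interchange,
 \[
  (m_A\otimes\id_A)\circ(\id_A\otimes s_A)=(m_A\otimes\id_A)\circ(\id_A\otimes m_A\otimes\id_A)\circ(\id_{A\otimes A}\otimes p_A).
 \]
 Associativity of $m_A$ turns this into $(m_A\otimes\id_A)\circ(m_A\otimes\id_{A\otimes A})\circ(\id_{A\otimes A}\otimes p_A)$. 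Moving $p_A$ past $m_A$ by interchange, this equals $(m_A\otimes\id_A)\circ(\id_A\otimes p_A)\circ m_A=s_A\circ m_A$.
 \item \emph{Right linearity.} This is the mirror argument using the second formula for $s_A$.
\end{enumerate}

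Finally, the two constructions are mutually inverse. Starting from $p_A$, we have $s_A\circ e_A=(m_A\otimes\id_A)\circ(e_A\otimes p_A)=p_A$ by the left unit law. Starting from $s_A$, the computation in (i)$\Rightarrow$(ii) shows that $s_A$ is recovered from $p_A$ by \eqref{eq:s-from-p}.

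The only delicate point is bookkeeping in step (b). There we must justify, via the interchange law in a strict (by coherence) monoidal category, sliding $p_A:\one\to A\otimes A$ past $m_A$ on the other tensor factors. This is routine once associators are suppressed, as the paper's conventions allow.
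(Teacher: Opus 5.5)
Your proof is correct and follows the paper's argument. For (i)$\Rightarrow$(ii) you precompose the two linearity identities with $\id_A\otimes e_A$ and $e_A\otimes\id_A$; for the converse you use associativity for both linearities and for $m_A\circ s_A=\id_A$, and the unit law for $s_A\circ e_A=p_A$, spelling out the interchange-law bookkeeping that the paper leaves implicit.
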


\begin{proof}
Given $s_A$, put $p_A=s_A\circ e_A$.  The section identity gives
$m_A\circ p_A=e_A$.  Precomposing the left and right $A$-linearity
identities for $s_A$ with $\id_A\otimes e_A$ and
$e_A\otimes\id_A$, respectively, shows that both composites in
\eqref{eq:separability-relations} equal $s_A$.

Conversely, let $p_A$ satisfy \eqref{eq:separability-relations} and define
$s_A$ by either of the equal composites in
\eqref{eq:s-from-p}.  Associativity proves left and right
$A$-linearity.  Moreover,
\[
 m_A\circ s_A
 =m_A\circ(\id_A\otimes m_A)\circ(\id_A\otimes p_A)
 =m_A\circ(\id_A\otimes e_A)=\id_A.
\]
Thus $s_A$ is a bimodule section; the unit identities also give
$s_A\circ e_A=p_A$.
\end{proof}

\subsection{Semisimple categories and the subobject hypothesis}

Several statements below do not require an abelian category, so we record
the precise additive terminology used in the paper.

\begin{definition}\label{def:semisimple-linear-category}
A $\kk$-linear category is \emph{Hom-finite} if every morphism space is
finite-dimensional, and \emph{Karoubian} if every idempotent splits.  An
additive $\kk$-linear category is \emph{Schur} if every indecomposable object
$S$ satisfies $\End_{\C}(S)=\kk$, and
$\Hom_{\C}(S,T)=0$ for nonisomorphic indecomposable objects $S,T$.  In this
paper, a \emph{semisimple $\kk$-linear category} means an additive,
Karoubian, Hom-finite Schur category in which every object is a finite direct
sum of indecomposable objects; these indecomposable objects are then called
simple.
\end{definition}

In this setting only finitely many isomorphism classes of underlying objects
can occur as subobjects of a fixed object $X$.  Indeed, decompose $X$ into
pairwise nonisomorphic simple objects with finite multiplicities.  A
monomorphism into $X$ induces an injection on each space
$\Hom_{\C}(S,-)$ with $S$ simple, so its source contains no additional
simple object and no larger multiplicity.  This assertion concerns the
isomorphism class of the source, not the equivalence class of its embedding
over $X$.

\section{Low-degree Hochschild contraction}\label{sec:hoch}

Throughout this section, $\C$ is a Hom-finite $\kk$-linear monoidal category;
no abelian or rigid structure is used.  Let $A$ be an algebra object and $M$
an $A$--$A$-bimodule.  Put
$C^n(A,M):=\Hom_{\C}(A^{\otimes n},M)$ for $n\geq0$, where
$A^{\otimes0}=\one$.  We write
$d^n:C^n(A,M)\to C^{n+1}(A,M)$ for the Hochschild differential in
degree $n$.
These cochain operators are distinct from the geometric differential
$\mathrm d$ of a regular map used later.
The three operators needed here are
\begin{align}
 d^0z
 &=\ell_M\circ(\id_A\otimes z)-r_M\circ(z\otimes\id_A),
 \label{eq:d0}\\
 d^1f
 &=\ell_M\circ(\id_A\otimes f)-f\circ m_A
   +r_M\circ(f\otimes\id_A),
 \label{eq:d1}\\
 d^2\varphi
 &=\ell_M\circ(\id_A\otimes\varphi)
   -\varphi\circ(m_A\otimes\id_A)
   +\varphi\circ(\id_A\otimes m_A)
   -r_M\circ(\varphi\otimes\id_A).
 \label{eq:d2}
\end{align}
The algebra and bimodule axioms give
$d^1\circ d^0=0$ and $d^2\circ d^1=0$.  For $n=1,2$, write
$Z^n(A,M):=\ker d^n$, $B^n(A,M):=\operatorname{im}d^{n-1}$, and
$\HH^n(A,M):=Z^n(A,M)/B^n(A,M)$.  Thus $Z^1(A,M)$ consists of derivations
and $B^1(A,M)$ of inner derivations; elements of $Z^2(A,M)$ are
$2$-cocycles and elements of $B^2(A,M)$ are $2$-coboundaries.  These are
kernels, images, and quotients of ordinary finite-dimensional
$\kk$-vector spaces, so their definition does not require kernels or
cokernels in $\C$.  In fact, the calculations in this section use neither
Hom-finiteness nor algebraic closedness; these assumptions enter the later
geometric arguments.

The contraction below is the categorical form of the standard separability
homotopy; compare \cite{AMS}.  Ostrik already outlines the
Hochschild-cohomological approach to algebra-object rigidity in
\cite{Ostrik}.  Here the required low-degree contraction
is an explicit identity of morphisms in a category that need not be abelian.

\begin{proposition}[Low-degree separability vanishing]
\label{prop:hochschild-vanishing}
If $A$ is separable, then for every $A$--$A$-bimodule $M$,
$\HH^1(A,M)=\HH^2(A,M)=0$.
\end{proposition}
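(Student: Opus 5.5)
We write the separability morphism as $p_A=\sum p'\otimes p''$ diagrammatically and build explicit contracting homotopies $h^1:C^1(A,M)\to C^0(A,M)$ and $h^2:C^2(A,M)\to C^1(A,M)$ by ``inserting $p_A$''. For $f\in C^1(A,M)=\Hom_{\C}(A,M)$ we set
\[
 h^1f:=r_M\circ(\ell_M\otimes\id_A)\circ(\id_A\otimes f\otimes\id_A)\circ(\id_A\otimes s_A)\circ\ldots
\]
More simply, in the classical notation $h^1f=\sum p'\,f(p'')$. Categorically this is
$h^1f:=\ell_M\circ(\id_A\otimes f)\circ p_A:\one\to M$. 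For $\varphi\in C^2(A,M)$ we set $h^2\varphi(a)=\sum p'\varphi(p'',a)$, that is,
\[
 h^2\varphi:=\ell_M\circ(\id_A\otimes\varphi)\circ(p_A\otimes\id_A):A\to M.
\]

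The key steps, in order, are as follows. First, for a derivation $f\in Z^1(A,M)$, we show $d^0(h^1f)=\pm f$ (up to sign conventions, possibly $-f$). We expand $d^0(h^1f)$ as $\ell_M\circ(\id_A\otimes\ell_M)\circ(\id_A\otimes\id_A\otimes f)\circ(\id_A\otimes p_A)-r_M\circ(\ell_M\otimes\id_A)\circ(\id_A\otimes f\otimes\id_A)\circ(p_A\otimes\id_A)$. In the second term we use the derivation identity $r_M\circ(f\otimes\id_A)=f\circ m_A-\ell_M\circ(\id_A\otimes f)$ together with the bimodule compatibility. This produces $\ell_M\circ(\id_A\otimes f)\circ s_A$, written via the right form of \eqref{eq:s-from-p}, minus a term $\ell_M\circ(\id_A\otimes\ell_M)\circ(\id_A\otimes\id_A\otimes f)\circ(p_A\otimes\id_A)$. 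We then apply the separability relation \eqref{eq:separability-relations}, $(m_A\otimes\id_A)(\id_A\otimes p_A)=(\id_A\otimes m_A)(p_A\otimes\id_A)$, after rewriting $\ell_M(\id\otimes\ell_M)=\ell_M(m_A\otimes\id)$. This shows that the first term of $d^0h^1f$ cancels against the term coming from $\ell_M\circ(\id\otimes f)\circ(\id\otimes m_A)\circ(p_A\otimes \id)$. What survives is $\ell_M\circ(\id_A\otimes f)\circ(\ldots)$ collapsed using $m_A\circ p_A=e_A$: one writes $f=f\circ m_A\circ(\id_A\otimes e_A)$ and uses $m_A\circ s_A=\id_A$. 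The outcome is $f=d^0(-h^1f)$ or similar, so $Z^1=B^1$.

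Second, for $\varphi\in Z^2(A,M)$ we show $\varphi=d^1(\pm h^2\varphi)$. Expanding $d^1h^2\varphi$ gives three terms. We postcompose the cocycle identity $d^2\varphi=0$ with the insertion of $p_A$ in the first slot, i.e.\ apply $\ell_M\circ(\id_A\otimes-)\circ(p_A\otimes\id_A\otimes\id_A)$ to $d^2\varphi$. This yields four terms. One is $\ell_M(\id\otimes\ell_M)(\id\otimes\id\otimes\varphi)(p_A\otimes\ldots)=\ell_M(m_A\otimes\varphi)(p_A\otimes\ldots)$, which by $m_A\circ p_A=e_A$ becomes $\varphi$. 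The remaining three terms match, via the separability relation (moving $m_A$ across $p_A$), exactly the three terms of $d^1h^2\varphi$. Hence $\varphi=\pm d^1(h^2\varphi)$.

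All manipulations use only associativity, the bimodule axioms, the two relations \eqref{eq:separability-relations}, and the interchange law of the monoidal structure, so no abelian structure is needed. The main obstacle is bookkeeping: one must check that the term $-\varphi\circ(m_A\otimes\id_A)$ after insertion of $p_A$ really equals the $\ell_M\circ(\id\otimes h^2\varphi)$-type term. This is exactly where the bimodule (rather than merely one-sided) nature of $p_A$ is needed. I would first verify it in the classical case $\C=\Vect$ with $p=\sum p'\otimes p''$, where the identity $\sum ap'\otimes p''=\sum p'\otimes p''a$ makes it transparent, and then transcribe each step as a string-diagram identity.
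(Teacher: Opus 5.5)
Your proposal is correct and is essentially the paper's argument. Your $h^1,h^2$ are exactly the paper's contractions $H_1,H_2$, and your two steps are the paper's homotopy identities $d^0\circ H_1+H_2\circ d^1=\id$ and $d^1\circ H_2+H_3\circ d^2=\id$ evaluated on cocycles; the paper proves these on all cochains. The signs come out positive, $d^0(h^1f)=f$ and $d^1(h^2\varphi)=\varphi$. The surviving term in your first step collapses to $f$ by left-module associativity, $m_A\circ p_A=e_A$, and left unitality alone, so the appeal to $m_A\circ s_A=\id_A$ there is unnecessary.
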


\begin{proof}
Choose a separability morphism $p_A:\one\to A\otimes A$ as in
Lemma~\ref{lem:separability-morphism}.  For $n\geq1$ define
\begin{equation}\label{eq:categorical-contraction}
 H_n:C^n(A,M)\longrightarrow C^{n-1}(A,M),\qquad
 H_n(\psi):=\ell_M\circ(\id_A\otimes\psi)
 \circ(p_A\otimes\id_A^{\otimes(n-1)}).
\end{equation}
In particular,
\begin{align}
 H_1(f)&=\ell_M\circ(\id_A\otimes f)\circ p_A,
 \label{eq:h1-contraction}\\
 H_2(\varphi)&=\ell_M\circ(\id_A\otimes\varphi)
 \circ(p_A\otimes\id_A).
 \label{eq:h2-contraction}
\end{align}
To verify the contraction identities, put
\[
 L_p:=(m_A\otimes\id_A)\circ(\id_A\otimes p_A),\qquad
 R_p:=(\id_A\otimes m_A)\circ(p_A\otimes\id_A).
\]
By Lemma~\ref{lem:separability-morphism}, $L_p=R_p=s_A$.

\medskip
\noindent\textbf{Contraction lemma.}
For $f\in C^1(A,M)$ and $\varphi\in C^2(A,M)$, one has
\begin{align}
 d^0(H_1(f))
 &=\ell_M\circ(\id_A\otimes f)\circ L_p
   -r_M\circ(H_1(f)\otimes\id_A),\label{eq:h1-first}\\
 H_2(d^1f)
 &=f-\ell_M\circ(\id_A\otimes f)\circ R_p
   +r_M\circ(H_1(f)\otimes\id_A),\label{eq:h1-second}\\
 d^1(H_2(\varphi))
 &=\ell_M\circ(\id_A\otimes\varphi)
     \circ(L_p\otimes\id_A)
   -H_2(\varphi)\circ m_A
   +r_M\circ(H_2(\varphi)\otimes\id_A),\label{eq:h2-first}\\
 H_3(d^2\varphi)
 &=\varphi-\ell_M\circ(\id_A\otimes\varphi)
     \circ(R_p\otimes\id_A)
   +H_2(\varphi)\circ m_A
   -r_M\circ(H_2(\varphi)\otimes\id_A).
 \label{eq:h2-second}
\end{align}
For the first and third identities, substitute the definitions of $d^0$
and $d^1$ and use left-module associativity to expose $L_p$ in the
left-action term.  In the second and fourth identities, expand
$H_2(d^1f)$ and $H_3(d^2\varphi)$, respectively.  Left-module
associativity, $m_A\circ p_A=e_A$, and left unitality turn the first
summand into $f$ or $\varphi$.  The middle summands follow by
functoriality of the tensor product and the definition of $R_p$;
compatibility of the two actions moves the final right action outside
$H_n$.  Thus these four expansions use normalization and the bimodule
axioms.  Centrality, namely $L_p=R_p$, identifies the terms that cancel
when the equations are added.

Adding \eqref{eq:h1-first} to \eqref{eq:h1-second}, and
\eqref{eq:h2-first} to \eqref{eq:h2-second}, gives
\begin{align}
 d^0\circ H_1+H_2\circ d^1&=\id_{C^1(A,M)},
 \label{eq:homotopy-one}\\
 d^1\circ H_2+H_3\circ d^2&=\id_{C^2(A,M)}.
 \label{eq:homotopy-two}
\end{align}
Consequently $d^1f=0$ implies $f=d^0(H_1(f))$, while
$d^2\varphi=0$ implies $\varphi=d^1(H_2(\varphi))$.  Hence both
cohomology groups vanish.
\end{proof}

\section{Affine geometry and degree}\label{sec:geometry}

This section recalls the elementary algebraic geometry used below; see
\cite[Chapters~1--4 and 9]{CoxLittleOShea} for the affine notions and
\cite[Proposition~1.8]{Borel} for algebraic-group orbits in arbitrary
characteristic.  An affine variety over
$\kk$ means here a reduced common zero locus in some affine space
$\mathbb A^n_{\kk}$; it need not be irreducible.  Its Zariski-closed subsets
are the common zero loci of polynomial families.  A nonempty space is
\emph{irreducible} if it is not the union of two proper closed subsets, and
the irreducible components of a variety are its maximal irreducible closed
subsets; there are finitely many of them.  We denote their set by
$\Irr(V)$.

A \emph{quasi-affine variety} is an open subvariety of an affine variety,
or equivalently a locally closed subvariety of an affine space.  The
\emph{dimension} of a nonempty (quasi-)affine variety $V$ is the largest
integer $r$ for which there is a chain
\[
 Z_0\subsetneq Z_1\subsetneq\cdots\subsetneq Z_r\subseteq V
\]
of nonempty irreducible closed subsets of $V$.  Its \emph{local dimension}
at $v\in V$ is
\[
 \dim_vV:=\min_{U\ni v}\dim U
   =\max\{\dim C:C\in\Irr(V),\ v\in C\},
\]
where $U$ ranges over open neighborhoods of $v$ in $V$; equivalently,
$\dim_vV$ is the Krull dimension of the local ring $\mathcal O_{V,v}$.
For affine $V$, $\dim V$ is the Krull dimension of $\kk[V]$; see
\cite[AG, \S\S{}1.4, 3.8, and 9.1]{Borel}.

For affine $V$, if the radical ideal $I(V)\subseteq\kk[x_1,\ldots,x_n]$ is generated by
$q_1,\ldots,q_s$ and $v\in V$, its Zariski tangent space is
\[
 T_vV=\{w\in\kk^n:Dq_j(v)w=0\text{ for }1\leq j\leq s\},
 \qquad
 Dq(v)w:=\sum_{i=1}^n\frac{\partial q}{\partial x_i}(v)w_i.
\]
Here $Dq(v)$ is the differential of the polynomial $q$ at $v$.
Let $R=\kk[\varepsilon]/(\varepsilon^2)$, where $\varepsilon$ is a formal
symbol with $\varepsilon^2=0$.  Every element of $R$ is uniquely
$a+\varepsilon b$, and
\[
 q(v+\varepsilon w)=q(v)+\varepsilon Dq(v)w.
\]
Thus a tangent vector is a first-order lift $v+\varepsilon w$ satisfying
all equations in $I(V)$ over $R$.  Terms containing two or more copies of
$\varepsilon$ vanish; this is the linearization used below.
The definition is independent of the chosen generators of $I(V)$.
For a quasi-affine variety, the tangent space is computed in any affine
open neighborhood of the point.
Linearizing equations defining $V$ only as a zero set always gives
necessary conditions on tangent vectors, but these need not be sufficient.
The local dimension satisfies $\dim_vV\leq\dim_{\kk}T_vV$, with equality
precisely when $v$ is smooth.

Algebraic groups are understood here as reduced algebraic varieties with
regular multiplication and inversion; over $\kk$ they are smooth
\cite[Proposition~1.2]{Borel}.  All groups used below are affine.
An orbit in an affine variety is smooth and locally closed
\cite[Proposition~1.8]{Borel}; with its reduced induced structure it is
therefore a quasi-affine variety.

\begin{lemma}[Surjective orbit differential]\label{lem:open-orbit}
Let an algebraic group $G$ over $\kk$ act algebraically on an affine variety
$V$, let $v\in V$, and let
$\theta_v:G\to V$, $g\mapsto g\boldsymbol\cdot v$, be the orbit map.  If
$(\mathrm d\theta_v)_{1_G}:T_{1_G}G\to T_vV$ is surjective, then
$G\boldsymbol\cdot v$ is open in $V$.  Consequently $V$ has only finitely
many orbits satisfying this differential-surjectivity condition.
\end{lemma}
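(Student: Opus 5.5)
The plan is to show that the orbit $O:=G\boldsymbol\cdot v$ contains a Zariski-open neighbourhood of $v$ in $V$, and then use homogeneity to conclude that $O$ is open. First I will reduce to the case where $G$ is connected. The identity component $G^\circ$ has finite index, is smooth, and has the same tangent space at $1_G$. So the hypothesis also holds for $G^\circ$. Moreover $G\boldsymbol\cdot v$ is a finite union of translates $g\boldsymbol\cdot(G^\circ\boldsymbol\cdot v)$, and each translate is open because $g$ acts by an automorphism of the variety $V$. The closure $Z:=\overline{G^\circ\boldsymbol\cdot v}$ is then irreducible, being the closure of the image of an irreducible variety.

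The key step is a dimension count at $v$. The orbit $O^\circ:=G^\circ\boldsymbol\cdot v$ is smooth and locally closed (\cite[Proposition~1.8]{Borel}), and $\theta_v$ factors as $G^\circ\to O^\circ\hookrightarrow V$. Hence the image of $(\mathrm d\theta_v)_{1_G}$ lies in $T_vO^\circ\subseteq T_vV$, and surjectivity forces $T_vO^\circ=T_vV$. Since $O^\circ$ is smooth,
\[
\dim O^\circ=\dim_{\kk}T_vO^\circ=\dim_{\kk}T_vV\ \geq\ \dim_vV,
\]
using the inequality $\dim_vV\leq\dim_{\kk}T_vV$ recalled above. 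Every irreducible component $C$ of $V$ through $v$ has $\dim C\leq\dim_vV\leq\dim O^\circ=\dim Z$. Since $Z$ is irreducible, it lies in some component $C_0\ni v$, and then $\dim Z\leq\dim C_0$ forces $Z=C_0$. If another component $C_1\ni v$ existed, it would have dimension at most $\dim Z$ and could not be contained in $Z$.

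This is the main obstacle: excluding a second component through $v$. I would handle it with the tangent-space equality. Equality $\dim_vV=\dim T_vV$ means $v$ is a smooth point of $V$, so the local ring $\mathcal O_{V,v}$ is regular, hence a domain, and therefore only one component passes through $v$. Consequently there is an open neighbourhood $U$ of $v$ in $V$ with $U\subseteq Z$. Since $O^\circ$ is open in its closure $Z$, the set $U\cap O^\circ$ is open in $V$ and contains $v$. Translating by $G^\circ$ shows that $O^\circ=\bigcup_g g\boldsymbol\cdot(U\cap O^\circ)$ is open in $V$.

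For the finiteness assertion, I argue as follows. Distinct orbits are disjoint, and every orbit satisfying the differential-surjectivity condition is open and nonempty. Such an orbit is dense in a component, by the argument above equal to $Z=C_0$, so it contains a dense open subset of exactly one irreducible component of $V$. Two disjoint nonempty open subsets cannot both meet the same irreducible component densely. So the assignment from these orbits to components is injective, and the number of such orbits is at most $|\Irr(V)|<\infty$.
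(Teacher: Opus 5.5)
Your proof is correct and takes essentially the same route as the paper's: pass to $G^\circ$, get $T_vO^\circ=T_vV$, force equality in $\dim Z\le\dim_vV\le\dim_{\kk}T_vV$ so that $v$ is a smooth point lying only on the component $Z$, translate the resulting neighbourhood by $G^\circ$, and finish with the same component-counting argument. One small imprecision: for disconnected $G$ the orbit $G\boldsymbol\cdot v$ can be dense in several components $g\boldsymbol\cdot Z$, so ``exactly one'' should read ``at least one''; your injectivity argument only needs one chosen component per orbit, so the conclusion is unaffected.
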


\begin{proof}
Let $G^\circ$ be the identity component, put
$O=G^\circ\boldsymbol\cdot v$, and let $Z=\overline O$.
The subgroup $G^\circ$ is open and irreducible; see
\cite[Proposition~1.2, pp.~46--47]{Borel}.
Restricting the orbit map to $G^\circ$ therefore leaves its differential
at $1_G$ unchanged.  Since the restricted map factors through $O$,
\[
 \operatorname{im}(\mathrm d\theta_v)_{1_G}
 \subseteq T_vO\subseteq T_vV.
\]
Surjectivity gives $T_vO=T_vV$.  The orbit $O$ is smooth and open in the
irreducible variety $Z$ by \cite[Proposition~1.8, p.~53]{Borel}.  Hence
\[
 \dim Z=\dim O=\dim_{\kk}T_vO=\dim_{\kk}T_vV.
\]
Because $Z\subseteq V$ passes through $v$, we also have
\[
 \dim Z\leq\dim_vV\leq\dim_{\kk}T_vV,
\]
where the second inequality is
\cite[AG, \S{}3.9 and Theorem~17.1]{Borel}.
Equality follows throughout.  Thus $v$ is a smooth point of $V$ and lies
on a unique irreducible component $C$; see
\cite[AG, Theorem~17.1, p.~40]{Borel}.
Since $Z\subseteq C$ and $\dim Z=\dim C$, we have $Z=C$.
Therefore $O$ is open in $C$.

Removing from $V$ the union of the irreducible components other than $C$
gives an open neighborhood of $v$ contained in $C$.  Intersecting it with
$O$ shows that $O$ contains an open neighborhood of $v$ in $V$; translating
this neighborhood by $G^\circ$ proves that $O$ is open in $V$.  Hence
$G\boldsymbol\cdot v=\bigcup_{g\in G}gO$ is open as well.

Finally, $V$ has finitely many irreducible components, and an irreducible
component meets at most one open orbit because any two nonempty open
subsets of an irreducible variety intersect.  Thus only finitely many
open orbits occur.
\end{proof}

\begin{lemma}[Zero tangent spaces imply finiteness]\label{lem:zero-tangent}
If $V$ is an affine variety and $T_vV=0$ for every $v\in V$, then $V$ is a
finite set.
\end{lemma}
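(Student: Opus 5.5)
The argument goes through local dimension.  Since $T_vV=0$, the inequality $\dim_vV\leq\dim_{\kk}T_vV$ recalled above gives $\dim_vV=0$ at every point $v\in V$.  By the description of local dimension as $\max\{\dim C:C\in\Irr(V),\ v\in C\}$, every irreducible component of $V$ containing $v$ then has dimension zero.  Every irreducible component $C$ of $V$ is nonempty and contains some point $v$, so every $C\in\Irr(V)$ has dimension zero.

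The key step is to show that an irreducible affine variety $C$ of dimension zero is a single point.  Let $x\in C$.  The singleton $\{x\}$ is closed in the Zariski topology, being the common zero locus of the linear polynomials $x_i-a_i$, and it is irreducible.  If $C\neq\{x\}$, then $\{x\}\subsetneq C$ is a chain of nonempty irreducible closed subsets of length one, which forces $\dim C\geq1$.  This contradicts $\dim C=0$, so $C=\{x\}$.

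Since $V$ is the union of its finitely many irreducible components, it follows that $V$ is a finite set.  The empty variety satisfies the hypothesis vacuously and is finite, so it needs no separate treatment.

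The only step that needs care is the passage from $T_vV=0$ to the vanishing of the dimension of every component through $v$.  Here I rely on the inequality $\dim_vV\leq\dim_{\kk}T_vV$ for reduced varieties over $\kk$, which the paper quotes from \cite[AG, \S{}3.9 and Theorem~17.1]{Borel}.  With that inequality granted, the remaining steps are purely topological.
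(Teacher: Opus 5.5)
Your proof is correct and follows the paper's route: both use $\dim_vV\leq\dim_{\kk}T_vV$ to conclude that $V$ is zero-dimensional. The only difference is the last step, where the paper argues algebraically (the coordinate ring is then Artinian, reduced and finitely generated, hence a finite product of copies of $\kk$), whereas you argue topologically (there are finitely many irreducible components, and each zero-dimensional one is a single point because points are closed) — equally valid and slightly more elementary.
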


\begin{proof}
The inequality $\dim_vV\leq\dim_{\kk}T_vV$ makes $V$ zero-dimensional.  Its
coordinate ring is therefore an Artinian reduced finitely generated
$\kk$-algebra, hence a finite product of copies of $\kk$.  Thus $V$ has
finitely many points.
\end{proof}

\subsection*{B\'ezout inequality}

For an irreducible projective variety $Y\subseteq\mathbb P^n_{\kk}$ of
dimension $r$, its \emph{algebraic degree} $\deg(Y)$ is the length of its
intersection with a general linear subspace of codimension $r$, counting
intersection multiplicities.  Equivalently, the leading term of its
Hilbert polynomial is $\deg(Y)t^r/r!$.  The degree of an irreducible
affine variety is the degree of its projective closure.

For a reduced affine algebraic set $V\subseteq\mathbb A^n_{\kk}$, define its
\emph{cumulative degree} by
\begin{equation}\label{eq:cumulative-degree}
 \cdeg(V):=\sum_{Z\in\Irr(V)}\deg(\overline Z),
\end{equation}
where $\overline Z\subseteq\mathbb P^n_{\kk}$ is the projective closure of
$Z$.  This version of degree records components of every dimension.  In
particular,
\begin{equation}\label{eq:components-degree}
 |\Irr(V)|\leq\cdeg(V),
\end{equation}
since every irreducible component has positive integral degree.

The classical projective B\'ezout equality says that, when
pure-dimensional projective varieties $Y,H\subseteq\mathbb P^n$ satisfy
$\dim Y+\dim H\geq n$ and meet properly,
\[
 \deg(Y)\deg(H)=\sum_Z i(Z;Y,H)\deg(Z),
\]
where $Z$ runs over the irreducible components of $Y\cap H$ and
$i(Z;Y,H)$ is the positive intersection multiplicity
\cite[Theorem~18.4]{Harris}.  In particular, if $H$ is a hypersurface of
degree $e$ not containing the irreducible variety $Y$ with $\dim Y\geq1$,
this sum equals
$e\deg(Y)$.  Discarding multiplicities and components at infinity gives
the affine inequality used below.

We use the following standard affine form of B\'ezout's theorem
(cf.~\cite{Heintz}).

\begin{theorem}[Affine B\'ezout inequality]\label{thm:affine-bezout}
Let $V\subseteq\mathbb A^n_{\kk}$ be the reduced common zero locus of $s$
polynomials of degree at most $d$, where $d\geq1$.  Then
\begin{equation}\label{eq:affine-bezout}
 \cdeg(V)\leq d^{\min\{n,s\}}.
\end{equation}
Consequently, $V$ has at most $d^{\min\{n,s\}}$ irreducible components.
\end{theorem}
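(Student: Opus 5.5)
The plan is to reduce to the projective Bézout inequality by homogenizing and cutting with one hypersurface at a time, tracking the cumulative degree through an induction on the number of equations. Write $V=Z(q_1,\ldots,q_s)\subseteq\mathbb A^n_{\kk}$ with $\deg q_j\leq d$. Set $V_0=\mathbb A^n_{\kk}$ and $V_j=(V_{j-1}\cap Z(q_j))_{\mathrm{red}}$, so $V=V_s$ and $\cdeg(V_0)=1$.

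\textbf{Inductive step.} The key claim is that for any reduced affine algebraic set $W$ and any polynomial $q$ with $\deg q\leq d$,
\[
 \cdeg(W\cap Z(q))\leq d\,\cdeg(W).
\]
To prove it, fix an irreducible component $Z$ of $W$. If $q$ vanishes on $Z$, then $Z\cap Z(q)=Z$ contributes $\deg\overline Z\leq d\deg\overline Z$, using $d\geq1$. Otherwise let $Q$ be the homogenization of $q$. Then $H=Z(Q)\subseteq\mathbb P^n$ is a hypersurface of degree at most $d$ not containing $\overline Z$. If $\dim Z=0$, the intersection $Z\cap Z(q)$ is empty. If $\dim Z\geq1$, the projective Bézout equality quoted above gives
\[
 \sum_{Y}\deg(Y)\leq\deg(\overline Z)\deg(Q)\leq d\deg\overline Z,
\]
where $Y$ runs over the components of $\overline Z\cap H$. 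The projective closures of the components of $Z\cap Z(q)$ are among these $Y$; those lying at infinity are discarded, and multiplicities are at least $1$. If $q$ is constant and nonzero, the intersection is empty; degenerate cases where $\deg Q=0$ can be handled by replacing $d$ with $\max(\deg q,1)$. Finally, every irreducible component of $W\cap Z(q)$ is a component of some $Z\cap Z(q)$, so summing over $Z$ proves the claim. Iterating the claim gives $\cdeg(V)\leq d^s$.

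\textbf{Reducing to $n$ equations.} The remaining task is the bound $d^n$ when $s>n$, and this is the main obstacle. The naive iteration only gives $d^s$; the problem is that cutting a component by a hypersurface that contains it does not lower its dimension but still costs nothing, whereas the bound must use at most $n$ cuts that actually reduce dimension. I would handle this with the standard refined Bézout of Heintz/Fulton. Along the chain $V_0\supseteq V_1\supseteq\cdots$, a component either survives unchanged (factor $1$) or is cut properly, which strictly lowers dimension (factor at most $d$). Starting from $\mathbb A^n$, any component can undergo at most $n$ proper cuts, since its dimension drops from $n$ to $0$.

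\textbf{Bookkeeping.} I would track, for each component $C$ of $V_j$, a weight $w(C)=\deg(\overline C)\,d^{\dim C-n}$. The inequality $\deg(\overline C)\leq d^{\,n-\dim C}$ alone is not enough, because it controls single components rather than sums. The right potential is
\[
 \Phi(W)=\sum_{C\in\Irr(W)}\deg(\overline C)\,d^{\dim C}.
\]
By the step above, $\Phi$ is non-increasing along the chain: an unchanged component keeps its term, and a properly cut component of dimension $k$ is replaced by components of dimension $k-1$ whose degrees sum to at most $d\deg\overline C$, which contributes at most $d\deg(\overline C)\,d^{k-1}$. Hence $\cdeg(V)\leq\Phi(V)\leq\Phi(\mathbb A^n)=d^n$. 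Combining this with the first bound gives $\cdeg(V)\leq d^{\min\{n,s\}}$, and \eqref{eq:components-degree} yields the statement about irreducible components.
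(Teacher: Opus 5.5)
Your proof is correct. The $d^s$ half coincides with the paper's. Hypersurface B\'ezout gives $\cdeg(W\cap Z(q))\leq d\,\cdeg(W)$, components on which $q$ vanishes are kept at no cost since $d\geq1$, and one iterates.

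For the $d^n$ half, the bookkeeping differs. The paper proves, by induction on $r=\dim W$, that $\cdeg(W\cap V)\leq\deg(W)\,d^{r}$ for every irreducible affine $W$. At each step it chooses adaptively some defining polynomial of $V$ that does not vanish on $W$, cuts once, and recurses on the components of the cut, using that they have dimension $r-1$. Finally it takes $W=\mathbb A^n$.

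You instead stay on the fixed-order chain $V_0\supseteq V_1\supseteq\cdots\supseteq V_s$ already used for the $d^s$ bound. You show that $\Phi(W)=\sum_{C\in\Irr(W)}\deg(\overline C)\,d^{\dim C}$ never increases along it. Idle cuts cost nothing, and a proper cut trades a factor $d$ in degree for a drop in dimension. For that drop you only need the trivial inequality $\dim Y\leq\dim C-1$ for a proper closed subset $Y$ of the irreducible $C$, not the exact dimension count.

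Your route gives both bounds from a single filtration. It also yields a slightly stronger conclusion, $\sum_{C\in\Irr(V)}\deg(\overline C)\,d^{\dim C}\leq d^n$, so components of dimension $k$ have total degree at most $d^{\,n-k}$. Run from an arbitrary starting set, it gives $\Phi(W\cap V)\leq\Phi(W)$, which contains the paper's inductive statement. The paper's induction avoids introducing a weighted invariant and is the more familiar Heintz-style formulation; the underlying mechanism is the same.

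Your side remarks about the weight $w(C)$ and the appeal to a refined B\'ezout theorem are unnecessary. The potential argument is self-contained, given hypersurface B\'ezout and the subadditivity over components that you already noted in the first step.
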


\begin{proof}
Projective hypersurface B\'ezout, applied to the projective closures of
the irreducible components, gives
\begin{equation}\label{eq:bezout-step}
 \cdeg\bigl(W\cap Z(g)\bigr)\leq d\,\cdeg(W)
\end{equation}
for every reduced affine algebraic set $W$ and polynomial $g$ of degree
at most $d$.  A component contained in $Z(g)$ is retained with its
original degree, which also satisfies this estimate because $d\geq1$.
Iteration over the $s$ defining equations yields $\cdeg(V)\leq d^s$.

For the bound in terms of $n$, we prove by induction on $r=\dim W$ that
\[
 \cdeg(W\cap V)\leq\deg(W)d^r
\]
for every irreducible affine variety $W$.  The claim is immediate when
$r=0$ or $W\subseteq V$.  Otherwise, some defining polynomial $g$ of $V$
does not vanish identically on $W$.  Every irreducible component $Y$ of
$W\cap Z(g)$ has dimension $r-1$, and hypersurface B\'ezout gives
$\sum_Y\deg(Y)\leq d\deg(W)$.  Since
$W\cap V=\bigcup_Y(Y\cap V)$, induction gives
\[
 \cdeg(W\cap V)
 \leq\sum_Y\cdeg(Y\cap V)
 \leq d^{r-1}\sum_Y\deg(Y)
 \leq d^r\deg(W).
\]
Apply this with $W=\mathbb A^n$, of degree one, and combine the resulting
$d^n$ bound with the $d^s$ bound.
\end{proof}

We record the following result, which will be used later.

\Needspace{7\baselineskip}
\begin{lemma}[Counting open orbits and isolated points]\label{lem:degree-counting}
Let $V$ be a reduced affine algebraic set.
\begin{enumerate}[label=\textup{(\roman*)}]
 \item If an algebraic group acts on $V$, the number of open orbits is at
 most $\cdeg(V)$.
 \item The number of isolated points of $V$ is at most $\cdeg(V)$.
\end{enumerate}
\end{lemma}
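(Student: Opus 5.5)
Both parts reduce to injecting the objects being counted into $\Irr(V)$ and then applying \eqref{eq:components-degree}, which gives $|\Irr(V)|\leq\cdeg(V)$. In each case the plan is to attach to every counted object a distinct irreducible component.

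For (i), let $O$ be an open orbit. I will choose an irreducible component $C_O\in\Irr(V)$ that meets $O$; one exists because $O$ is nonempty and $V$ is the finite union of its components. Then $O\cap C_O$ is a nonempty open subset of the irreducible space $C_O$, and so it is dense in $C_O$.

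The claim to prove is that $O\mapsto C_O$ is injective. Suppose $C_O=C_{O'}$ for two open orbits. Then $O\cap C_O$ and $O'\cap C_O$ are two nonempty open subsets of one irreducible space, so they intersect. Distinct orbits are disjoint, hence $O=O'$. This is the same argument as the last paragraph of Lemma~\ref{lem:open-orbit}, now made quantitative. Note that the map $O\mapsto C_O$ depends on a choice, but every choice is injective.

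For (ii), let $v$ be an isolated point. Then $\{v\}$ is open in $V$, and it is also closed since points are closed. The set $\{v\}$ is irreducible. Moreover, any irreducible closed set $Z\ni v$ satisfies $Z=\{v\}\cup(Z\setminus\{v\})$, a union of two closed sets, and irreducibility of $Z$ forces $Z=\{v\}$. Therefore $\{v\}$ is a maximal irreducible closed subset, that is, a component. Distinct isolated points give distinct components, and \eqref{eq:components-degree} finishes the argument.

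I expect no serious obstacle. The only care needed is in (i), where the component is chosen to meet $O$ rather than to be contained in $O$. The argument never uses that $O\cap C$ is open in $V$, only that it is open in $C$, and this holds because $O$ is open in $V$ and $C$ carries the subspace topology. The group structure enters only through the fact that distinct orbits are disjoint, so (i) in fact holds for any family of pairwise disjoint nonempty open subsets of $V$.
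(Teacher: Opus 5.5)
Your proposal is correct and follows the paper's proof. In (i), each irreducible component meets at most one open orbit, because nonempty open subsets of an irreducible space intersect while distinct orbits are disjoint; in (ii), each isolated point is itself an irreducible component; so both counts are bounded by $|\Irr(V)|\leq\cdeg(V)$, and your remark that (i) uses only pairwise disjointness of nonempty open sets is the form the paper later invokes in the proof of Theorem~\ref{thm:exchange-bound}.
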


\begin{proof}
An irreducible component meets at most one open orbit: two nonempty open
subsets of an irreducible space meet, whereas distinct orbits do not.  This
proves \textup{(i)} by \eqref{eq:components-degree}.  An isolated point is a
zero-dimensional irreducible component, of degree one, which proves
\textup{(ii)}.
\end{proof}

\section{Algebra structures on a fixed object}\label{sec:structures}

Throughout this section, $\C$ is a Hom-finite $\kk$-linear monoidal
category, where $\kk$ is algebraically closed, and $B$ is a fixed object in
$\C$.  Put $\mathsf M_B:=\Hom_{\C}(B\otimes B,B)$ and
$\mathsf U_B:=\Hom_{\C}(\one,B)$.  Both are finite-dimensional
$\kk$-vector spaces.  Let $\Alg(B)$ be the reduced
common zero locus in $\mathsf M_B\oplus\mathsf U_B$ of the polynomial maps
encoded by
\begin{align}
  m\circ(m\otimes\id_B)&=m\circ(\id_B\otimes m),\label{eq:alg-assoc}\\
  m\circ(e\otimes\id_B)&=\id_B
  =m\circ(\id_B\otimes e).\label{eq:alg-unit}
\end{align}
Thus $\Alg(B)$ is an affine variety whose points are precisely the unital
algebra structures on the fixed object $B$.  Here ``reduced'' means that the
ideal generated by the coordinate equations is replaced by its radical; this
does not change the common zero set over the algebraically closed field
$\kk$.

The group
\[
  G_B:=\Aut_{\C}(B)=\End_{\C}(B)^\times
\]
is an algebraic group over $\kk$: it is the unit group of a
finite-dimensional $\kk$-algebra, hence a smooth Zariski-open subset of
$\End_{\C}(B)$, with $T_{\id_B}G_B=\End_{\C}(B)$.  It acts by transport of structure,
\begin{equation}\label{eq:transport-structure}
  g\boldsymbol\cdot(m,e)
  =\bigl(g\circ m\circ(g^{-1}\otimes g^{-1}),\,g\circ e\bigr).
\end{equation}
Its orbits are exactly the isomorphism classes of unital algebra structures
on the object $B$.

\subsection{The tangent calculation with a varying unit}

Fix $(m,e)\in\Alg(B)$ and write $A=(B,m,e)$.  Every Zariski tangent
vector is represented by a pair
$(\varphi,v)\in\mathsf M_B\oplus\mathsf U_B$.  Because $\Alg(B)$ is the
\emph{reduced} zero locus of the structure equations, such a tangent vector
necessarily satisfies their linearizations; we do not need the converse.
Linearized associativity is
\begin{equation}\label{eq:linearized-assoc}
  \varphi\circ(m\otimes\id_B)+m\circ(\varphi\otimes\id_B)
  =
  \varphi\circ(\id_B\otimes m)+m\circ(\id_B\otimes\varphi),
\end{equation}
which is precisely $d^2\varphi=0$ for the regular $A$-bimodule.  The two
linearized unit identities are
\begin{align}
  \varphi\circ(e\otimes\id_B)+m\circ(v\otimes\id_B)&=0,
  \label{eq:linearized-left-unit}\\
  \varphi\circ(\id_B\otimes e)+m\circ(\id_B\otimes v)&=0.
  \label{eq:linearized-right-unit}
\end{align}

Assume now that $A$ is separable.  By
Proposition~\ref{prop:hochschild-vanishing}, there is a morphism $h:B\to B$
with $\varphi=d^1h$.
Precomposing \eqref{eq:linearized-left-unit} with
$e:\one\to B$ gives
\begin{equation}\label{eq:h-on-unit}
  \varphi\circ(e\otimes e)+m\circ(v\otimes e)=0.
\end{equation}
The right-unit identity gives $m\circ(v\otimes e)=v$.  Moreover,
\eqref{eq:d1} for the regular bimodule, precomposed with
$e\otimes e$, gives
\[
  (d^1h)\circ(e\otimes e)=h\circ e.
\]
Hence
\begin{equation}\label{eq:h-unit-v}
  h\circ e=-v.
\end{equation}

Let
\[
  \theta_{(m,e)}:G_B\longrightarrow\mathsf M_B\oplus\mathsf U_B,
  \qquad
  g\longmapsto g\boldsymbol\cdot(m,e)
\]
be the orbit map.  Its ordinary differential at $\id_B$ is
\begin{equation}\label{eq:orbit-differential-alg}
  (\mathrm d\theta_{(m,e)})_{\id_B}(f)
  =\bigl(f\circ m-m\circ(f\otimes\id_B)-m\circ(\id_B\otimes f),\,
  f\circ e\bigr)
  =\bigl(-d^1f,\,f\circ e\bigr).
\end{equation}
This follows by differentiating multiplication and inversion in the
finite-dimensional algebra $\End_{\C}(B)$; it does not require a scalar
extension of the object $B$.  Taking $f=-h$, equations
\eqref{eq:h-unit-v} and $\varphi=d^1h$ give
\[
  (\mathrm d\theta_{(m,e)})_{\id_B}(-h)=(\varphi,v).
\]
Thus every tangent vector to $\Alg(B)$ lies in the image of the orbit-map
differential.  The reverse inclusion is automatic because the orbit map
takes values in $\Alg(B)$.  We have proved
\begin{equation}\label{eq:full-tangent-alg}
  \operatorname{im}(\mathrm d\theta_{(m,e)})_{\id_B}
  =T_{(m,e)}\Alg(B)
\end{equation}
at every separable point.

\begin{theorem}[Fixed-object rigidity]\label{thm:fixed-object}
Let $B$ be any object of a Hom-finite $\kk$-linear monoidal category.  The
object $B$ supports only finitely many isomorphism classes of separable
unital algebra structures.
\end{theorem}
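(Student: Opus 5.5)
The plan is to combine the tangent computation \eqref{eq:full-tangent-alg} with Lemma~\ref{lem:open-orbit}. Take the affine variety $V=\Alg(B)\subseteq\mathsf M_B\oplus\mathsf U_B$, acted on algebraically by $G_B=\Aut_{\C}(B)$ through the transport of structure \eqref{eq:transport-structure}. This action is algebraic, since $g\mapsto g^{-1}$ is regular on the open subset $\End_{\C}(B)^\times$ and composition and tensor product are bilinear. The orbits of $G_B$ on $\Alg(B)$ are exactly the isomorphism classes of unital algebra structures on $B$. So it suffices to show that only finitely many orbits consist of separable structures.

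First, separability is an isomorphism invariant. If $g\colon(B,m,e)\to(B,m',e')$ is an algebra isomorphism and $p$ is a separability morphism for $(m,e)$, then $(g\otimes g)\circ p$ satisfies \eqref{eq:separability-relations} for $(m',e')$. Hence each orbit is either entirely separable or contains no separable point.

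Second, fix a separable point $v=(m,e)$. The computation preceding the theorem, namely \eqref{eq:full-tangent-alg}, shows that $(\mathrm d\theta_v)_{\id_B}\colon\End_{\C}(B)=T_{\id_B}G_B\to T_v\Alg(B)$ is surjective. That computation relies on Proposition~\ref{prop:hochschild-vanishing} to write $\varphi=d^1h$, and on the unit equations to match the $\mathsf U_B$-component. One point needs care: $T_v\Alg(B)$ is the Zariski tangent space of the \emph{reduced} variety. It therefore lies inside the solution space of the linearized equations \eqref{eq:linearized-assoc}--\eqref{eq:linearized-right-unit}, and that containment is the only direction the argument uses. Lemma~\ref{lem:open-orbit} then says that $G_B\boldsymbol\cdot v$ is open in $\Alg(B)$.

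Finally, the last clause of Lemma~\ref{lem:open-orbit} shows that $\Alg(B)$ has only finitely many open orbits. Each irreducible component meets at most one open orbit, and there are finitely many components. Every separable isomorphism class is one of these open orbits, which gives finiteness.

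The main obstacle is the legitimacy of the orbit-map differential: the image of $(\mathrm d\theta_v)_{\id_B}$ must be computed as a map into the tangent space of the variety $\Alg(B)$, not merely into the ambient space $\mathsf M_B\oplus\mathsf U_B$. Since $\theta_v$ takes values in $\Alg(B)$, its differential lands in $T_v\Alg(B)$. Combined with the containment from the previous step, the two spaces coincide, and this equality is exactly the hypothesis Lemma~\ref{lem:open-orbit} requires. Everything else is bookkeeping.
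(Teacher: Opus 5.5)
Your proposal is correct and follows the paper's proof: \eqref{eq:full-tangent-alg} together with Lemma~\ref{lem:open-orbit} makes the orbit of every separable point open, and the last clause of that lemma shows there are only finitely many open orbits. Your extra remark that separability is an isomorphism invariant is correct but not needed, since each separable isomorphism class is already the orbit of some separable point.
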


\begin{proof}
Equation~\eqref{eq:full-tangent-alg} and
Lemma~\ref{lem:open-orbit} show that the $G_B$-orbit of every separable point
of $\Alg(B)$ is open.  The same lemma shows that $\Alg(B)$ has only finitely
many open $G_B$-orbits.
\end{proof}

In the characteristic-zero fusion setting, Ostrik's reconstruction theorem
\cite[Theorem~3.1]{Ostrik} relates algebra objects to module categories;
compare rigidity and finiteness of semisimple module categories with a
prescribed number of simple isomorphism classes in \cite[Corollary~2.35 and \S{}7]{ENO}.  Theorem~\ref{thm:fixed-object}
fixes the underlying object and counts algebra structures up to algebra
isomorphism, rather than module categories up to equivalence.

\subsection{Effective bounds}

For later use, set
\begin{equation}\label{eq:fixed-object-data}
\begin{aligned}
 a_B&:=\dim_{\kk}\Hom_{\C}(B\otimes B,B),
 &u_B&:=\dim_{\kk}\Hom_{\C}(\one,B),\\
 t_B&:=\dim_{\kk}\Hom_{\C}(B^{\otimes3},B),
 &E_B&:=\dim_{\kk}\End_{\C}(B).
\end{aligned}
\end{equation}

\Needspace{7\baselineskip}
\begin{theorem}[Fixed-object bound]\label{thm:fixed-object-bound}
The number of isomorphism classes of separable unital algebra structures on
$B$ is at most
\begin{equation}\label{eq:fixed-object-bound}
 2^{\min\{a_B+u_B,\,t_B+2E_B\}}.
\end{equation}
\end{theorem}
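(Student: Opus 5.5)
The bound follows from three earlier results. By Theorem~\ref{thm:fixed-object} and its proof, each isomorphism class of separable structures is an open $G_B$-orbit in $\Alg(B)$. Lemma~\ref{lem:degree-counting}(i) then bounds the number of such orbits by $\cdeg(\Alg(B))$. Theorem~\ref{thm:affine-bezout} with $d=2$ gives $\cdeg(\Alg(B))\le 2^{\min\{n,s\}}$, where $n$ is the number of affine coordinates and $s$ the number of defining equations. It therefore suffices to present $\Alg(B)$ as the reduced zero locus of polynomials of degree at most two, with ambient dimension $n=a_B+u_B$ and with at most $s=t_B+2E_B$ equations.

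For the ambient dimension, I will choose bases of $\mathsf M_B$ and $\mathsf U_B$, so that $\Alg(B)\subseteq\mathsf M_B\oplus\mathsf U_B\cong\mathbb A^{a_B+u_B}$. This gives $n=a_B+u_B$.

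For the equations, I use bilinearity of composition and tensor product. The associativity condition \eqref{eq:alg-assoc} is an equality in $\Hom_{\C}(B^{\otimes3},B)$. Its difference is a $\Hom_{\C}(B^{\otimes3},B)$-valued map that is homogeneous quadratic in $m$. Taking coordinates with respect to a basis of this space gives $t_B$ scalar equations of degree two. Each unit identity in \eqref{eq:alg-unit} is an equality in $\End_{\C}(B)$ of the form $m\circ(e\otimes\id_B)-\id_B=0$. This is bilinear in $(m,e)$ plus a constant, so each identity gives $E_B$ equations of degree at most two. In total there are $s=t_B+2E_B$ polynomials of degree at most $2$. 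Since $\Alg(B)$ is by definition the reduced common zero locus of exactly these coordinate equations, and replacing the ideal by its radical does not change the zero set, Theorem~\ref{thm:affine-bezout} applies with $d=2$. Combining the three results gives the stated bound.

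The only delicate point I expect is justifying that the separable isomorphism classes are counted by open orbits in this particular algebraic set. This requires that the tangent spaces in Lemma~\ref{lem:open-orbit} are those of the reduced variety, which is exactly the setting of \eqref{eq:full-tangent-alg}. Lemma~\ref{lem:degree-counting}(i) only needs an algebraic group action on a reduced affine set, so no further obstacle arises. Equations that happen to vanish identically, or constant equations, are harmless: they may be discarded, which only lowers $s$, or they make $\Alg(B)$ empty. I will also remark that $d=2\ge1$ is needed in Theorem~\ref{thm:affine-bezout}, and that it holds here even when some equations have degree less than two.
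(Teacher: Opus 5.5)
Your proposal is correct and follows essentially the same route as the paper's proof. The ingredients match: the ambient space $\mathsf M_B\oplus\mathsf U_B$ of dimension $a_B+u_B$; the $t_B$ quadratic associativity equations and $2E_B$ unit equations of degree at most two, fed into Theorem~\ref{thm:affine-bezout} with $d=2$; openness of separable orbits from \eqref{eq:full-tangent-alg} and Lemma~\ref{lem:open-orbit}; and the final count via Lemma~\ref{lem:degree-counting}\textup{(i)}. Your remarks on degenerate (constant or identically zero) equations and on using tangent spaces of the reduced variety are sound, and they only make explicit what the paper leaves implicit.
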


\begin{proof}
The variety $\Alg(B)$ lies in an affine space of dimension $a_B+u_B$.
Associativity contributes at most $t_B$ scalar quadratic equations, while
the two unit identities contribute at most $2E_B$ scalar equations of
degree at most two.  Hence Theorem~\ref{thm:affine-bezout} gives
\[
 \cdeg\bigl(\Alg(B)\bigr)
 \leq 2^{\min\{a_B+u_B,t_B+2E_B\}}.
\]
For every separable point, equation~\eqref{eq:full-tangent-alg} says that the
orbit-map differential is surjective; Lemma~\ref{lem:open-orbit} therefore
makes that orbit open.  Finally,
Lemma~\ref{lem:degree-counting}\textup{(i)} proves the claim.
\end{proof}

Fix now a nonzero morphism $e:\one\to B$.  Let $\Alg_e(B)$ be the
fixed-unit slice and let
\[
 G_{B,e}:=\{g\in\Aut_{\C}(B):g\circ e=e\}
\]
be its stabilizer.

The group $G_{B,e}$ is the intersection of the open set $G_B$ with the
affine linear space
\[
 \id_B+\{h\in\End_{\C}(B):h\circ e=0\}.
\]
It is therefore smooth, with
$T_{\id_B}G_{B,e}=\{h\in\End_{\C}(B):h\circ e=0\}$.

\begin{proposition}[Fixed-unit rigidity]\label{prop:fixed-unit-rigidity}
Every separable point of $\Alg_e(B)$ has an open $G_{B,e}$-orbit.
\end{proposition}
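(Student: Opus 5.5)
The plan is to repeat the tangent-space argument behind \eqref{eq:full-tangent-alg}, now with the unit held fixed, and then apply Lemma~\ref{lem:open-orbit} to the action of $G_{B,e}$ on $\Alg_e(B)$. Here $\Alg_e(B)$ is the reduced zero locus in $\mathsf M_B$ of associativity \eqref{eq:alg-assoc} together with the unit identities \eqref{eq:alg-unit} at the fixed $e$. The group $G_{B,e}$ preserves this locus, since by \eqref{eq:transport-structure} $g\boldsymbol\cdot(m,e)$ has unit $g\circ e=e$. The action is algebraic, and $G_{B,e}$ is smooth with the tangent space at $\id_B$ recorded just before the statement.

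Fix a separable point $m$ and put $A=(B,m,e)$. A Zariski tangent vector $\varphi\in\mathsf M_B$ satisfies the linearizations of the defining equations. Linearized associativity gives $d^2\varphi=0$, and the linearized unit identities are \eqref{eq:linearized-left-unit} and \eqref{eq:linearized-right-unit} with $v=0$, namely $\varphi\circ(e\otimes\id_B)=0=\varphi\circ(\id_B\otimes e)$. By Proposition~\ref{prop:hochschild-vanishing}, $\varphi=d^1h$ for some $h\in\End_{\C}(B)$. Precomposing with $e\otimes e$ as in the derivation of \eqref{eq:h-unit-v} gives $h\circ e=-v=0$. Hence $h$ lies in $T_{\id_B}G_{B,e}=\{h:h\circ e=0\}$.

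The orbit map of $G_{B,e}$ is the restriction of $\theta_{(m,e)}$, with the $\mathsf U_B$-coordinate forgotten. By \eqref{eq:orbit-differential-alg}, its differential at $\id_B$ sends $f$ to $-d^1f$, so $f=-h$ maps to $\varphi$. The differential is therefore surjective onto $T_m\Alg_e(B)$, and Lemma~\ref{lem:open-orbit} makes the orbit open.

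The main obstacle, mild here, is checking that the earlier unit computation survives in the slice. Specifically, one must confirm that the tangent vector has no unit component, that the primitive $h$ supplied by the contraction automatically kills $e$, and hence that it lies in the smaller Lie algebra rather than merely in $\End_{\C}(B)$. The identity $(d^1h)\circ(e\otimes e)=h\circ e$ handles this directly. The hypothesis $e\neq0$ plays no role in the argument beyond ensuring that the slice is meaningful.
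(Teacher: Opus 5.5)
Your proposal is correct and follows the paper's proof essentially verbatim. Tangent vectors to the slice have vanishing unit component, and the degree-two contraction gives $\varphi=d^1h$. Evaluating on $e\otimes e$ then forces $h\circ e=0$, so $-h$ lies in $T_{\id_B}G_{B,e}$, and \eqref{eq:orbit-differential-alg} together with Lemma~\ref{lem:open-orbit} finishes the argument.
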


\begin{proof}
A tangent vector in the fixed-unit slice has the form $(\varphi,0)$.  The
calculation preceding \eqref{eq:h-unit-v} gives $\varphi=d^1h$ and
$h\circ e=0$.  Thus $-h$ belongs to the Lie algebra of $G_{B,e}$, and
\eqref{eq:orbit-differential-alg} shows that $(\varphi,0)$ lies in the image
of the stabilizer orbit-map differential.  That differential is therefore
surjective, so Lemma~\ref{lem:open-orbit} applies.
\end{proof}

Consider the linear map
\begin{equation}\label{eq:lambda-fixed-unit}
 \Lambda_{B,e}:\Hom_{\C}(B\otimes B,B)\longrightarrow\End_{\C}(B)^2,
 \qquad
 m\longmapsto\bigl(m\circ(e\otimes\id_B),
 m\circ(\id_B\otimes e)\bigr),
\end{equation}
and put
\begin{equation}\label{eq:q-fixed-unit}
 q_{B,e}:=\dim_{\kk}\ker\Lambda_{B,e}.
\end{equation}
When nonempty, the unit equations cut out an affine space of this dimension.

\begin{theorem}[Fixed-unit bound]\label{thm:fixed-unit-bound}
The number of separable algebra structures on $B$ having unit exactly
$e$, modulo algebra isomorphisms fixing $e$, is at most
\begin{equation}\label{eq:fixed-unit-bound}
 2^{\min\{q_{B,e},t_B\}}.
\end{equation}
If $e$ has a retraction $\pi:B\to\one$, then
\begin{equation}\label{eq:q-exact}
 q_{B,e}=a_B-2E_B+u_B.
\end{equation}
\end{theorem}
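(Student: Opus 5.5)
The plan has two parts: the counting bound for the fixed-unit slice, and the dimension formula for $q_{B,e}$ when $e$ has a retraction.

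\textbf{The bound.} The unit equations \eqref{eq:alg-unit} with $e$ fixed are linear in $m$. So, when $\Alg_e(B)$ is nonempty, these equations cut out an affine translate $m_0+\ker\Lambda_{B,e}$ of dimension $q_{B,e}$. Choose affine coordinates on this translate. Associativity \eqref{eq:alg-assoc} then restricts to at most $t_B$ scalar equations of degree at most two in $q_{B,e}$ variables. Thus $\Alg_e(B)$ is a reduced affine algebraic set in $\mathbb A^{q_{B,e}}$, and Theorem~\ref{thm:affine-bezout} with $d=2$ gives
\[
 \cdeg\bigl(\Alg_e(B)\bigr)\leq2^{\min\{q_{B,e},t_B\}}.
\]
The group $G_{B,e}$ acts algebraically on this slice by \eqref{eq:transport-structure}, and its orbits are exactly the classes modulo algebra isomorphisms fixing $e$. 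By Proposition~\ref{prop:fixed-unit-rigidity}, every separable point has an open orbit. Lemma~\ref{lem:degree-counting}\textup{(i)} then bounds the number of such orbits by the cumulative degree. If the slice is empty there is nothing to count. One point to check: the tangent space used in Proposition~\ref{prop:fixed-unit-rigidity} is the tangent space of the reduced slice inside the affine space $\mathsf M_B$ with $e$ fixed. Its vectors satisfy the linearized equations, which is all that proof needs.

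\textbf{The formula.} We have $q_{B,e}=a_B-\operatorname{rank}\Lambda_{B,e}$, so it suffices to show
\[
 \operatorname{rank}\Lambda_{B,e}=2E_B-u_B.
\]
The image lies in the subspace $W=\{(x,y)\in\End_{\C}(B)^2:x\circ e=y\circ e\}$, because both components evaluate to $m\circ(e\otimes e)$. The linear map $(x,y)\mapsto x\circ e-y\circ e$ from $\End_{\C}(B)^2$ to $\mathsf U_B$ is surjective, since $\pi$ gives $x=w\circ\pi$ with $x\circ e=w$. Hence $\dim W=2E_B-u_B$.

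For the reverse inclusion $W\subseteq\operatorname{im}\Lambda_{B,e}$, I will construct an explicit preimage, and this is the main step. Given $(x,y)\in W$, put
\[
 m=x\circ(\id_B\otimes\pi)+y\circ(\pi\otimes\id_B)-(x\circ e)\circ(\pi\otimes\pi),
\]
suppressing unitors. Using $\pi\circ e=\id_{\one}$, we compute
\[
 m\circ(e\otimes\id_B)=x\circ e\circ\pi+y-x\circ e\circ\pi=y,
\]
and similarly $m\circ(\id_B\otimes e)=x+y\circ e\circ\pi-x\circ e\circ\pi=x$, using $x\circ e=y\circ e$. So this $m$ maps to $(y,x)$. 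Since $(x,y)$ ranges over $W$ exactly when $(y,x)$ does, the swap is harmless; one can also exchange the roles of $x$ and $y$ in the formula. The bookkeeping with unitors and the swap is the only place requiring care.
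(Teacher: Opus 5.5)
Your proposal is correct and follows essentially the same route as the paper. Both arguments take the affine unit slice of dimension $q_{B,e}$ cut by at most $t_B$ quadratic associativity equations, use Proposition~\ref{prop:fixed-unit-rigidity}, Theorem~\ref{thm:affine-bezout} and Lemma~\ref{lem:degree-counting}\textup{(i)}, and then identify $\operatorname{im}\Lambda_{B,e}$ with $\{(L,R):L\circ e=R\circ e\}$ via an explicit preimage and surjectivity of $(L,R)\mapsto L\circ e-R\circ e$. The only cosmetic difference is that the paper writes the preimage as $L\circ(\pi\otimes\id_B)+R\circ(\id_B\otimes\pi)-v\circ(\pi\otimes\pi)$, which hits $(L,R)$ directly without the swap you noted.
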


\begin{proof}
On the affine unit slice, associativity is a system of at most $t_B$
quadratic equations.  Proposition~\ref{prop:fixed-unit-rigidity},
Theorem~\ref{thm:affine-bezout}, and
Lemma~\ref{lem:degree-counting}\textup{(i)} give
\eqref{eq:fixed-unit-bound}.

Assume $\pi\circ e=\id_{\one}$.  The image of
$\Lambda_{B,e}$ is exactly
\begin{equation}\label{eq:image-lambda}
 \{(L,R)\in\End_{\C}(B)^2:L\circ e=R\circ e\}.
\end{equation}
The inclusion from left to right follows by evaluation on
$e\otimes e$.  Conversely, if $L\circ e=R\circ e=:v$, then
\[
 m=L\circ(\pi\otimes\id_B)
   +R\circ(\id_B\otimes\pi)
   -v\circ(\pi\otimes\pi)
\]
has $\Lambda_{B,e}(m)=(L,R)$.  The map
$(L,R)\mapsto L\circ e-R\circ e$ from $\End_{\C}(B)^2$ onto
$\Hom_{\C}(\one,B)$ is surjective, since $w$ is the image of
$(w\circ\pi,0)$.  Thus the space in \eqref{eq:image-lambda} has
dimension $2E_B-u_B$, and rank--nullity gives \eqref{eq:q-exact}.
\end{proof}

\begin{corollary}[Connected fixed object]\label{cor:connected-fixed-object}
Suppose $u_B=1$ and a nonzero morphism $e:\one\to B$ has a retraction;
this holds, for example, when $\C$ is semisimple with simple tensor unit
and $B$ is connected.
Then the total number of separable algebra-isomorphism classes on $B$ is at
most
\begin{equation}\label{eq:connected-fixed-object}
 2^{\min\{a_B-2E_B+1,t_B\}}.
\end{equation}
\end{corollary}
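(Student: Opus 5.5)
The plan is to deduce the corollary from Theorem~\ref{thm:fixed-unit-bound} by showing that, when $u_B=1$, fixing the unit loses nothing. First I would check the parenthetical claim. If $\C$ is semisimple with simple tensor unit $\one$ and $B$ is connected, then $\one$ occurs in $B$ with multiplicity one, so $B\cong\one\oplus B'$ with $\Hom_{\C}(\one,B')=0$. Any nonzero $e:\one\to B$ is then a nonzero multiple of the inclusion of the summand $\one$, and the corresponding projection, rescaled, gives a retraction $\pi$ with $\pi\circ e=\id_{\one}$.

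Next I would compare the two counting problems. Every unital algebra structure $(m,e')$ on $B$ has $e'\neq0$: if $e'=0$, the unit law forces $\id_B=0$, hence $B=0$ and $u_B=0$, contradicting $u_B=1$. Since $\Hom_{\C}(\one,B)=\kk e$, we get $e'=\lambda e$ for some $\lambda\in\kk^\times$. Because $\lambda\,\id_B$ lies in $\Aut_{\C}(B)$, transport of structure by $g=\lambda\,\id_B$ in \eqref{eq:transport-structure} sends $(m,e')$ to $(\lambda m,\lambda e')$. Using $g=\lambda^{-1}\id_B$ instead, every structure is therefore isomorphic to one with unit exactly $e$. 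Separability is transported along algebra isomorphisms, so every separable isomorphism class on $B$ meets $\Alg_e(B)$ in a separable point.

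The natural map from separable structures with unit $e$ modulo $G_{B,e}$ to separable isomorphism classes is thus surjective. An isomorphism $g$ between two structures that both have unit $e$ satisfies $g\circ e=e$, so it lies in $G_{B,e}$, and the map is in fact a bijection; only surjectivity is needed for the bound. Theorem~\ref{thm:fixed-unit-bound} bounds the source by $2^{\min\{q_{B,e},t_B\}}$, and \eqref{eq:q-exact} with $u_B=1$ gives $q_{B,e}=a_B-2E_B+1$. This yields \eqref{eq:connected-fixed-object}.

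I expect no real obstacle. The only points needing care are the nonvanishing of the unit and the rescaling by $\lambda\,\id_B$. The semisimple example additionally requires writing out the retraction explicitly.
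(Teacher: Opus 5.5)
Your argument is correct and is the paper's proof: all nonzero maps $\one\to B$ are proportional, a scalar automorphism of $B$ moves any unit to $e$, and Theorem~\ref{thm:fixed-unit-bound} together with \eqref{eq:q-exact} at $u_B=1$ gives the bound. One harmless slip: by \eqref{eq:transport-structure}, $g=\lambda\,\id_B$ sends $(m,e')$ to $(\lambda^{-1}m,\lambda e')$, not $(\lambda m,\lambda e')$ (the latter is not even unital unless $\lambda^2=1$); since only the transported unit matters, your argument is unaffected.
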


\begin{proof}
All nonzero maps $\one\to B$ are scalar multiples of one another, and a
scalar automorphism of $B$ transports any unit to the chosen one.  Apply
Theorem~\ref{thm:fixed-unit-bound}.
\end{proof}
\section{Homomorphisms and inner conjugacy}\label{sec:homomorphisms}

Deformations of algebra morphisms and diagrams were developed by
Gerstenhaber--Schack \cite{GerstenhaberSchack83,GerstenhaberSchack85};
see also \cite[\S\S{}5--6]{FregierMarklYau} for a later treatment of the
deformation complex.  Here the target algebra is fixed, and separability
identifies infinitesimal variations with inner-conjugation directions.

Throughout this section, $\C$ is a Hom-finite $\kk$-linear monoidal
category over an algebraically closed field $\kk$.  Fix a separable algebra
$A=(A,m_A,e_A)$ and an arbitrary algebra $X=(X,m_X,e_X)$.  Inside the
finite-dimensional affine space
$\Hom_{\C}(A,X)$, let $\Hom_{\mathrm{alg}}(A,X)$ be the reduced common zero
locus of the polynomial maps
\[
  f\longmapsto f\circ e_A-e_X,
  \qquad
  f\longmapsto f\circ m_A-m_X\circ(f\otimes f).
\]
Thus its points are precisely the unital algebra morphisms $A\to X$.

At $f\in\Hom_{\mathrm{alg}}(A,X)$, every Zariski tangent vector $\xi$
necessarily satisfies the linearized equations
\begin{align}
  \xi\circ e_A&=0,\label{eq:tangent-hom-unit}\\
  \xi\circ m_A
  &=m_X\circ(\xi\otimes f)+m_X\circ(f\otimes\xi).
  \label{eq:tangent-hom-mult}
\end{align}
Use $f$ to define actions
\begin{equation}\label{eq:f-actions}
  \ell_f:=m_X\circ(f\otimes\id_X):A\otimes X\longrightarrow X,
  \qquad
  r_f:=m_X\circ(\id_X\otimes f):X\otimes A\longrightarrow X.
\end{equation}
Since $f$ is a unital algebra homomorphism, associativity and unitality of
$X$ show that these morphisms make $X$ an $A$--$A$-bimodule, denoted
${}_fX_f$.

With these actions, equation~\eqref{eq:tangent-hom-mult} is precisely the
derivation identity.  The unit equation~\eqref{eq:tangent-hom-unit} is part
of the linearized definition of the homomorphism variety.  Conversely, every
derivation satisfies it automatically: evaluating the derivation identity on
$e_A\otimes e_A$ gives
$\xi\circ e_A=2\xi\circ e_A$, hence $\xi\circ e_A=0$.  Therefore
\begin{equation}\label{eq:tangent-hom-z1}
  T_f\Hom_{\mathrm{alg}}(A,X)\subseteq Z^1(A,{}_fX_f).
\end{equation}

The unit group $\Gamma(X)^\times$ is a smooth affine algebraic group over
$\kk$, with $T_{e_X}\Gamma(X)^\times=\Gamma(X)$.
Indeed, for a polynomial $q$ on a vector space $V$, write
$D(q):=\{v\in V:q(v)\ne0\}$.  Left multiplication in the
finite-dimensional $\kk$-algebra $\Gamma(X)$ gives a linear map
\[
  L:\Gamma(X)\longrightarrow\End_{\kk}(\Gamma(X)),
  \qquad u\longmapsto L_u,
\]
Put $q:=\det\circ L$.  If $u$ is invertible, then $q(u)\ne0$.  Conversely,
if $q(u)\ne0$, set $v=L_u^{-1}(e_X)$, so $u\star v=e_X$.  Associativity gives
$L_u(v\star u-e_X)=0$, and the injectivity of $L_u$ gives
$v\star u=e_X$.  Thus $\Gamma(X)^\times=D(q)$.  Multiplication is
polynomial, while
\[
 u^{-1}=L_u^{-1}(e_X)
\]
is regular on $D(q)$, which is open in the vector space $\Gamma(X)$.
Hence $\Gamma(X)^\times$ is a smooth affine algebraic
group, and its conjugation action \eqref{eq:conjugation-action} is algebraic.

For $f\in\Hom_{\mathrm{alg}}(A,X)$, let
\[
  \omega_f:\Gamma(X)^\times\longrightarrow
  \Hom_{\mathrm{alg}}(A,X),
  \qquad u\longmapsto u\boldsymbol\cdot f
\]
be the orbit map.  Its Zariski differential at the identity $e_X$ is
\begin{equation}\label{eq:orbit-differential-hom}
  (\mathrm d\omega_f)_{e_X}(z)
  =m_X\circ(z\otimes f)-m_X\circ(f\otimes z):A\longrightarrow X.
\end{equation}
Formula~\eqref{eq:d0} for the actions \eqref{eq:f-actions} shows that
\eqref{eq:orbit-differential-hom} is $-d^0z$.  Thus the image of the
orbit-map differential is $B^1(A,{}_fX_f)$, and it is contained in the
tangent space of the homomorphism variety.  Separability and
Proposition~\ref{prop:hochschild-vanishing} give the
following chain of inclusions and equalities:
\begin{equation}\label{eq:full-tangent-hom}
  \operatorname{im}(\mathrm d\omega_f)_{e_X}
  =B^1(A,{}_fX_f)
  \subseteq T_f\Hom_{\mathrm{alg}}(A,X)
  \subseteq Z^1(A,{}_fX_f)
  =B^1(A,{}_fX_f).
\end{equation}

\begin{theorem}[Homomorphism rigidity]\label{thm:homomorphisms}
Let $A$ be a separable algebra object and $X$ any algebra object in a
Hom-finite $\kk$-linear monoidal category.
\begin{enumerate}[label=\textup{(\roman*)}]
  \item There are only finitely many inner-conjugacy classes of unital
  algebra homomorphisms $A\to X$.
  \item If $X$ is connected, then $\Hom_{\mathrm{alg}}(A,X)$ is a finite
  set.
\end{enumerate}
\end{theorem}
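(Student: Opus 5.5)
The plan is to apply Lemma~\ref{lem:open-orbit} to the algebraic action of the smooth affine group $\Gamma(X)^\times$ on the affine variety $V:=\Hom_{\mathrm{alg}}(A,X)$ by $u\boldsymbol\cdot f=c_u\circ f$. First I will check that the action is algebraic and preserves $V$. The map $(u,f)\mapsto c_u\circ f$ is polynomial in $u$, $f$ and $u^{-1}=L_u^{-1}(e_X)$, and the latter is regular on $D(q)$. Moreover, $c_u$ is a unital algebra automorphism, so $c_u\circ f$ is again a unital algebra homomorphism. Next, for each $f\in V$ the chain \eqref{eq:full-tangent-hom}, which already uses the separability of $A$ through Proposition~\ref{prop:hochschild-vanishing} with $M={}_fX_f$, forces all inclusions to be equalities. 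Hence $\operatorname{im}(\mathrm d\omega_f)_{e_X}=T_fV$, i.e.\ the orbit-map differential at the identity is surjective.

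By Lemma~\ref{lem:open-orbit}, every orbit $\Gamma(X)^\times\boldsymbol\cdot f$ is therefore open in $V$. The same lemma shows that $V$ has only finitely many open orbits, since each irreducible component meets at most one of them. Because every point of $V$ lies in some orbit, all of which are open, $V$ is a finite union of orbits. These orbits are exactly the inner-conjugacy classes, which proves \textup{(i)}.

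For \textup{(ii)}, I will use connectedness: it gives $\Gamma(X)=\kk e_X$, so $\Gamma(X)^\times=\kk^\times e_X$. For $\lambda\ne0$ we have $c_{\lambda e_X}=\lambda\lambda^{-1}\,m_X\circ(m_X\otimes\id_X)\circ(e_X\otimes\id_X\otimes e_X)=\id_X$, so the action is trivial and every orbit is a single point. By \textup{(i)} there are finitely many orbits, hence $V$ is finite. As an alternative route, one can note that the image of the orbit differential is $B^1(A,{}_fX_f)=\{m_X\circ(z\otimes f)-m_X\circ(f\otimes z)\}$, which vanishes for $z\in\kk e_X$ by unitality. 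Then $T_fV=0$ for all $f$, and Lemma~\ref{lem:zero-tangent} gives finiteness directly.

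The only delicate point is the justification that the orbit map is a morphism into $V$ whose differential is \eqref{eq:orbit-differential-hom}, so that Lemma~\ref{lem:open-orbit} applies with $V$ taken with its reduced structure. The tangent inclusion $T_fV\subseteq Z^1$ was already established from the linearized equations, so no reducedness issue arises. I expect this verification to be routine, and no genuine obstacle remains beyond the Hochschild vanishing already proved.
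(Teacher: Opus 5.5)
Your proposal is correct and follows the paper's proof essentially verbatim. The chain \eqref{eq:full-tangent-hom} gives surjectivity of the orbit-map differential, and Lemma~\ref{lem:open-orbit} then makes every $\Gamma(X)^\times$-orbit open and finite in number; for connected $X$, the trivial action, or equivalently vanishing tangent spaces together with Lemma~\ref{lem:zero-tangent}, yields finiteness of $\Hom_{\mathrm{alg}}(A,X)$, exactly as in the paper.
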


\begin{proof}
Equation~\eqref{eq:full-tangent-hom} says that every orbit-map differential
is surjective.  Lemma~\ref{lem:open-orbit} therefore shows that every
$\Gamma(X)^\times$-orbit is open
in $\Hom_{\mathrm{alg}}(A,X)$, and there are only finitely many such orbits.

If $X$ is connected, then $\Gamma(X)=\kk e_X$ and inner conjugation is
trivial.  Consequently every orbit is a singleton, so the homomorphism set
is finite.  Equivalently, every inner derivation is zero;
\eqref{eq:full-tangent-hom} then gives zero tangent space at every point, and
Lemma~\ref{lem:zero-tangent} applies.
\end{proof}

\subsection*{Effective bound}

The unit condition cuts out either the empty set or an affine translate of
\begin{equation}\label{eq:unit-kernel-hom}
 K_{A,X}:=\ker\bigl(\Hom_{\C}(A,X)\longrightarrow\Hom_{\C}(\one,X),
 f\longmapsto f\circ e_A\bigr).
\end{equation}
Set
\begin{equation}\label{eq:hom-data}
 b_{A,X}:=\dim_{\kk}K_{A,X},
 \qquad
 c_{A,X}:=\dim_{\kk}\Hom_{\C}(A\otimes A,X).
\end{equation}

\begin{theorem}[Effective homomorphism rigidity]\label{thm:hom-bound}
The number of inner-conjugacy classes of unital algebra homomorphisms
$A\to X$ is at most
\begin{equation}\label{eq:hom-bound}
 2^{\min\{b_{A,X},c_{A,X}\}}.
\end{equation}
If $X$ is connected, the same expression bounds the cardinality of
$\Hom_{\mathrm{alg}}(A,X)$.  If, in addition, this set is nonempty, then
\begin{equation}\label{eq:b-connected}
 b_{A,X}=\dim_{\kk}\Hom_{\C}(A,X)-1.
\end{equation}
\end{theorem}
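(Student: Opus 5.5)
The plan is to repeat the pattern of Theorem~\ref{thm:fixed-object-bound}: realize $\Hom_{\mathrm{alg}}(A,X)$ as the zero locus of a small number of quadratic equations in a low-dimensional affine space, bound its cumulative degree with Theorem~\ref{thm:affine-bezout}, and then count open orbits with Lemma~\ref{lem:degree-counting}\textup{(i)}.

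First, the parameter space. If $\Hom_{\mathrm{alg}}(A,X)$ is empty there is nothing to prove. Otherwise fix some $f_0$ with $f_0\circ e_A=e_X$. The unit condition then cuts out the affine subspace $f_0+K_{A,X}$, whose dimension is $b_{A,X}$, and I identify it with $\mathbb A^{b_{A,X}}$ through an affine coordinate change. On this subspace $\Hom_{\mathrm{alg}}(A,X)$ is the reduced zero locus of the single morphism equation $f\circ m_A-m_X\circ(f\otimes f)=0$. This equation lives in $\Hom_{\C}(A\otimes A,X)$, so after choosing a basis it amounts to at most $c_{A,X}$ scalar polynomial equations, each of degree at most two. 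Theorem~\ref{thm:affine-bezout} with $n=b_{A,X}$, $s\le c_{A,X}$ and $d=2$ then gives
\[
\cdeg\bigl(\Hom_{\mathrm{alg}}(A,X)\bigr)\le 2^{\min\{b_{A,X},c_{A,X}\}}.
\]
Since restricting to the unit slice does not change the point set, I do not have to compare the reduced structures.

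Second, the orbit count. By \eqref{eq:full-tangent-hom} and Lemma~\ref{lem:open-orbit}, every $\Gamma(X)^\times$-orbit in $\Hom_{\mathrm{alg}}(A,X)$ is open. Lemma~\ref{lem:degree-counting}\textup{(i)} therefore bounds the number of inner-conjugacy classes by the cumulative degree above. When $X$ is connected, inner conjugation is trivial, so each orbit is a single point and the same number bounds $|\Hom_{\mathrm{alg}}(A,X)|$. Alternatively, every point is then isolated, and Lemma~\ref{lem:degree-counting}\textup{(ii)} gives the same conclusion.

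Third, the dimension formula \eqref{eq:b-connected}. Connectedness gives $\Hom_{\C}(\one,X)=\kk e_X$, which is one-dimensional. Nonemptiness supplies some $f_0$ with $f_0\circ e_A=e_X\neq0$, so the linear map $f\mapsto f\circ e_A$ is surjective onto $\Hom_{\C}(\one,X)$. Rank--nullity then yields $b_{A,X}=\dim_{\kk}\Hom_{\C}(A,X)-1$. I expect this step to be the only one needing any care, namely checking that $e_X\ne0$, which follows from the observation after the definition of connectedness. The remaining work is bookkeeping on the degrees and counts of the equations, specifically confirming that the multiplicativity equation stays quadratic after the affine change of coordinates.
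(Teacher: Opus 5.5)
Your proposal is correct and follows the paper's proof essentially step for step. Both use the unit slice of dimension $b_{A,X}$ cut out by at most $c_{A,X}$ quadratic equations, the affine B\'ezout bound, openness of every $\Gamma(X)^\times$-orbit from \eqref{eq:full-tangent-hom} and Lemma~\ref{lem:open-orbit}, the count via Lemma~\ref{lem:degree-counting}\textup{(i)}, and rank--nullity for \eqref{eq:b-connected}, with your added remarks (intrinsic tangent spaces and components, the isolated-point alternative) being harmless refinements of the same argument.
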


\begin{proof}
If the affine unit slice is empty, the assertion is immediate.
Otherwise, inside this slice of dimension $b_{A,X}$, multiplicativity is a
system of at most $c_{A,X}$ quadratic equations.  Its inner-conjugacy orbits
are open by \eqref{eq:full-tangent-hom} and
Lemma~\ref{lem:open-orbit}.  The bound follows from
Theorem~\ref{thm:affine-bezout} and
Lemma~\ref{lem:degree-counting}\textup{(i)}.  If $X$ is connected, inner
conjugation is trivial.  Finally, when a unital map exists, the linear map in
\eqref{eq:unit-kernel-hom} has nonzero image in the one-dimensional space
$\Hom_{\C}(\one,X)$, hence rank one.  Thus rank--nullity proves
\eqref{eq:b-connected}.
\end{proof}
\section{Separable algebra subobjects}\label{sec:subobjects}

Let $X$ be an algebra object in a Hom-finite $\kk$-linear monoidal category
over an algebraically closed field $\kk$.  We now combine the two local
results; the only additional global input is that only finitely many
isomorphism classes of objects occur as subobjects of $X$.

\begin{theorem}[Finiteness of separable algebra subobjects]
\label{thm:subobjects}
Let $X$ be an algebra object in a Hom-finite $\kk$-linear monoidal category.
Assume that only finitely many isomorphism classes of objects occur as
subobjects of $X$.
\begin{enumerate}[label=\textup{(\roman*)}]
  \item The separable algebra subobjects of $X$ form only finitely many
  inner-conjugacy classes.
  \item If $X$ is connected, they form only finitely many equivalence
  classes over $X$.
\end{enumerate}
The algebra $X$ itself need not be separable.
\end{theorem}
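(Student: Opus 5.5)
The plan is to reduce to the two local results already proved: fixed-object rigidity (Theorem~\ref{thm:fixed-object}) and homomorphism rigidity (Theorem~\ref{thm:homomorphisms}). The argument is a two-level finite stratification. By hypothesis, choose finitely many objects $B_1,\dots,B_k$ such that every subobject of $X$ is isomorphic to some $B_j$.

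First I normalize an arbitrary separable algebra subobject $(A,i)$. Pick $j$ and an isomorphism $\alpha:B_j\to A$, and transport the algebra structure of $A$ along $\alpha$ to a structure $(m,e)\in\Alg(B_j)$. Then $\alpha$ becomes an algebra isomorphism, separability transports along it (conjugate the section $s_A$ by $\alpha$), and $(A,i)$ is equivalent over $X$ to $\bigl((B_j,m,e),\,i\circ\alpha\bigr)$, where $i\circ\alpha$ is still a monomorphism. So it suffices to treat subobjects whose underlying object is one of the $B_j$.

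Next, Theorem~\ref{thm:fixed-object} says each $B_j$ carries only finitely many isomorphism classes of separable algebra structures. Choose representatives $A_{j,1},\dots,A_{j,n_j}$. If $(B_j,m,e)$ is separable, there is an algebra isomorphism $h:A_{j,r}\to(B_j,m,e)$ for some $r$, and the subobject is then equivalent over $X$ to $(A_{j,r},\,i\circ\alpha\circ h)$. Thus every separable algebra subobject is equivalent over $X$ to one of the form $(A_{j,r},f)$, where $f:A_{j,r}\to X$ is a unital algebra monomorphism.

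Finally, apply Theorem~\ref{thm:homomorphisms}(i) to each of the finitely many separable sources $A_{j,r}$. The unital algebra maps $A_{j,r}\to X$ fall into finitely many $\Gamma(X)^\times$-orbits, so the monomorphisms among them do too. If $c_u\circ f=f'$, then $(A_{j,r},f)$ and $(A_{j,r},f')$ are inner conjugate with $h=\id$. Since equivalence over $X$ is the special case $u=e_X$ of inner conjugacy, and inner conjugacy is an equivalence relation because $c_{u\star v}=c_u\circ c_v$, the total number of classes is at most $\sum_{j,r}(\text{number of orbits})<\infty$. This proves (i).

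For (ii), connectedness gives $\Gamma(X)=\kk e_X$. Every $c_u$ is then the identity, since a scalar $u=\lambda e_X$ has $c_u=\lambda\lambda^{-1}\id_X$. Hence inner conjugacy coincides with equivalence over $X$, and (ii) follows from (i); alternatively, use Theorem~\ref{thm:homomorphisms}(ii) directly.

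I expect no serious obstacle. The step needing most care is the bookkeeping in the normalization: checking that transported structures remain separable and that composing the embedding with algebra isomorphisms preserves both monicity and the class relation. The finiteness content itself comes entirely from the two cited theorems.
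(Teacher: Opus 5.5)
Your proposal is correct and follows essentially the same route as the paper's proof. Both transport each separable algebra subobject to one of finitely many representative objects, use Theorem~\ref{thm:fixed-object} to reduce to finitely many separable source algebras, and apply Theorem~\ref{thm:homomorphisms}(i) to each source, restricting to monomorphisms. In both, (ii) follows because connectedness makes inner conjugation trivial.
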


\begin{proof}
Choose representatives $B_1,\ldots,B_N$ for the isomorphism classes of
objects occurring as subobjects of $X$.  By
Theorem~\ref{thm:fixed-object}, each $B_r$ supports only finitely many
separable algebra structures up to algebra isomorphism.

Let $(A,i)$ be a separable algebra subobject whose underlying object is
isomorphic to $B_r$, and choose an object isomorphism
$\phi:B_r\xrightarrow{\sim}A$.  Transport the algebra structure and the
embedding to $B_r$ by
\[
 m_{B_r}:=\phi^{-1}\circ m_A\circ(\phi\otimes\phi),\qquad
 e_{B_r}:=\phi^{-1}\circ e_A,\qquad
 i_{B_r}:=i\circ\phi.
\]
Then $\phi:(B_r,m_{B_r},e_{B_r})\to(A,m_A,e_A)$ is an algebra
isomorphism, $i_{B_r}$ is an algebra monomorphism, and separability is
preserved.  Running over all $r$ therefore leaves a finite list of
representative separable source algebras $A_1,\ldots,A_s$.

For each fixed $A_j$, Theorem~\ref{thm:homomorphisms}\textup{(i)} gives
only finitely many inner-conjugacy classes of algebra maps $A_j\to X$.
Restricting to monomorphisms and taking the finite union over $j$ proves
\textup{(i)}.  If $X$ is connected, inner conjugacy is trivial, and
\textup{(ii)} follows.
\end{proof}

\begin{corollary}[Semisimple ambient setting]\label{cor:semisimple}
Let $\C$ be a semisimple $\kk$-linear monoidal category, and
let $X$ be a connected algebra object of finite length.  Then $X$ has only
finitely many separable algebra subobjects up to equivalence over $X$.

\end{corollary}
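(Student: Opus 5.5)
The plan is to deduce the corollary from Theorem~\ref{thm:subobjects}\textup{(ii)}, since $\C$ is Hom-finite and $X$ is connected by hypothesis. The only thing left to check is the global hypothesis of that theorem: only finitely many isomorphism classes of objects occur as subobjects of $X$. I would establish this from semisimplicity in the sense of Definition~\ref{def:semisimple-linear-category}, making precise the remark after that definition.

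First, decompose $X\cong\bigoplus_{k=1}^n S_k^{\oplus\mu_k}$ with pairwise nonisomorphic simple $S_k$ and finite multiplicities $\mu_k$. I take ``finite length'' to mean that such a finite decomposition exists; in this setting every object has one anyway. The Schur property gives $\dim_{\kk}\Hom_{\C}(S,X)=\mu_k$ when $S\cong S_k$, and $0$ when $S$ is not isomorphic to any $S_k$.

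Next, let $i:A\to X$ be a monomorphism and decompose $A\cong\bigoplus_S S^{\oplus\nu_S}$ similarly. Since $i$ is left-cancellable, postcomposition $\Hom_{\C}(S,A)\to\Hom_{\C}(S,X)$ is injective for every simple $S$. The Schur property gives $\dim\Hom_{\C}(S,A)=\nu_S$. Hence $\nu_S=0$ unless $S\cong S_k$ for some $k$, and $\nu_{S_k}\le\mu_k$.

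So the isomorphism class of $A$ is determined by a tuple $(\nu_1,\ldots,\nu_n)$ with $0\le\nu_k\le\mu_k$. This leaves at most $\prod_k(\mu_k+1)$ possibilities, and Theorem~\ref{thm:subobjects}\textup{(ii)} then gives the conclusion.

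The main point needing care is the identification $\dim\Hom_{\C}(S,A)=\nu_S$. It uses additivity, so that $\Hom$ distributes over the finite direct sum, together with $\End(S)=\kk$ and the vanishing of $\Hom$ between nonisomorphic simples. Both are part of the Schur hypothesis, so no abelian structure is needed.
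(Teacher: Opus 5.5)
Your proposal is correct and follows the paper's own route. The paper likewise deduces the corollary from Theorem~\ref{thm:subobjects}\textup{(ii)}. It checks the finite-subobject hypothesis by the remark after Definition~\ref{def:semisimple-linear-category}: a monomorphism into $X$ induces an injection on each $\Hom_{\C}(S,-)$ with $S$ simple, so the multiplicities of the source are bounded by those of $X$.
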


\begin{proof}
By Definition~\ref{def:semisimple-linear-category}, a finite-length object
has only finitely many isomorphism classes of subobjects.  The conclusion is
therefore Theorem~\ref{thm:subobjects}\textup{(ii)}.
\end{proof}

\begin{example}[The subobject hypothesis is strictly weaker]\label{ex:strict-subobject}
Let $G=\mathbb G_a=(\kk,+)$ be the additive algebraic group, and consider
the tensor category of finite-dimensional rational $G$-representations over
$\kk$; see \cite[\S{}I.2]{Jantzen}.  This category is not semisimple.  On
$V=\kk^2$, let
\[
 \rho(t)=\begin{pmatrix}1&t\\0&1\end{pmatrix}\qquad(t\in\kk).
\]
This is a nonsplit extension of the trivial representation by itself.  Its
only invariant subspaces are $0$, the line spanned by the first basis vector,
and $V$.  Thus the hypothesis of Theorem~\ref{thm:subobjects} can hold for a
nonsemisimple object in a nonsemisimple tensor category.
\end{example}

\subsection*{Effective bounds}

Choose representatives $B_1,\ldots,B_N$ for the isomorphism classes of
objects occurring as subobjects of $X$.  For each $r$, define
$a_r,u_r,t_r,E_r$ from \eqref{eq:fixed-object-data}, with $B=B_r$, and put
\begin{equation}\label{eq:subobject-data}
 h_r:=\dim_{\kk}\Hom_{\C}(B_r,X),
 \qquad
 c_r:=\dim_{\kk}\Hom_{\C}(B_r\otimes B_r,X).
\end{equation}
Discard any $B_r$ that admits no unital algebra structure with an algebra
monomorphism into $X$; such an object contributes no algebra subobject.

\begin{theorem}[Effective subobject bound]\label{thm:subobject-bound}
The number of inner-conjugacy classes of separable algebra subobjects of
$X$ is at most
\begin{equation}\label{eq:subobject-bound}
 \sum_{r=1}^N
 2^{\min\{a_r+u_r,t_r+2E_r\}+\min\{\max\{h_r-1,0\},c_r\}}.
\end{equation}
If $X$ is connected, the same expression bounds equivalence classes over
$X$.
\end{theorem}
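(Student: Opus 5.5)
The plan is to follow the proof of Theorem~\ref{thm:subobjects}, replacing each qualitative finiteness input by the effective bound proved earlier. Every separable algebra subobject $(A,i)$ has underlying object isomorphic to a unique representative $B_r$. Transporting structure along an isomorphism $\phi:B_r\to A$, as in that proof, replaces $(A,i)$ by an equivalent pair $(B_r,m,e,i\circ\phi)$. This pair is a separable algebra structure on $B_r$ together with an algebra monomorphism into $X$. So it suffices to bound, for each $r$ that survives the discarding, the number of inner-conjugacy classes of such pairs with underlying object $B_r$, and then sum over $r$. The bound should factor as (number of separable isomorphism classes of algebra structures on $B_r$) times (maximal number of inner-conjugacy classes of homomorphisms from one such algebra into $X$).

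For the first factor, Theorem~\ref{thm:fixed-object-bound} directly gives at most $2^{\min\{a_r+u_r,t_r+2E_r\}}$ classes of separable algebra structures $A_{r,1},\dots,A_{r,k_r}$ on $B_r$.

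For the second factor, fix a representative $A=A_{r,j}$. Two subobjects $(A,i)$ and $(A,i')$ are inner conjugate whenever $c_u\circ i=i'$ (take $h=\id$). Hence the number of inner-conjugacy classes of subobjects with source isomorphic to $A$ is at most the number of inner-conjugacy classes of unital algebra maps $A\to X$. By Theorem~\ref{thm:hom-bound}, this is at most $2^{\min\{b_{A,X},c_{A,X}\}}$, and $c_{A,X}=c_r$ since it depends only on the underlying object.

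It remains to show $b_{A,X}\le\max\{h_r-1,0\}$. The kernel $K_{A,X}$ sits inside $\Hom_{\C}(B_r,X)$, which has dimension $h_r$. Because $B_r$ was not discarded, some unital map $f:A\to X$ exists. If $e_X\neq0$, then $f\circ e_A=e_X\neq0$, so the evaluation map $g\mapsto g\circ e_A$ has rank at least one and $b_{A,X}\le h_r-1$. If $e_X=0$, then $\id_X=0$, so $X=0$ and $h_r=0$, whence $b_{A,X}=0$. In either case $b_{A,X}\le\max\{h_r-1,0\}$.

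The main point requiring care is the rank argument for the unit evaluation map in the general, not necessarily connected, case. This is exactly what the $\max\{\cdot,0\}$ in the bound absorbs.

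Multiplying the two factors and summing over $r$ gives \eqref{eq:subobject-bound}. For connected $X$, inner conjugacy of subobjects coincides with equivalence over $X$, as noted in the setting section, so the same sum bounds the equivalence classes.
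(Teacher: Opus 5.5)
Your proposal is correct and follows essentially the same route as the paper. You transport each subobject to a representative $B_r$, bound the separable source algebras by Theorem~\ref{thm:fixed-object-bound} and the maps from each source by Theorem~\ref{thm:hom-bound}, and control $b_{A,X}$ through the rank of unit evaluation, with $e_X=0$ forcing $h_r=0$.

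One small correction. The fact that $B_r$ survives the discarding does not mean that each separable representative $A_{r,j}$ admits a unital map to $X$. A source with no such map simply contributes no classes, which is how the paper handles it, so the bound is unaffected.
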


\begin{proof}
For a fixed underlying object $B_r$,
Theorem~\ref{thm:fixed-object-bound} bounds the possible separable source
algebras.  For each such source algebra $A$,
Theorem~\ref{thm:hom-bound} gives at most
$2^{\min\{b_{A,X},c_r\}}$ inner-conjugacy classes of algebra maps into
$X$.  If $e_X\neq0$, each source that admits a unital map to $X$
witnesses a nonzero value of the unit-evaluation map in
\eqref{eq:unit-kernel-hom}, so $b_{A,X}\leq h_r-1$; a source admitting no
such map contributes nothing.  If $e_X=0$, unitality forces
$\id_X=0$, whence $h_r=b_{A,X}=0$.  Thus in all cases
$b_{A,X}\leq\max\{h_r-1,0\}$, and the homomorphism factor is at most
$2^{\min\{\max\{h_r-1,0\},c_r\}}$.  The monomorphisms form a subset.  Summing over
$r$ proves the result, and connectedness makes inner conjugation trivial.
\end{proof}

Suppose now that $\C$ is semisimple with simple tensor unit,
and that $X$ is connected, of finite length, and decomposes as
\begin{equation}\label{eq:target-decomposition}
 X\cong\one\oplus\bigoplus_{i=1}^t S_i^{\oplus n_i},
\end{equation}
where the $S_i\not\cong\one$ are pairwise nonisomorphic simple objects.  Set
\begin{equation}\label{eq:target-data}
\begin{aligned}
 P_X&:=\prod_{i=1}^t(n_i+1),
 &E_X&:=\dim_{\kk}\End_{\C}(X),\\
 M_X&:=\dim_{\kk}\Hom_{\C}(X\otimes X,X),
 &T_X&:=\dim_{\kk}\Hom_{\C}(X^{\otimes3},X).
\end{aligned}
\end{equation}

\begin{corollary}[Ambient-object bound]\label{cor:target-only-bound}
Let $N_{\mathrm{sep}}(X)$ denote the number of equivalence classes over $X$
of separable algebra subobjects.  Then
\begin{equation}\label{eq:target-only-bound}
 N_{\mathrm{sep}}(X)
 \leq
 P_X\,
 2^{\min\{M_X-1,T_X\}+\min\{E_X-1,M_X\}}.
\end{equation}
If $X$ is a Frobenius algebra, the same estimate applies to separable
Frobenius subalgebras in the sense of
\cite[Definition~2.6]{GhoshPalcouxExchange}.
\end{corollary}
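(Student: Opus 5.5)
The plan is to specialize Theorem~\ref{thm:subobject-bound}, replacing each summand by a quantity that depends only on $X$. First I parametrize the underlying objects. Since $\C$ is semisimple and $X$ has the decomposition \eqref{eq:target-decomposition}, every subobject of $X$ is isomorphic to $\one^{\oplus\epsilon}\oplus\bigoplus_i S_i^{\oplus k_i}$ with $\epsilon\in\{0,1\}$ and $0\le k_i\le n_i$. A unital algebra monomorphism $i:B\to X$ with $e_X\ne0$ forces $e_B\ne0$, so $\epsilon=1$. Hence the representatives $B_r$ that are not discarded are indexed by the tuples $(k_i)$, and there are at most $P_X=\prod_i(n_i+1)$ of them. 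This produces the factor $P_X$, and it remains to bound each summand of \eqref{eq:subobject-bound} by $2^{\min\{M_X-1,T_X\}+\min\{E_X-1,M_X\}}$.

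Next I compare the dimensions for $B=B_r$, a direct summand of $X$ containing $\one$. Write $X\cong B\oplus B'$. Because Hom-spaces are additive, the spaces $\Hom(B\otimes B,B)$, $\Hom(B^{\otimes3},B)$, $\Hom(B,X)$ and $\Hom(B\otimes B,X)$ are direct summands of $\Hom(X\otimes X,X)$, $\Hom(X^{\otimes3},X)$, $\End(X)$ and $\Hom(X\otimes X,X)$, respectively. This gives $t_r\le T_X$, $h_r\le E_X$ and $c_r\le M_X$, so the second exponent is at most $\min\{E_X-1,M_X\}$.

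For the first exponent I will not use Theorem~\ref{thm:fixed-object-bound} directly, since $a_r+u_r=a_r+1$ could equal $M_X+1$. Instead I use Corollary~\ref{cor:connected-fixed-object}. Here $u_B=1$ because $B$ contains $\one$ exactly once and the unit is simple, and $e$ has a retraction. The corollary gives at most $2^{\min\{a_r-2E_r+1,t_r\}}$ classes. Since $B$ is nonzero and $\End(B)\ni\id_B$, we have $E_r\ge1$, so $a_r-2E_r+1\le a_r-1\le M_X-1$. Combined with $t_r\le T_X$, this bounds the first factor by $2^{\min\{M_X-1,T_X\}}$. Summing over at most $P_X$ objects then gives \eqref{eq:target-only-bound}, and connectedness identifies inner conjugacy with equivalence over $X$.

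The main point to check is the Frobenius addendum. A separable Frobenius subalgebra in the sense of \cite[Definition~2.6]{GhoshPalcouxExchange} is in particular a separable algebra subobject, and the equivalence relation on subalgebras is finer than, or equal to, equivalence over $X$ of the underlying algebra subobjects. The count therefore injects into $N_{\mathrm{sep}}(X)$ once I verify that the Frobenius structure on a subalgebra is determined by the embedding, for instance by restricting the counit of $X$. I expect this to follow from the cited definition, and it is the step needing care. A secondary subtlety is that the discarded objects with $\epsilon=0$ genuinely contribute nothing, which holds because $e_X\ne0$ for connected $X$.
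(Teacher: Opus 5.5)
Your proposal is correct and follows the paper's proof essentially step for step. Connectedness forces the $\one$-multiplicity to be one, which gives at most $P_X$ source objects. Corollary~\ref{cor:connected-fixed-object}, rather than Theorem~\ref{thm:fixed-object-bound}, bounds the source algebras via $a_B-2E_B+1\le a_B-1\le M_X-1$ and $t_B\le T_X$. Theorem~\ref{thm:hom-bound}, with $h_r\le E_X$ and $c_r\le M_X$, bounds the embeddings, and one then sums over the source objects. The Frobenius step you flag is exactly Lemma~\ref{lem:frobenius-unique}: the restricted pairing $\ev_X\circ(i\otimes i)$ fixes $\ev_B$, hence $\epsilon_B=\epsilon_X\circ i$ and the unique coevaluation. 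So the Frobenius structure is determined by the embedding, and your injection into $N_{\mathrm{sep}}(X)$ goes through.
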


\begin{proof}
Every algebra subobject of connected $X$ is connected.  Its underlying
object therefore has the form
\[
 \one\oplus\bigoplus_{i=1}^t S_i^{\oplus m_i},
 \qquad 0\leq m_i\leq n_i,
\]
so at most $P_X$ subobject isomorphism classes occur.  For such a source
$B$, the unit splits
and
\[
 q_{B,e}=a_B-2E_B+1\leq a_B-1\leq M_X-1,
 \qquad
 t_B\leq T_X.
\]
Moreover,
\[
 \dim_{\kk}\Hom_{\C}(B,X)-1\leq E_X-1,
 \qquad
 \dim_{\kk}\Hom_{\C}(B\otimes B,X)\leq M_X.
\]
Apply Corollary~\ref{cor:connected-fixed-object} and
Theorem~\ref{thm:hom-bound}, then sum over the $P_X$ possible isomorphism
classes.

Once an algebra embedding is fixed, the compatible Frobenius structure is
unique by Lemma~\ref{lem:frobenius-unique}.  Hence the same estimate applies.
\end{proof}

\begin{remark}[Fusion-rule form]
If $N_{ij}^{k}:=\dim_{\kk}\Hom_{\C}(S_i\otimes S_j,S_k)$ and $n_0=1$ denotes the
multiplicity of $\one=S_0$ in $X$, then
\[
 E_X=1+\sum_{i=1}^t n_i^2,
 \qquad
 M_X=\sum_{i,j,k=0}^t n_i n_j n_k N_{ij}^{k}.
\]
A similar contraction of two fusion matrices computes $T_X$.  Thus
\eqref{eq:target-only-bound} can be read directly from the fusion rules and
the multiplicity vector of $X$.
\end{remark}
\section{Frobenius subalgebras and exchange relations}\label{sec:frobenius}

Throughout this section, $\C$ is a Hom-finite $\kk$-linear monoidal
category over the algebraically closed field $\kk$.  We temporarily
assume Karoubianity for the exchange correspondence and remove it by
idempotent completion in the effective theorem.

A \emph{Frobenius algebra object} is an algebra
$(X,m_X,e_X)$ together with a coalgebra structure
\[
  \delta_X:X\longrightarrow X\otimes X,
  \qquad
  \epsilon_X:X\longrightarrow\one
\]
satisfying coassociativity, the counit identities, and the Frobenius
condition
\begin{equation}\label{eq:frobenius-equations}
  \delta_X\circ m_X
  =(m_X\otimes\id_X)\circ(\id_X\otimes\delta_X)
  =(\id_X\otimes m_X)\circ(\delta_X\otimes\id_X).
\end{equation}
The two equalities express that $\delta_X$ is a morphism of left and right
regular $X$-modules.

The Frobenius evaluation and coevaluation morphisms are
\begin{equation}\label{eq:frobenius-pairing-coeval}
  \ev_X:=\epsilon_X\circ m_X:X\otimes X\longrightarrow\one,
  \qquad
  \coev_X:=\delta_X\circ e_X:\one\longrightarrow X\otimes X.
\end{equation}
The Frobenius, unit, and counit relations give the two zig-zag identities
\begin{equation}\label{eq:snake-identities}
  (\id_X\otimes\ev_X)\circ(\coev_X\otimes\id_X)=\id_X,
  \qquad
  (\ev_X\otimes\id_X)\circ(\id_X\otimes\coev_X)=\id_X.
\end{equation}
For Frobenius pairings and induced self-duality in rigid monoidal categories,
see \cite{FuchsStigner}.

If $i:B\to X$ is a morphism between Frobenius algebras, its dual
$i^*:X\to B$ is characterized by
\begin{equation}\label{eq:dual-morphism}
  \ev_X\circ(\id_X\otimes i)
  =\ev_B\circ(i^*\otimes\id_B).
\end{equation}
Following \cite{GhoshPalcouxLattices}, a Frobenius subalgebra is
represented by a unital algebra monomorphism $i:B\to X$
satisfying
\begin{equation}\label{eq:gp-conditions}
  i^*\circ i=\id_B,
  \qquad i^{**}=i.
\end{equation}
We call it \emph{separable} when its underlying algebra object is separable.

\begin{lemma}[Uniqueness of a Frobenius subalgebra structure]
\label{lem:frobenius-unique}
Fix a unital algebra monomorphism $i:(B,m_B,e_B)\to X$.
There is at most one pair $(\delta_B,\epsilon_B)$ making $B$ a Frobenius
algebra for which $i$ is a Frobenius-subalgebra embedding.
\end{lemma}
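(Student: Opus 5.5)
Fix a unital algebra monomorphism $i:B\to X$ and suppose two Frobenius structures $(\delta_B,\epsilon_B)$ and $(\delta_B',\epsilon_B')$ on $(B,m_B,e_B)$ both make $i$ a Frobenius-subalgebra embedding. We reduce everything to the counit: a Frobenius algebra is determined by its algebra structure together with its counit. The plan is to show first that \eqref{eq:gp-conditions} forces $\epsilon_B=\epsilon_X\circ i$, and then that the comultiplication is recovered from $m_B$ and $\epsilon_B$.

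For the first step, precompose the defining identity \eqref{eq:dual-morphism} with $e_X\otimes\id_B$. The left side becomes $\epsilon_X\circ m_X\circ(e_X\otimes i)=\epsilon_X\circ i$. The right side becomes $\epsilon_B\circ m_B\circ(i^*\circ e_X\otimes\id_B)$. It therefore suffices to show $i^*\circ e_X=e_B$, so that the right side reduces to $\epsilon_B\circ m_B\circ(e_B\otimes\id_B)=\epsilon_B$.

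To get $i^*\circ e_X=e_B$, write $e_X=i\circ e_B$ by unitality of $i$. Then $i^*\circ e_X=i^*\circ i\circ e_B=e_B$ by the first condition in \eqref{eq:gp-conditions}. Hence $\epsilon_B=\epsilon_X\circ i$, and the same holds for $\epsilon_B'$, so $\epsilon_B=\epsilon_B'$. (The dual $i^*$ itself depends on $\ev_B$, hence on $\epsilon_B$, but the argument only uses the relations satisfied by whichever $i^*$ accompanies each structure, so this causes no circularity.)

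For the second step, we use that the Frobenius pairing $\ev_B=\epsilon_B\circ m_B$ is now the same for both structures. By \eqref{eq:snake-identities} it is a nondegenerate pairing, with coevaluation $\coev_B$ as its partner. Two coevaluations for the same evaluation coincide: $\coev_B'$ is obtained from $\coev_B$ by applying the zig-zag identities for the two pairs in the standard way, so $\coev_B=\coev_B'$.

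Finally, the comultiplication is determined by $\coev_B$ and $m_B$. The Frobenius condition \eqref{eq:frobenius-equations} applied after $\id_B\otimes e_B$ gives
\[
 \delta_B=\delta_B\circ m_B\circ(\id_B\otimes e_B)=(m_B\otimes\id_B)\circ(\id_B\otimes\coev_B).
\]
Hence $\delta_B=\delta_B'$. The main obstacle is merely bookkeeping in the first step: one must evaluate \eqref{eq:dual-morphism} on the unit in the correct tensor slot, using suppressed unitors.
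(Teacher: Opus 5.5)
Your proof is correct and follows the paper's route: first fix the counit, then use uniqueness of the coevaluation for the fixed nondegenerate pairing $\ev_B$, then recover $\delta_B=(m_B\otimes\id_B)\circ(\id_B\otimes\coev_B)$. The only variation is in the first step: the paper precomposes the duality identity with $i\otimes\id_B$ to get $\ev_B=\ev_X\circ(i\otimes i)$, whereas you precompose with $e_X\otimes\id_B$ to get $\epsilon_B=\epsilon_X\circ i$ directly; your remark that each structure carries its own dual $i^*$ matches the paper's use of $i_r^*$.
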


\begin{proof}
Assume that two Frobenius structures
\[
 \mathsf F_r=(B,m_B,e_B,\delta_r,\epsilon_r),\qquad r=1,2,
\]
have this property.  Let $\ev_r=\epsilon_r\circ m_B$ and let
$i_r^*:X\to B$
be the dual of $i$ with respect to $\mathsf F_r$.  The defining duality
identity and $i_r^*\circ i=\id_B$ give
\begin{equation}\label{eq:restricted-frobenius-pairing}
  \ev_r=\ev_X\circ(i\otimes i),\qquad r=1,2.
\end{equation}
Hence $\ev_1=\ev_2=: \ev_B$.  Since the algebra structure is fixed,
\[
  \epsilon_r=\ev_B\circ(\id_B\otimes e_B),
\]
so $\epsilon_1=\epsilon_2$.

Put $\coev_r=\delta_r\circ e_B$.  Each $\coev_r$ is the
coevaluation morphism for the same nondegenerate evaluation morphism
$\ev_B$.  Such a coevaluation morphism is unique: contracting
$\coev_1\otimes\coev_2$ in the middle and applying the two zig-zag
identities gives
\begin{equation}\label{eq:coevaluation-unique}
  \coev_1=(\id_B\otimes\ev_B\otimes\id_B)\circ
  (\coev_1\otimes\coev_2)=\coev_2.
\end{equation}
Finally, the Frobenius identity and the unit recover the comultiplication:
\begin{equation}\label{eq:comultiplication-from-coevaluation}
  \delta_r=(\id_B\otimes m_B)\circ(\coev_r\otimes\id_B)
  =(m_B\otimes\id_B)\circ(\id_B\otimes\coev_r).
\end{equation}
Thus $\delta_1=\delta_2$ and the two Frobenius structures coincide.
\end{proof}

When the subobject hypothesis of Theorem~\ref{thm:subobjects} holds,
that theorem and Lemma~\ref{lem:frobenius-unique} already give qualitative
finiteness for separable Frobenius subalgebras.  The exchange argument
below removes that hypothesis and gives a bound directly in the smaller
parameter space $\End_{\C}(X)$.

\begin{corollary}[Frobenius subalgebras up to inner conjugacy]
\label{cor:frobenius-inner}
Let $X$ be a Frobenius algebra object in a Hom-finite $\kk$-linear
monoidal category.  Its separable Frobenius subalgebras form finitely
many classes under inner conjugacy of their underlying algebra
subobjects, in the sense of \S2.
The ambient inner automorphism is not required to preserve the chosen
Frobenius counit.
\end{corollary}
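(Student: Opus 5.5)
The plan is to recover the missing global input of Theorem~\ref{thm:subobjects} from the Frobenius condition $i^*\circ i=\id_B$ in \eqref{eq:gp-conditions}. Once only finitely many isomorphism classes of underlying objects are possible, I rerun the proof of Theorem~\ref{thm:subobjects}\textup{(i)} restricted to this finite list, and Lemma~\ref{lem:frobenius-unique} shows the Frobenius data add nothing new.

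\textbf{Step 1: each source is a retract cut out by an idempotent.} For a Frobenius subalgebra $i:B\to X$, put $p_i:=i\circ i^*\in\End_{\C}(X)$. Then $p_i^2=i\circ(i^*\circ i)\circ i^*=p_i$, so $p_i$ is an idempotent split by the pair $(i,i^*)$.

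\textbf{Step 2: conjugate idempotents give isomorphic sources.} Suppose $i':B'\to X$ is another such embedding and $p_{i'}=g\circ p_i\circ g^{-1}$ for some $g\in\Aut_{\C}(X)$. Define
\[
 \alpha:=i'^*\circ g\circ i:B\to B',
 \qquad
 \beta:=i^*\circ g^{-1}\circ i':B'\to B.
\]
Using $i^*\circ i=\id_B$, $i'^*\circ i'=\id_{B'}$ and the conjugacy relation, one checks $\beta\circ\alpha=\id_B$ and $\alpha\circ\beta=\id_{B'}$. So $B\cong B'$ as objects. This needs no Karoubianity, since the splittings are supplied by $i$ and $i^*$ themselves.

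\textbf{Step 3: finitely many conjugacy classes of idempotents.} I would show that the idempotents of the finite-dimensional algebra $E:=\End_{\C}(X)$ form finitely many $E^\times$-conjugacy classes. An idempotent $e$ gives a decomposition $E_E=eE\oplus(1-e)E$ of the regular right module. By Krull--Schmidt for finite-dimensional modules, there are only finitely many such decompositions up to isomorphism of the two summands. If $eE\cong fE$ and $(1-e)E\cong(1-f)E$, then assembling these isomorphisms gives a module automorphism of $E_E$, that is, left multiplication by a unit $u$, with $f=ueu^{-1}$. Combining with Step~2, only finitely many isomorphism classes of objects occur as underlying objects of Frobenius subalgebras. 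I expect this step to be the main obstacle: the verification is classical but must be stated carefully. Everything else is reuse of earlier results.

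\textbf{Step 4: conclude as in Theorem~\ref{thm:subobjects}.} Choose representatives $B_1,\dots,B_N$ for these classes. By Theorem~\ref{thm:fixed-object}, each $B_r$ carries finitely many separable algebra structures up to isomorphism. Transporting along object isomorphisms as in the proof of Theorem~\ref{thm:subobjects}, every separable Frobenius subalgebra is equivalent over $X$ to one with source among finitely many separable algebras $A_1,\dots,A_s$. Theorem~\ref{thm:homomorphisms}\textup{(i)} then leaves finitely many $\Gamma(X)^\times$-orbits of unital algebra maps $A_j\to X$. Restricting to those underlying a Frobenius-subalgebra embedding gives finitely many inner-conjugacy classes of the underlying algebra subobjects. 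By Lemma~\ref{lem:frobenius-unique}, the Frobenius structure on the source is determined by the algebra embedding, so no further classes arise. Conjugation by $c_u$ need not preserve $\epsilon_X$, which is exactly why the statement counts classes of underlying algebra subobjects only.
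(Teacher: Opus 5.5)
Your argument is correct, and it takes a genuinely different route from the paper.

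\textbf{The paper's route.} The paper derives this corollary from the quadratic exchange bound, Theorem~\ref{thm:exchange-bound}. It first reduces to a Karoubian category via the idempotent completion. It then encodes Frobenius subalgebras as points of $\Exch(X)$ through the Ghosh--Palcoux correspondence. Next it proves that joint source-and-embedding orbits are open (Lemma~\ref{lem:joint-open-orbit}). Pulling this back through the splitting chart of Lemma~\ref{lem:idempotent-chart} makes each separable inner-conjugacy class open in $\Exch(X)$. Finally it counts irreducible components by B\'ezout.

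\textbf{Your route.} You observe that $i^*\circ i=\id_B$ makes every source a retract of $X$. You then show that retracts of a fixed object in a Hom-finite category fall into finitely many isomorphism classes, for two reasons. First, conjugate idempotents have isomorphic images; your Step~2 uses only a retraction and no Karoubianity. Second, by Krull--Schmidt, the finite-dimensional algebra $\End_{\C}(X)$ has only finitely many $\Aut_{\C}(X)$-conjugacy classes of idempotents. For split embeddings, this supplies exactly the global hypothesis of Theorem~\ref{thm:subobjects}, and Theorems~\ref{thm:fixed-object} and~\ref{thm:homomorphisms}\textup{(i)} finish the argument. Lemma~\ref{lem:frobenius-unique} is not even needed, since the classes are defined through the underlying algebra subobjects.

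\textbf{What each approach buys.} Your route is more elementary. It needs neither the exchange correspondence, nor the joint deformation lemma, nor the regular chart, nor the Karoubi envelope. It applies verbatim to every separable algebra subobject whose underlying monomorphism splits in $\C$, whether or not it is Frobenius. It also shows that, at the qualitative level, the finite-subobject hypothesis which the paper says the exchange argument ``removes'' is in fact automatic for split subobjects.

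What your route does not give is the paper's intrinsic effective bound $2^{r_X}\leq2^{\dim_{\kk}\End_{\C}(X)-1}$. Feeding your finiteness into the argument of Theorem~\ref{thm:subobject-bound} yields a sum over at most $\prod_k(m_k+1)$ source classes. Here $m_k$ are the multiplicities of the indecomposable summands of the regular module of $\End_{\C}(X)$. The exponents in that sum involve Hom-dimensions of each source, such as $a_r$, $t_r$ and $c_r$. These are not controlled by $\End_{\C}(X)$ alone. That intrinsic exponent is precisely what the Hopf-algebra and subfactor applications require.
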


\begin{proof}
This follows from the quadratic bound in
Theorem~\ref{thm:exchange-bound}.  If $X=0$, its only algebra subobject is $0$.
\end{proof}

\begin{corollary}[Connected Frobenius-subalgebra finiteness]
\label{cor:frobenius}
If $X$ is connected, its separable Frobenius subalgebras form finitely
many equivalence classes over $X$.
\end{corollary}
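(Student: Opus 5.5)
The statement is Corollary~\ref{cor:frobenius}: for connected $X$, the separable Frobenius subalgebras form finitely many equivalence classes over $X$. I would deduce it from Corollary~\ref{cor:frobenius-inner}, in the same way Theorem~\ref{thm:homomorphisms}\textup{(ii)} is deduced from part~\textup{(i)}.

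\textbf{Step 1: reduce to inner conjugacy.} By Corollary~\ref{cor:frobenius-inner}, whose proof rests on Theorem~\ref{thm:exchange-bound}, the separable Frobenius subalgebras of $X$ fall into finitely many classes. Here two subalgebras are in the same class when their underlying algebra subobjects $(B,i)$ and $(B',i')$ are inner conjugate. That means there exist $u\in\Gamma(X)^\times$ and an algebra isomorphism $h:B\to B'$ with $c_u\circ i=i'\circ h$.

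\textbf{Step 2: use connectedness.} Since $X$ is connected, $\Gamma(X)=\kk e_X$. This uses the observation of \S2 that $e_X\neq0$, so $\Gamma(X)$ is spanned by the unit. Hence $\Gamma(X)^\times=\kk^\times e_X$. For $u=\lambda e_X$ we have $u^{-1}=\lambda^{-1}e_X$. Unitality and bilinearity then give $c_u=\lambda\lambda^{-1}\id_X=\id_X$. So inner conjugacy of subobjects reduces to $i=i'\circ h$, which is exactly equivalence over $X$. Therefore the inner-conjugacy classes coincide with equivalence classes over $X$, and there are finitely many of them.

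\textbf{Step 3: the Frobenius data.} One should check that counting equivalence classes of underlying algebra subobjects really counts Frobenius subalgebras. By Lemma~\ref{lem:frobenius-unique}, a fixed algebra embedding carries at most one compatible Frobenius structure. So distinct Frobenius subalgebras over $X$ are not lost or merged beyond the classes of their algebra subobjects.

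\textbf{Main obstacle.} The only delicate point is the one already absorbed into Corollary~\ref{cor:frobenius-inner}: the ambient conjugation need not preserve the counit. In the connected case this issue disappears entirely, because $c_u=\id_X$. Beyond that, I expect no real difficulty, since the corollary is a direct specialization.
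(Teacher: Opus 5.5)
Your proposal is correct and follows essentially the same argument as the paper. The paper also deduces the statement from Corollary~\ref{cor:frobenius-inner}, uses connectedness to make inner conjugation trivial, and uses Lemma~\ref{lem:frobenius-unique} to identify equivalence of the underlying algebra embeddings with equivalence of Frobenius subalgebras over $X$.
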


\begin{proof}
Connectedness makes inner conjugation trivial, and
Lemma~\ref{lem:frobenius-unique} identifies equivalence of underlying
algebra embeddings with equivalence of Frobenius subalgebras over $X$.
The conclusion follows from Corollary~\ref{cor:frobenius-inner}.
\end{proof}

\subsection{The quadratic exchange locus}

For the construction of the exchange correspondence, assume first that
$\C$ is Karoubian; no semisimplicity, simple-unit, or finite-length
hypothesis is needed.  Idempotent completion will remove Karoubianity
from the numerical bound.
Let $X=(X,m_X,e_X,\delta_X,\epsilon_X)$ be a Frobenius algebra.
For $f\in\End_{\C}(X)$, write $f^*$ for its dual with respect to the
Frobenius self-duality.  The equations
\[
 b^*=b,\qquad b\circ e_X=e_X
\]
cut out the affine space
\begin{equation}\label{eq:affine-self-dual}
 \mathscr A_X
 =\id_X+\{h\in\End_{\C}(X):h^*=h,\ h\circ e_X=0\}.
\end{equation}
Write
\begin{equation}\label{eq:rX}
 r_X:=\dim_{\kk}\{h\in\End_{\C}(X):h^*=h,\ h\circ e_X=0\}.
\end{equation}
Inside $\mathscr A_X$, consider the equations
\begin{equation}\label{eq:exchange-locus}
\begin{aligned}
 b\circ b&=b,\\
 b\circ m_X\circ(b\otimes\id_X)&=m_X\circ(b\otimes b),\\
 b\circ m_X\circ(\id_X\otimes b)&=m_X\circ(b\otimes b).
\end{aligned}
\end{equation}
The last two are the \emph{exchange relations}.  We call their reduced
common zero locus, together with idempotence, the \emph{exchange locus}
$\Exch(X)$.  Its defining equations inside $\mathscr A_X$ have degree at
most two.

The description of intermediate subfactors by suitable projections goes
back to Bisch \cite{Bisch}; the exchange-relation characterization is given
in \cite{BischJones}.  Exchange-relation planar algebras
were further developed in \cite{Landau}, while the categorical
correspondence needed here is proved in \cite[Theorem~4.1]{GhoshPalcouxExchange}.

\begin{theorem}[Ghosh--Palcoux]\label{thm:exchange-correspondence}
The points of $\Exch(X)$ are naturally in bijection with equivalence
classes over $X$ of Frobenius subalgebras of $X$.  A Frobenius-subalgebra
embedding $i:B\to X$ corresponds to $b=i\circ i^*$.
\end{theorem}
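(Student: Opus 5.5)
The plan is to construct the two maps explicitly and check that they are mutually inverse, working throughout with the Frobenius self-duality \eqref{eq:snake-identities} and the dual \eqref{eq:dual-morphism}.

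\textbf{From subalgebras to points.} Let $i:B\to X$ be a Frobenius-subalgebra embedding and put $b=i\circ i^*$.

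First, $b^*=i^{**}\circ i^*=i\circ i^*=b$, using that dualization reverses composition and $i^{**}=i$ from \eqref{eq:gp-conditions}. Idempotence follows from $i^*\circ i=\id_B$.

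For the unit, we need $i^*\circ e_X=e_B$, so that $b\circ e_X=i\circ e_B=e_X$. To get this, pair both sides with $B$ through $\ev_B$. By \eqref{eq:dual-morphism}, $\ev_B\circ(i^*e_X\otimes\id_B)=\ev_X\circ(e_X\otimes i)=\epsilon_X\circ i$. The restricted-pairing identity \eqref{eq:restricted-frobenius-pairing} gives $\epsilon_B=\ev_B(e_B\otimes-)=\epsilon_X\circ i$. Nondegeneracy of $\ev_B$ then yields $i^*e_X=e_B$.

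For the exchange relations, the key identity is the bimodule property
\[
i^*\circ m_X\circ(i\otimes\id_X)=m_B\circ(\id_B\otimes i^*),
\]
and its mirror image. I would prove it by pairing with $\ev_B$, applying associativity of $\ev_X$ (namely $\ev_X\circ(m_X\otimes\id)=\ev_X\circ(\id\otimes m_X)$), and using multiplicativity of $i$. Composing on the left with $i$ and on the right with $i\otimes\id$ or $\id\otimes i$ then gives the two relations in \eqref{eq:exchange-locus}.

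Finally, if $i'=i\circ h$ with $h$ an algebra isomorphism compatible with the Frobenius structures, then $i'{}^*=h^{-1}\circ i^*$, so $b$ depends only on the equivalence class. Lemma~\ref{lem:frobenius-unique} ensures the Frobenius structure on $B$ is determined by $i$.

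\textbf{From points to subalgebras.} Given $b\in\Exch(X)$, use Karoubianity to split it as $b=i\circ\pi$ with $\pi\circ i=\id_B$. Define
\[
m_B=\pi\circ m_X\circ(i\otimes i),\qquad e_B=\pi\circ e_X .
\]

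The exchange relations give $b\circ m_X\circ(b\otimes b)=m_X\circ(b\otimes b)$. From this, $i\circ m_B=m_X\circ(i\otimes i)$, so $i$ is multiplicative, and associativity of $m_B$ follows. The relation $b\circ e_X=e_X$ gives $i\circ e_B=e_X$, hence unitality.

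For the Frobenius structure, set $\ev_B=\ev_X\circ(i\otimes i)$ and $\coev_B=(\pi\otimes\pi)\circ\coev_X$ (or a variant using $b^*=b$). Define $\delta_B,\epsilon_B$ by \eqref{eq:comultiplication-from-coevaluation}. Then check the zig-zag identities for $\ev_B,\coev_B$, and check that $i^*=\pi$, so that $i^*i=\id_B$ and $i^{**}=i$.

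\textbf{Main obstacle.} I expect the hardest step to be verifying that $(\ev_B,\coev_B)$ satisfies the zig-zag identities. This is where self-duality $b=b^*$ is essential. The dual of $\pi$ relative to $\ev_B$ must come out equal to $i$, which amounts to $\ev_X\circ(b\otimes\id)=\ev_X\circ(\id\otimes b)$. I would first establish $\pi^*=i$ and $i^*=\pi$ directly from $b^*=b$ and the splitting, and then derive the zig-zags formally from \eqref{eq:snake-identities}.

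\textbf{Bijection.} The two constructions are mutually inverse. Starting from a point $b$, split it, build the subalgebra, and return $i\circ i^*=i\circ\pi=b$. Starting from an embedding $i$, the splitting of $i\circ i^*$ is unique up to a unique isomorphism, and that isomorphism is an algebra isomorphism over $X$.
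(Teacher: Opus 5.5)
Your proof is correct. Its final ``Bijection'' paragraph is essentially the paper's own argument; the difference lies in everything before it. The paper does not verify either existence direction. It cites \cite[Theorem~4.1]{GhoshPalcouxExchange} both for $i\circ i^*\in\Exch(X)$ and for the fact that a splitting of an exchange idempotent is a Frobenius subalgebra. It then only records three things: the reconstruction formulas, the canonical isomorphism between two splittings (an algebra isomorphism over $X$), and Lemma~\ref{lem:frobenius-unique}.

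You instead prove both directions directly, which buys a self-contained correspondence and shows where each hypothesis enters:
\begin{itemize}
\item $i^{**}=i$ gives $b^*=b$ and the left $B$-linearity of $i^*$.
\item In the converse direction, $b^*=b$ alone yields the zig-zag identities for $\ev_X\circ(i\otimes i)$ and $(\pi\otimes\pi)\circ\coev_X$, together with $i^*=\pi$ and $i^{**}=i$.
\item Only the closure identity $b\circ m_X\circ(b\otimes b)=m_X\circ(b\otimes b)$, derived from one exchange relation, is used.
\end{itemize}
Consequently, given self-duality, idempotence and $b\circ e_X=e_X$, either exchange relation alone cuts out the same set. This is consistent with Remark~\ref{rem:why-base-two}. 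The paper's route is shorter but treats these verifications as a black box.

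Four details should be tightened.
\begin{itemize}
\item The unit identity needs no pairing argument: $i^*\circ e_X=i^*\circ i\circ e_B=e_B$.
\item To pass from the linearity identities to the exchange relations, compose with $i$ on the left and precompose with $i^*\otimes\id_X$ (respectively $\id_X\otimes i^*$), not with $i\otimes\id_X$.
\item Right linearity, $i^*\circ m_X\circ(\id_X\otimes i)=m_B\circ(i^*\otimes\id_B)$, follows by pairing on the right via \eqref{eq:dual-morphism}. Left linearity instead needs pairing on the left with $\ev_B\circ(\id_B\otimes i^*)=\ev_X\circ(i\otimes\id_X)$, which is exactly $i^{**}=i$. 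Associativity and multiplicativity alone do not give it, since a Nakayama twist obstructs it for non-symmetric Frobenius forms.
\item In the reconstruction, record that $\ev_B$ is invariant, $\ev_B\circ(m_B\otimes\id_B)=\ev_B\circ(\id_B\otimes m_B)$, which follows from multiplicativity of $i$ and invariance of $\ev_X$. This makes the two expressions in \eqref{eq:comultiplication-from-coevaluation} agree and yields the Frobenius axioms with $\epsilon_B=\epsilon_X\circ i$.
\end{itemize}
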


\begin{proof}
Existence in both directions is proved in
\cite[Theorem~4.1]{GhoshPalcouxExchange}.  For a splitting $b=i\circ p$,
$p\circ i=\id_B$, the reconstructed multiplication and unit are
\[
 m_B=p\circ m_X\circ(i\otimes i),\qquad e_B=p\circ e_X.
\]
Two splittings of the same idempotent are canonically isomorphic over $X$;
these formulas make that isomorphism an algebra isomorphism.
Lemma~\ref{lem:frobenius-unique} makes the compatible Frobenius structure
unique, proving the asserted bijection.
\end{proof}

\subsection{Local rigidity with a varying source}

An ambient inner automorphism need not preserve the chosen Frobenius
counit.  Consequently we count the restriction of inner conjugacy of
underlying algebra subobjects, as in Corollary~\ref{cor:frobenius-inner}.
The following parameter space lets us establish local rigidity without
requiring an action on $\Exch(X)$.

For an object $B$, let $\mathsf P_B(X)$ be the reduced affine variety of
triples $(m,e,j)$ such that $(B,m,e)$ is an algebra object and
$j:(B,m,e)\to X$ is a unital algebra homomorphism.
The algebraic group
\[
 G_B\times\Gamma(X)^\times,
 \qquad G_B:=\Aut_{\C}(B),
\]
acts by
\begin{equation}\label{eq:joint-action}
 (g,u)\boldsymbol\cdot(m,e,j)
 =\bigl(g\circ m\circ(g^{-1}\otimes g^{-1}),\,
        g\circ e,\,c_u\circ j\circ g^{-1}\bigr).
\end{equation}

These coupled source-and-map equations are a categorical version of the
low-degree morphism-deformation equations of
\cite{GerstenhaberSchack83,GerstenhaberSchack85}, with the restricted
acting group $\Aut_{\C}(B)\times\Gamma(X)^\times$.

\begin{lemma}[Open orbits for a varying source]\label{lem:joint-open-orbit}
If $A=(B,m,e)$ is separable, the orbit of $(m,e,j)$ is open in
$\mathsf P_B(X)$.
\end{lemma}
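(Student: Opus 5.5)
The plan is to apply Lemma~\ref{lem:open-orbit} to the action \eqref{eq:joint-action} of $G_B\times\Gamma(X)^\times$ on $\mathsf P_B(X)$. It therefore suffices to show that the differential of the orbit map at the identity $(\id_B,e_X)$ surjects onto the Zariski tangent space $T_{(m,e,j)}\mathsf P_B(X)$. Before that, I will record that the action is algebraic, since $g^{-1}$ and $u^{-1}$ are regular, and that it preserves $\mathsf P_B(X)$: transport of structure composed with the map $c_u\circ j\circ g^{-1}$ is again a unital homomorphism.

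\emph{Linearization.} A tangent vector $(\varphi,v,\xi)$ satisfies the linearized equations \eqref{eq:linearized-assoc}, \eqref{eq:linearized-left-unit} and \eqref{eq:linearized-right-unit} for $(\varphi,v)$. It also satisfies the linearized homomorphism equations, now with the source varying:
\[
 \xi\circ e+j\circ v=0,\qquad
 \xi\circ m+j\circ\varphi=m_X\circ(\xi\otimes j)+m_X\circ(j\otimes\xi).
\]
The orbit differential sends $(f,z)\in\End_{\C}(B)\oplus\Gamma(X)$ to
\[
 \bigl(-d^1f,\ f\circ e,\ -d^0z-j\circ f\bigr),
\]
where the first two entries come from \eqref{eq:orbit-differential-alg}, the third from \eqref{eq:orbit-differential-hom}, and the term $-j\circ f$ comes from differentiating $g^{-1}$.

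\emph{Killing the source component.} Since $A$ is separable, the computation preceding \eqref{eq:full-tangent-alg} yields $h\in\End_{\C}(B)$ with $\varphi=d^1h$ and $h\circ e=-v$. Take $f=-h$; this reproduces $(\varphi,v)$ in the first two components. Subtracting the image of $(-h,0)$ from the tangent vector leaves a vector $(0,0,\xi')$ with $\xi'=\xi-j\circ h$.

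\emph{Main step.} I must show that $(0,0,\xi')$ is still tangent. This holds because the orbit differential lands in the tangent space and the tangent space is linear. The linearized equations at $(0,0,\xi')$ then say exactly that $\xi'\circ e=0$ and that $\xi'$ is a derivation in $Z^1(A,{}_jX_j)$. As a sanity check, I will verify this directly: $j\circ d^1h=d^1_{{}_jX_j}(j\circ h)$, using that $j$ is multiplicative, so the $j\circ\varphi$ term cancels. Proposition~\ref{prop:hochschild-vanishing} then gives $z$ with $\xi'=-d^0z$, which is the image of $(0,z)$.

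Combining the two steps, the differential is surjective, and Lemma~\ref{lem:open-orbit} yields openness of the orbit. The only delicate point I expect is keeping the sign conventions of $d^0$, $d^1$ and the $g^{-1}$ term consistent. Any sign error can be absorbed by replacing $h$ or $z$ with its negative, so the argument is robust.
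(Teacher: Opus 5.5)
Your proposal is correct and follows essentially the same route as the paper: you use \eqref{eq:full-tangent-alg} to absorb the source component through an $\End_{\C}(B)$ direction, observe that the residual vector $(0,0,\xi-j\circ h)$ is still tangent and therefore a derivation into ${}_jX_j$, make it inner by the degree-one contraction, and conclude with Lemma~\ref{lem:open-orbit}. Your explicit orbit-differential formula, including the $-j\circ f$ term coming from $g^{-1}$, and your direct check that $j\circ d^1h=d^1(j\circ h)$ spell out steps that the paper's proof leaves implicit.
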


\begin{proof}
Let $(\mu,\eta,v)\in T_{(m,e,j)}\mathsf P_B(X)$.
Projection to $\Alg(B)$ gives a tangent vector $(\mu,\eta)$.
By \eqref{eq:full-tangent-alg}, there is $z\in\End_{\C}(B)$ whose
infinitesimal change of source coordinates gives $(\mu,\eta)$.
Subtracting the tangent to \eqref{eq:joint-action} in the direction
$(z,0)$ leaves $(0,0,w)$, with $w=v+j\circ z$.
Linearizing the homomorphism identities now gives
\[
 w\circ e=0,\qquad
 w\circ m=m_X\circ(w\otimes j+j\otimes w).
\]
Thus $w$ is a derivation from $A$ to the $A$-bimodule $X$ defined by $j$.
The degree-one contraction makes it inner: for some
$a\in\Gamma(X)$,
\[
 w=m_X\circ(a\otimes j)-m_X\circ(j\otimes a).
\]
This is the infinitesimal conjugation direction $(0,a)$ in
\eqref{eq:joint-action}.  Hence the orbit-map differential is surjective
onto the tangent space of $\mathsf P_B(X)$.  Both factors of the group
are unit groups of finite-dimensional algebras and are smooth and
connected.  Lemma~\ref{lem:open-orbit} proves the assertion.
\end{proof}

The following chart uses the algebraic factor
$cb+(1-c)(1-b)$ in Kato's nearby-projection intertwiner \cite{Kato};
see also \cite[equation~(4)]{Simon}.

\begin{lemma}[A regular splitting chart]\label{lem:idempotent-chart}
Let $b\in\End_{\C}(X)$ be an idempotent with a splitting
$b=i\circ p$, $p\circ i=\id_B$.  For an idempotent $c$, put
\[
 t_c:=c\circ b+(\id_X-c)\circ(\id_X-b).
\]
On the Zariski-open neighborhood of $b$ where $t_c$ is invertible,
\[
 i_c:=t_c\circ i,\qquad p_c:=p\circ t_c^{-1}
\]
depend regularly on $c$ and satisfy
$p_c\circ i_c=\id_B$ and $i_c\circ p_c=c$.
\end{lemma}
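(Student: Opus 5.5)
The plan is to verify the two identities directly, prove that $t_c$ is invertible on a Zariski-open set containing $b$, and then read off regularity.

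First, the intertwining relations. For idempotents $b,c$ we compute
\[
 c\circ t_c=c\circ b,\qquad t_c\circ b=c\circ b .
\]
The first holds because $c(\id_X-c)=0$. The second holds because $(\id_X-b)b=0$ and $cb\,b=cb$. Hence
\[
 c\circ t_c=t_c\circ b,
\]
and the symmetric computation gives $(\id_X-c)\circ t_c=t_c\circ(\id_X-b)$. Wherever $t_c$ is invertible, this yields $c=t_c\circ b\circ t_c^{-1}$.

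Second, the two splitting identities follow at once. We have
\[
 p_c\circ i_c=p\circ t_c^{-1}\circ t_c\circ i=p\circ i=\id_B,
\]
and
\[
 i_c\circ p_c=t_c\circ i\circ p\circ t_c^{-1}=t_c\circ b\circ t_c^{-1}=c .
\]
This step is purely algebraic inside the finite-dimensional algebra $\End_{\C}(X)$.

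Third, openness and regularity. The map $c\mapsto t_c$ is polynomial (affine-quadratic) on $\End_{\C}(X)$. Invertibility of $t_c$ is the nonvanishing of $\det(L_{t_c})$, where $L$ denotes left multiplication in $\End_{\C}(X)$. This is the same device used for $\Gamma(X)^\times$ in \S\ref{sec:homomorphisms}: nonvanishing of the determinant is equivalent to two-sided invertibility, and $t^{-1}=L_t^{-1}(\id_X)$ is regular there.

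So $D(\det\circ L\circ t)$ is Zariski-open. It contains $b$, since
\[
 t_b=b+(\id_X-b)^2=\id_X .
\]
Intersecting with the locus of idempotents gives the required neighbourhood. On it, $i_c$ and $p_c$ are compositions of the fixed morphisms $i,p$ with regular functions of $c$, so they depend regularly on $c$.

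The only point needing care is that invertibility must be taken in $\End_{\C}(X)$, not as a linear map of some underlying space, since $\C$ is abstract. The determinant-of-left-multiplication argument handles exactly this.
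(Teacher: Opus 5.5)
Your proof is correct and follows the paper's argument. Both use the intertwining identity $c\circ t_c=c\circ b=t_c\circ b$, the normalization $t_b=\id_X$, and openness of invertibility with regularity of inversion in the finite-dimensional algebra $\End_{\C}(X)$ to get $c=t_c\circ b\circ t_c^{-1}$, from which both splitting identities follow; your determinant-of-left-multiplication device only makes explicit the paper's one-line remark about openness and regularity.
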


\begin{proof}
We have $t_b=\id_X$ and
\[
 c\circ t_c=c\circ b=t_c\circ b.
\]
Invertibility is an open condition in the finite-dimensional algebra
$\End_{\C}(X)$, and inversion is regular on its unit group.  Thus, on
this open neighborhood, $c=t_c\circ b\circ t_c^{-1}$, which gives both
splitting identities.
\end{proof}

\begin{lemma}[Open inner-conjugacy classes in the exchange locus]
\label{lem:exchange-open-classes}
Every point of $\Exch(X)$ corresponding to a separable Frobenius
subalgebra has an open neighborhood consisting of points in the same
inner-conjugacy class of underlying algebra subobjects.
\end{lemma}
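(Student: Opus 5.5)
The idea is to transfer the open-orbit statement of Lemma~\ref{lem:joint-open-orbit} from the parameter space $\mathsf P_B(X)$ to $\Exch(X)$ through the regular splitting chart of Lemma~\ref{lem:idempotent-chart}. Fix $b_0\in\Exch(X)$ corresponding, by Theorem~\ref{thm:exchange-correspondence}, to a separable Frobenius subalgebra $i:B\to X$, with $b_0=i\circ p$ and $p=i^*$. On the Zariski-open neighborhood $U\subseteq\Exch(X)$ of $b_0$ where $t_c$ is invertible, define a regular map
\[
 \Phi:U\longrightarrow\mathsf P_B(X),\qquad
 c\longmapsto\bigl(p_c\circ m_X\circ(i_c\otimes i_c),\ p_c\circ e_X,\ i_c\bigr).
\]

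The first step is to check that $\Phi$ really lands in $\mathsf P_B(X)$. Since $i_c\circ p_c=c$, $p_c\circ i_c=\id_B$, and $c$ satisfies the exchange relations and $c\circ e_X=e_X$, the reconstruction formulas in the proof of Theorem~\ref{thm:exchange-correspondence} give an algebra $(B,m_c,e_c)$. This is exactly the reconstruction of the Frobenius subalgebra attached to $c$, transported along the splitting $(i_c,p_c)$. Moreover $i_c$ is a unital algebra monomorphism: it is split mono, and it is multiplicative because $c\circ m_X\circ(c\otimes c)=m_X\circ(c\otimes c)$, which follows from the exchange relations. Concretely,
\[
 i_c\circ m_c=c\circ m_X\circ(i_c\otimes i_c)=m_X\circ(i_c\otimes i_c),
\]
using $c\circ i_c=i_c$. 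Also $\Phi(b_0)=(m_B,e_B,i)$, since $t_{b_0}=\id_X$. Regularity of $\Phi$ follows from Lemma~\ref{lem:idempotent-chart}.

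The second step uses Lemma~\ref{lem:joint-open-orbit}. The orbit $O$ of $\Phi(b_0)$ under $G_B\times\Gamma(X)^\times$ is open in $\mathsf P_B(X)$, so $U':=\Phi^{-1}(O)$ is an open neighborhood of $b_0$ in $\Exch(X)$. For $c\in U'$ we can write $\Phi(c)=(g,u)\boldsymbol\cdot(m_B,e_B,i)$. Then $g:(B,m_B,e_B)\to(B,m_c,e_c)$ is an algebra isomorphism and $i_c=c_u\circ i\circ g^{-1}$. Hence the underlying algebra subobject $(B_c,i_c)$ attached to $c$ satisfies $c_u\circ i=i_c\circ g$, which is exactly inner conjugacy with $(B,i)$ in the sense of \S2. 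Finally, $(B,m_c,e_c,i_c)$ is equivalent over $X$ to the subalgebra that Theorem~\ref{thm:exchange-correspondence} attaches to $c$, because both split the same idempotent $c$ and the canonical comparison of splittings is an algebra isomorphism. So every $c\in U'$ lies in the inner-conjugacy class of $b_0$.

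I expect the main obstacle to be the first step, specifically showing that the point $c$ of the reduced locus really determines a Frobenius subalgebra whose underlying algebra subobject is $(B,m_c,e_c,i_c)$. This uses the Ghosh--Palcoux correspondence, applied here with a possibly non-canonical splitting $(i_c,p_c)$ rather than $(i',i'^*)$. I would handle it by noting that Theorem~\ref{thm:exchange-correspondence} only requires some splitting, and that any two splittings are canonically isomorphic. A secondary point is that separability of $B_c$ is not needed for this lemma: it is inherited automatically, since $(B,m_c,e_c)$ is isomorphic to $(B,m_B,e_B)$ via $g$.
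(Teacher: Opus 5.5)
Your proposal is correct and takes the same approach as the paper: the splitting chart of Lemma~\ref{lem:idempotent-chart} with $p=i^*$ gives a regular map $c\mapsto(m_c,e_c,i_c)$ into $\mathsf P_B(X)$, and the inverse image of the open orbit from Lemma~\ref{lem:joint-open-orbit} is the required neighborhood of inner-conjugate points. Your direct check that $i_c$ is a split-mono unital algebra map (via $c\circ m_X\circ(c\otimes c)=m_X\circ(c\otimes c)$), together with your comparison of the two splittings of $c$, supplies details that the paper leaves implicit in its appeal to the exchange correspondence.
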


\begin{proof}
Let $b=i\circ i^*$ correspond to a separable Frobenius subalgebra on
$B$.  Apply Lemma~\ref{lem:idempotent-chart} with $p=i^*$.
For $c\in\Exch(X)$ in this chart, put
\begin{equation}\label{eq:source-varying}
 e_c:=p_c\circ e_X,\qquad
 m_c:=p_c\circ m_X\circ(i_c\otimes i_c).
\end{equation}
The exchange correspondence makes $(m_c,e_c,i_c)$ a point of
$\mathsf P_B(X)$, and these formulas give a regular map from the chart
to that variety.  By Lemma~\ref{lem:joint-open-orbit}, the orbit of
$(m_b,e_b,i_b)$ is open.  Its inverse image is therefore an open
neighborhood of $b$.  If $c$ lies in this inverse image,
\eqref{eq:joint-action} supplies a source algebra isomorphism and an
ambient inner automorphism identifying the underlying algebra subobjects
at $b$ and $c$.  In particular, the source at $c$ is again separable.
\end{proof}

\begin{corollary}[Separable exchange idempotents are isolated]
\label{lem:exchange-isolated}
If $X$ is connected, every point of $\Exch(X)$ corresponding to a
separable Frobenius subalgebra is isolated.
\end{corollary}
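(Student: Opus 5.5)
The plan is to derive the corollary directly from Lemma~\ref{lem:exchange-open-classes}.

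\emph{Step 1.} Let $b\in\Exch(X)$ correspond to a separable Frobenius subalgebra $i:B\to X$, with $b=i\circ i^*$. Lemma~\ref{lem:exchange-open-classes} gives an open neighbourhood $U$ of $b$ in $\Exch(X)$. Every $c\in U$ corresponds to an algebra subobject that is inner conjugate to $(B,i)$.

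\emph{Step 2.} Since $X$ is connected, $\Gamma(X)=\kk e_X$. Hence every invertible $u$ is a scalar multiple $\lambda e_X$ with $\lambda\ne0$. Then
\[
 c_u=\lambda\lambda^{-1}\,m_X\circ(m_X\otimes\id_X)\circ(e_X\otimes\id_X\otimes e_X)=\id_X .
\]
So inner conjugacy reduces to equivalence over $X$. Concretely, for $c\in U$ the joint action \eqref{eq:joint-action} supplies $g\in G_B$ with $i_c=i_b\circ g^{-1}$ and $(m_c,e_c)=g\boldsymbol\cdot(m_b,e_b)$. Thus $g^{-1}$ is an algebra isomorphism from the source at $c$ to the source at $b$ intertwining the embeddings.

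\emph{Step 3.} Equivalent algebra subobjects give the same point of $\Exch(X)$, by the bijection in Theorem~\ref{thm:exchange-correspondence}. By Lemma~\ref{lem:frobenius-unique}, the Frobenius structure is determined by the algebra embedding, so the equivalence of underlying algebra embeddings is an equivalence of Frobenius subalgebras. One can also see this directly. The chart of Lemma~\ref{lem:idempotent-chart} gives $i_c\circ p_c=c$, and $p_c$ is recovered as the Frobenius dual $i_c^*$ only up to the identification $g$. A cleaner route is that $c$ is determined by the equivalence class of $(B,m_c,e_c,i_c)$, and $b$ by that of $(B,m_b,e_b,i_b)$. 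These classes coincide, so $c=b$. Therefore $U=\{b\}$, and $b$ is isolated.

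\emph{Main obstacle.} The delicate point is Step 3. There one must check that the equivalence over $X$ supplied by the orbit is compatible with the correspondence $b=i\circ i^*$, given that the chart uses $p_c=p\circ t_c^{-1}$ rather than $i_c^*$. I would handle this via Lemma~\ref{lem:frobenius-unique}. The Frobenius structure making $i_c$ a Frobenius embedding is unique, so equivalent embeddings yield the same idempotent $i_c\circ i_c^*$. Since $c$ itself is the exchange idempotent of the subalgebra $i_c$, this gives $c=i_b\circ i_b^*=b$.
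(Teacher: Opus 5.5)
Your proposal is correct and follows the paper's own proof: connectedness makes inner conjugation trivial, so the open neighbourhood from Lemma~\ref{lem:exchange-open-classes} consists of points whose algebra subobjects are equivalent over $X$ to the one at $b$. The exchange correspondence together with Lemma~\ref{lem:frobenius-unique} then collapses that neighbourhood to $\{b\}$. Your worry about $p_c$ versus $i_c^*$ goes away: as in the proof of Theorem~\ref{thm:exchange-correspondence}, transporting the Frobenius structure along the canonical isomorphism of splittings gives exactly $p_c=i_c^*$.
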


\begin{proof}
Connectedness makes inner conjugation trivial.  By the exchange
correspondence and Lemma~\ref{lem:frobenius-unique}, each equivalence
class over $X$ corresponds to a single point.  The neighborhood in
Lemma~\ref{lem:exchange-open-classes} is therefore that single point.
\end{proof}

\subsection{The effective bound}

\begin{theorem}[Quadratic exchange bound]\label{thm:exchange-bound}
Let $\C$ be a Hom-finite $\kk$-linear monoidal category, and let $X$
be a Frobenius algebra in $\C$.
Let $N_F^{\mathrm{inn}}(X)$ be the number of classes of separable
Frobenius subalgebras under inner conjugacy of their underlying algebra
subobjects.  Then
\begin{align}
 N_F^{\mathrm{inn}}(X)&\leq2^{r_X},\label{eq:exchange-bound}\\
 2^{r_X}&\leq2^{\dim_{\kk}\End_{\C}(X)-1}\qquad(X\ne0).
 \label{eq:exchange-coarse}
\end{align}
For $X=0$, the first bound reads $N_F^{\mathrm{inn}}(0)=1=2^{r_0}$.
If $X$ is connected, the same bound applies to the number $N_F(X)$ of
equivalence classes over $X$.  If, in addition, $\C$ is semisimple with
simple tensor unit and $X$ has the decomposition
\eqref{eq:target-decomposition}, then
\begin{equation}\label{eq:exchange-multiplicity}
 N_F(X)\leq2^{\sum_{i=1}^t n_i^2}.
\end{equation}
\end{theorem}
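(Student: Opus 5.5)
First I reduce to the Karoubian case.  Replace $\C$ by its idempotent completion $\widehat{\C}$, which is again Hom-finite, $\kk$-linear and monoidal.  The inclusion $\C\to\widehat{\C}$ is fully faithful and monoidal, so $\End(X)$, the Frobenius dual $*$, $e_X$ and hence $r_X$ are unchanged, and so is $\Gamma(X)$.  A separable Frobenius subalgebra $(B,i)$ in $\C$ stays one in $\widehat{\C}$: the section $s_B$ and the identities \eqref{eq:gp-conditions} transfer verbatim.  Two such subalgebras that are inner conjugate in $\widehat{\C}$ are already inner conjugate in $\C$, since the conjugating unit lies in $\Gamma(X)$ and the algebra isomorphism $h$ is a morphism between objects of $\C$, where the embedding is full.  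Thus $N_F^{\mathrm{inn}}$ can only grow, and it suffices to prove \eqref{eq:exchange-bound} when $\C$ is Karoubian.

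Now assume $\C$ is Karoubian.  By Theorem~\ref{thm:exchange-correspondence}, every separable Frobenius subalgebra gives a point $b=i\circ i^*$ of $\Exch(X)\subseteq\mathscr A_X$.  Let $U_\kappa\subseteq\Exch(X)$ be the set of points whose subalgebras lie in a fixed separable inner-conjugacy class $\kappa$.
\begin{itemize}
\item By Lemma~\ref{lem:exchange-open-classes}, each point of $U_\kappa$ has an open neighbourhood inside $U_\kappa$, so $U_\kappa$ is open.
\item The sets $U_\kappa$ for different classes $\kappa$ are pairwise disjoint.
\end{itemize}
The argument of Lemma~\ref{lem:degree-counting}(i) then applies without a group action: two nonempty disjoint open sets cannot meet the same irreducible component, since within an irreducible space any two nonempty opens intersect.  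Hence the number of classes is at most $|\Irr(\Exch(X))|\le\cdeg(\Exch(X))$.

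The locus $\Exch(X)$ sits inside the affine space $\mathscr A_X$ of dimension $r_X$ and is cut out by equations of degree at most two.  Theorem~\ref{thm:affine-bezout} with $n=r_X$ gives $\cdeg\le2^{r_X}$, which is \eqref{eq:exchange-bound}.  The main obstacle is this counting step with no group acting on $\Exch(X)$.  It is handled by the disjoint-open-sets form of the component count, with openness coming from Lemma~\ref{lem:exchange-open-classes}.

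The remaining claims follow quickly.
\begin{itemize}
\item \textbf{The case $X=0$.} Here $\End(X)=0$, so $r_0=0$, and the unique subobject is $0$.
\item \textbf{The bound \eqref{eq:exchange-coarse}.} For $X\ne0$ the unit $e_X$ is nonzero, since $e_X=0$ would force $\id_X=0$.  Evaluation $h\mapsto h\circ e_X$ takes $\id_X$ to $e_X\neq0$, so the linear condition $h\circ e_X=0$ cuts out a proper subspace.  Therefore $r_X\le\dim\End(X)-1$.
\item \textbf{Connected $X$.} Inner conjugacy is trivial, and by Lemma~\ref{lem:frobenius-unique} the classes are exactly the equivalence classes over $X$.
\end{itemize}

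For \eqref{eq:exchange-multiplicity}, $\C$ is semisimple with simple unit and $\one$ has multiplicity one in $X$, so
\[
 \End(X)\cong\kk\times\prod_{i=1}^t M_{n_i}(\kk),
\]
with $e_X$ spanning the $\one$-isotypic part.  The condition $h\circ e_X=0$ kills the $\kk$ factor, which leaves dimension $\sum_i n_i^2$.  Self-duality only cuts this down further, so $r_X\le\sum_i n_i^2$.
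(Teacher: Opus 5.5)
Your proof is correct and follows the paper's argument essentially step for step. The steps are the same: reduction to the Karoubi envelope via injectivity of inner-conjugacy classes, then the affine B\'ezout bound on the quadratic locus $\Exch(X)\subseteq\mathscr A_X$, then openness of each separable class from Lemma~\ref{lem:exchange-open-classes}, and finally the observation that disjoint nonempty open sets cannot meet a common irreducible component. The only cosmetic differences are in the last two estimates. You bound $r_X$ by the kernel of $h\mapsto h\circ e_X$ on all of $\End_{\C}(X)$ rather than on its self-dual part. In the semisimple case you compute that kernel directly as $\sum_i n_i^2$, instead of combining $\dim_{\kk}\End_{\C}(X)=1+\sum_i n_i^2$ with the coarse bound.
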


\begin{proof}
Passing to the Karoubi envelope preserves all spaces and operations
entering $r_X$, the algebra $\Gamma(X)$, and separability.  Its fully
faithful monoidal embedding also preserves the equivalence relation:
an isomorphism between original source objects in the envelope is
already a morphism of the original category.  Thus the original
inner-conjugacy classes inject into those in the envelope, and it
suffices to prove the bound when $\C$ is Karoubian.  If $X=0$, its
only algebra subobject is $0$ and $r_X=0$; hence assume $X\ne0$.

The locus $\Exch(X)$ is defined by quadratics in the affine space
$\mathscr A_X$ of dimension $r_X$.  Theorem~\ref{thm:affine-bezout} gives
\[
 \cdeg\bigl(\Exch(X)\bigr)\leq2^{r_X}.
\]
By Lemma~\ref{lem:exchange-open-classes}, every separable inner-conjugacy
class is a nonempty open subset of $\Exch(X)$: take the union of the
neighborhoods supplied at its points.  Distinct such classes are disjoint.
Two nonempty open subsets meeting the same irreducible component must
intersect.  Consequently the number of these classes is at most the
number of irreducible components, and hence at most
$\cdeg(\Exch(X))$.  This proves \eqref{eq:exchange-bound}.

Let $E_X^+:=\{h\in\End_{\C}(X):h^*=h\}$.  The linear map
\[
 E_X^+\longrightarrow\Hom_{\C}(\one,X),\qquad h\longmapsto h\circ e_X
\]
sends $\id_X$ to $e_X\ne0$ and therefore has rank at least one.
Its kernel has dimension $r_X$, so
$r_X\leq\dim_{\kk}E_X^+-1\leq\dim_{\kk}\End_{\C}(X)-1$.
This proves \eqref{eq:exchange-coarse} without connectedness or a
simple-unit hypothesis.  Connectedness identifies inner conjugacy with
equivalence over $X$.  Finally,
$\dim_{\kk}\End_{\C}(X)=1+\sum_i n_i^2$ under
\eqref{eq:target-decomposition}, proving
\eqref{eq:exchange-multiplicity}.
\end{proof}

\begin{remark}[Why the base is two]\label{rem:why-base-two}
The direct multiplication-closure equation
\[
 b\circ m_X\circ(b\otimes b)=m_X\circ(b\otimes b)
\]
has degree three in the coordinates of $b$.  Applying the degree bound
to that presentation gives a base-three estimate.  Frobenius self-duality
and the exchange correspondence provide the quadratic presentation
\eqref{eq:exchange-locus}, yielding the base-two estimate.
\end{remark}

\begin{remark}
The general bound \eqref{eq:subobject-bound} also bounds separable
Frobenius subalgebras up to inner conjugacy, by forgetting the compatible
Frobenius structure and using Lemma~\ref{lem:frobenius-unique}.
The exchange bound eliminates the source multiplication and embedding
variables from the degree calculation.  The joint parameter space is
used only to prove local openness; its dimension does not enter the
exponent.  No hypothesis on the number of subobject isomorphism classes
is needed for the Frobenius bound.
\end{remark}
\section{Application to Hopf algebras and finite groups}\label{sec:hopf}

We apply the exchange bound to the regular left comodule of a
finite-dimensional semisimple Hopf algebra, and then to function algebras
on finite homogeneous spaces.  Finite dimensionality makes the relevant
endomorphism spaces finite-dimensional; no unrestricted infinite-dimensional
extension is claimed.

\subsection{Semisimple Hopf algebras}

Let $H=(H,m_H,e_H,\delta_H,\epsilon_H,S)$ be a finite-dimensional
semisimple Hopf algebra over $\CC$.  Here $\Corep(H)$ denotes the
category of finite-dimensional left $H$-comodules.  A left coideal
subalgebra is a unital
subalgebra $K\subseteq H$ satisfying
\[
 \delta_H(K)\subseteq H\otimes K.
\]

Such coideal subalgebras are semisimple as ordinary algebras; see
\cite{Skryabin} and \cite[Lemma~4.0.2]{Burciu}, as recalled in
\cite{EtingofWalton}.  For the exchange bound we need
separability in $\Corep(H)$, established below using the Frobenius-subalgebra
correspondence and Lemma~\ref{lem:positive-frobenius-separable}.

\begin{theorem}[{\cite[Theorem~11.11, Proposition~6.5, and the proof of
Corollary~11.24]{GhoshPalcouxLattices}}]
\label{thm:ghosh-palcoux-hopf}
The following statements hold.
\begin{enumerate}[label=\textup{(\roman*)}]
  \item The regular comodule $H$ carries a natural connected Frobenius
  algebra structure
  $(H,m_H,e_H,\delta_H^{\mathrm F},\epsilon_H^{\mathrm F})$ in
  $\Corep(H)$.  Here the superscript distinguishes its Frobenius coalgebra
  maps from the Hopf maps $\delta_H$ and $\epsilon_H$.
  \item A unital subalgebra $K\subseteq H$ is a left coideal subalgebra if
  and only if it is a unital algebra subobject of $H$ in $\Corep(H)$.
  \item These unital algebra subobjects are precisely the Frobenius
  subalgebras of $H$ in $\Corep(H)$.
  \item The category $\Corep(H)$ is an integral, hence pseudo-unitary,
  fusion category and is therefore positive.
\end{enumerate}
\end{theorem}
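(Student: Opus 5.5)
This theorem is cited from \cite{GhoshPalcouxLattices}, so the plan is to assemble its four parts from that source and from standard Hopf-algebra facts. We will check that the cited results use the conventions of \S\ref{sec:setting}: unital algebra subobjects are given by monomorphisms, and Frobenius subalgebras are given by \eqref{eq:gp-conditions}.

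For \textup{(i)}, the algebra object in $\Corep(H)$ is $H$ with the coaction $\delta_H$ and multiplication $m_H$. The multiplication is a comodule map because $\delta_H$ is an algebra map, and the unit is a comodule map because $\delta_H(1)=1\otimes1$.
\begin{itemize}
\item For connectedness, $\Hom_{\Corep(H)}(\one,H)$ is the space of coinvariants $\{x:\delta_H(x)=x\otimes1\}$. Applying $\epsilon_H\otimes\id$ gives $x=\epsilon_H(x)1$, so this space is $\CC1$.
\item For the Frobenius coalgebra, we take the counit $\epsilon_H^{\mathrm F}$ to be a suitably normalized integral $\lambda\in H^*$, so that it is a comodule map.
\item We then set $\delta_H^{\mathrm F}$ from the dual basis of the nondegenerate form $\lambda\circ m_H$, which is nondegenerate by semisimplicity via Larson--Radford.
\item Finally we check the Frobenius identity \eqref{eq:frobenius-equations} and comodule-linearity. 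This is exactly \cite[Proposition~6.5]{GhoshPalcouxLattices}.
\end{itemize}

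For \textup{(ii)}, a comodule subobject of the regular comodule is a subspace $K$ with $\delta_H(K)\subseteq H\otimes K$. Since $\Corep(H)$ is abelian, a monomorphism is the same as an injective comodule map. Requiring the inclusion to respect $m_H$ and $e_H$ says that $K$ is a unital subalgebra. This gives the equivalence.

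For \textup{(iii)}, we must show that every such inclusion $i$ satisfies $i^*\circ i=\id$ and $i^{**}=i$. The plan is to use semisimplicity of $K$ (Skryabin) and to show that the restricted form $\lambda|_{K\otimes K}$ is nondegenerate. Then $i^*$ is the $\lambda$-orthogonal projection onto $K$, which is a comodule map. The identity $i^*\circ i=\id$ follows directly. For $i^{**}=i$, we use that the Frobenius structure on $H$ is symmetric, since $S^2=\id$ in characteristic zero. This is the content of \cite[Theorem~11.11]{GhoshPalcouxLattices}.

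For \textup{(iv)}, $\Corep(H)$ is semisimple and rigid because $H$ is cosemisimple, again by Larson--Radford. Its Frobenius--Perron dimensions equal vector-space dimensions, which are integers, so the category is integral. It is pseudo-unitary by Etingof--Nikshych--Ostrik, and positivity is taken in the sense used in the proof of \cite[Corollary~11.24]{GhoshPalcouxLattices}.

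The main obstacle is part \textup{(iii)}, specifically the nondegeneracy of $\lambda$ restricted to an arbitrary left coideal subalgebra. My first approach would be to express $\lambda|_K$ as a positive multiple of a faithful trace on the semisimple algebra $K$, using the unitary (Kac algebra) structure available over $\CC$.
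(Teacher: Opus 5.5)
\textbf{The paper and your sketches of (i), (ii), (iv).} The paper gives no proof of its own here; it imports the statement from \cite{GhoshPalcouxLattices}, and you likewise ultimately defer to that source. Your sketches of \textup{(i)}, \textup{(ii)} and \textup{(iv)} are sound, up to two small slips:
\begin{enumerate}
\item For a left comodule the coinvariants are $\{x:\delta_H(x)=1\otimes x\}$. The conclusion is the same, using $\id\otimes\epsilon_H$.
\item Nondegeneracy of $\lambda\circ m_H$ holds for every finite-dimensional Hopf algebra (Larson--Sweedler), not because of semisimplicity.
\end{enumerate}

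\textbf{The gap.} The genuine problem is the one step you try to supply yourself in \textup{(iii)}: nondegeneracy of $\lambda$ on $K\otimes K$. Your route through a Kac algebra structure fails at two points:
\begin{enumerate}
\item It is not known that every semisimple Hopf algebra over $\CC$ admits a $C^*$-Hopf structure.
\item Even when $H$ is a Kac algebra, faithfulness of the positive trace gives $\lambda(k^*k)>0$. This controls the bilinear form on $K$ only if $K^*=K$. But $*$-closedness of left coideal subalgebras of $C^*$-Hopf algebras is exactly one of the open questions recalled in \S\ref{sec:scope}.
\end{enumerate}

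\textbf{The fix: no positivity is needed.} For semisimple $H$ over $\CC$, Larson--Radford shows that the integral $\lambda$ is a nonzero multiple of the regular character $h\mapsto\operatorname{Tr}(L_h)$; in particular $\lambda$ is a trace.
\begin{enumerate}
\item Since $K$ is semisimple, write $H\cong\bigoplus_W W^{\oplus m_W}$ as a left $K$-module, where $W$ runs over the simple $K$-modules.
\item Every $m_W\geq1$, because $K\subseteq H$ is a copy of the regular $K$-module.
\item Under the Wedderburn decomposition $K\cong\prod_W\End(W)$, the restricted form $\lambda(kk')$ is a nonzero multiple of $(k,k')\mapsto\sum_W m_W\operatorname{tr}_W(kk')$.
\item This form is block-diagonal, and each block is nondegenerate in characteristic zero.
\end{enumerate}
This weighted trace, with positive integer weights, is what your ``positive multiple of a faithful trace'' should be.

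With it, $\epsilon_K:=\lambda|_K$ makes $K$ a Frobenius algebra in $\Corep(H)$. The map $i^*$ is the projection onto $K$ along its $\lambda$-orthogonal complement. It is a comodule map because it is built from $\ev$ and $\coev$ in the rigid category. The identity $i^*\circ i=\id_K$ is then immediate, and your trace argument for $i^{**}=i$ goes through.
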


Ostrik \cite{Ostrik} proves a splitting criterion for connected algebra
objects with a nondegenerate Frobenius pairing and nonzero pivotal dimension in
a pivotal semisimple setting.  For the corresponding splitting result for
connected Frobenius algebras of nonzero pivotal dimension in the ribbon
setting, see also \cite{FRS}.  We prove the splitting conclusion below
without a semisimplicity assumption.

\begin{lemma}[Connected Frobenius algebras of nonzero pivotal dimension]
\label{lem:pivotal-frobenius-separable}
Let $\C$ be a $\kk$-linear pivotal monoidal category with pivotal structure
$\phi$.  A connected Frobenius algebra $A$ in $\C$ is separable whenever
its pivotal dimension $\dim_{\phi}(A)\in\End_{\C}(\one)$ is nonzero.
\end{lemma}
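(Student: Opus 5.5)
The plan is to produce an explicit separability morphism $p_A:\one\to A\otimes A$ satisfying \eqref{eq:separability-relations}, and then invoke Lemma~\ref{lem:separability-morphism}. The natural candidate is a rescaled Frobenius coevaluation. Put $\coev_A=\delta_A\circ e_A$. The Frobenius condition \eqref{eq:frobenius-equations}, precomposed with $e_A\otimes\id_A$ and $\id_A\otimes e_A$, gives
\[
 (m_A\otimes\id_A)\circ(\id_A\otimes\coev_A)=\delta_A=(\id_A\otimes m_A)\circ(\coev_A\otimes\id_A).
\]
So $\coev_A$ already satisfies the centrality relation, and so does any scalar multiple of it. It remains only to fix the normalization $m_A\circ p_A=e_A$.

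For this I would consider the morphism $m_A\circ\coev_A=m_A\circ\delta_A\circ e_A\in\Gamma(A)=\Hom_{\C}(\one,A)$. Connectedness gives $\Gamma(A)=\kk e_A$, so $m_A\circ\delta_A\circ e_A=\lambda e_A$ for a unique scalar $\lambda\in\kk$. The key step is to identify $\lambda$ with the pivotal dimension, or at least to show $\lambda\neq0$ whenever $\dim_\phi(A)\neq0$. The approach is to apply $\epsilon_A$, giving $\epsilon_A\circ m_A\circ\delta_A\circ e_A=\lambda\,\epsilon_A\circ e_A$, and to rewrite the left side as $\ev_A\circ\coev_A$. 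The morphisms $\ev_A,\coev_A$ exhibit $A$ as self-dual by \eqref{eq:snake-identities}. Composing this self-duality with the rigid duality of $\C$ and the pivotal structure $\phi$ expresses $\ev_A\circ\coev_A$ as a pivotal trace of an endomorphism $\psi$ of $A$; this $\psi$ comparing the Frobenius self-duality with the pivotal one I expect to be a Nakayama-type automorphism.

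Next I would show that $\epsilon_A\circ e_A\neq0$ and relate the pivotal trace of $\psi$ to $\dim_\phi(A)$ times a nonzero scalar. The first follows from nondegeneracy: if $\epsilon_A\circ e_A=0$, then, since $\Gamma(A)=\kk e_A$, the pairing $\ev_A\circ(e_A\otimes e_A)$ vanishes, and one should derive $\coev_A=0$, contradicting the zig-zag identities. A cleaner alternative is to compute $\dim_\phi(A)$ directly as $\epsilon_A\circ m_A\circ\delta_A\circ e_A$ up to the scalar by which the Frobenius duality differs from the pivotal one. Because $A$ is connected, the relevant comparison maps $\one\to A$ are multiples of $e_A$, and this is what lets the scalars be extracted.

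The main obstacle is this scalar comparison: showing $m_A\circ\delta_A\circ e_A=\lambda e_A$ with $\lambda$ a nonzero multiple of $\dim_\phi(A)$, without semisimplicity. I would first try the graphical computation $\epsilon_A\circ m_A\circ\delta_A\circ e_A=\mathrm{tr}_\phi(\id_A)\cdot c$, inserting the pivotal isomorphism between the Frobenius dual and the rigid dual. Once $\lambda\neq0$ is established, set $p_A:=\lambda^{-1}\coev_A$. This satisfies both relations in \eqref{eq:separability-relations}, so Lemma~\ref{lem:separability-morphism} shows that $A$ is separable.
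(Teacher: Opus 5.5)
\textbf{There is a genuine gap.} Your reduction is correct and ends where the paper's proof ends. $\coev_A$ is automatically central, and once $m_A\circ\delta_A\circ e_A=\lambda e_A$ with $\lambda\neq0$, the morphism $\lambda^{-1}\coev_A$ is a separability morphism (equivalently, $\lambda^{-1}\delta_A$ splits $m_A$). But the only nontrivial step, $\dim_\phi(A)\neq0\Rightarrow\lambda\neq0$, is left open, and the mechanisms you sketch for it are false in general.

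Transporting $\phi_A$ through the Frobenius identification $A^{**}\cong A$ does not give a scalar. It gives an algebra automorphism $\alpha$ of $A$, with $\dim_\phi(A)=\ev_A\circ(\alpha\otimes\id_A)\circ\coev_A$, whereas $\ev_A\circ\coev_A=\lambda\,(\epsilon_A\circ e_A)$ contains no $\alpha$. These quantities can be unrelated. Take the Clifford algebra $\kk\oplus\kk\xi$ with $\xi$ odd and $\xi^2=1$ in super vector spaces, $\operatorname{char}\kk\neq2$. Use the pivotal structure computing superdimension and the counit $\epsilon_A(1)=1$, $\epsilon_A(\xi)=0$. Then $\coev_A=1\otimes1+\xi\otimes\xi$, $\alpha$ is the parity automorphism, $\lambda=2$, and $\dim_\phi(A)=0$. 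So $\lambda$ is not a multiple of $\dim_\phi(A)$, and your proposed identity $\epsilon_A\circ m_A\circ\delta_A\circ e_A=\mathrm{tr}_\phi(\id_A)\cdot c$ fails.

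Your argument for $\epsilon_A\circ e_A\neq0$ from nondegeneracy alone also fails. The regular comodule of a non-cosemisimple finite-dimensional Hopf algebra (e.g. Sweedler's four-dimensional one) is a connected Frobenius algebra whose Frobenius counit is an integral vanishing at $1$. That nonvanishing has to come from the dimension hypothesis.

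The missing idea is to build $\alpha$ into the operator whose value on the unit is a scalar. Set $T:=m_A\circ(\alpha\otimes\id_A)\circ\delta_A$, and write $T\circ e_A=c\,e_A$ by connectedness.
\begin{enumerate}[label=\textup{(\alph*)}]
 \item Right linearity of $\delta_A$, i.e.\ $\delta_A=(\id_A\otimes m_A)\circ(\coev_A\otimes\id_A)$, gives $T=m_A\circ((T\circ e_A)\otimes\id_A)=c\,\id_A$.
 \item Left linearity of $\delta_A$ together with multiplicativity of $\alpha$ gives $T\circ m_A=m_A\circ(\alpha\otimes T)$. Inserting the unit yields $T=c\,\alpha$.
 \item Finally, $\dim_\phi(A)=\epsilon_A\circ T\circ e_A=c\,(\epsilon_A\circ e_A)$.
\end{enumerate}
Thus $\dim_\phi(A)\neq0$ forces $c\neq0$ (and also $\epsilon_A\circ e_A\neq0$). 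Hence $\alpha=\id_A$, so $m_A\circ\delta_A=c\,\id_A$ and your $\lambda$ equals $c\neq0$. From there your final step goes through unchanged.
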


\begin{proof}
Write $A=(A,m_A,e_A,\delta_A,\epsilon_A)$.  Using the Frobenius
self-duality \eqref{eq:frobenius-pairing-coeval}, identify $A^{**}$ with
$A$ and write $\alpha$ for $\phi_A$ under this identification.
Naturality and monoidality of $\phi$ make $\alpha$ an algebra
automorphism.  Set $T=m_A\circ(\alpha\otimes\id_A)\circ\delta_A$.
Connectedness gives $T\circ e_A=c e_A$ for some $c\in\kk$.
The Frobenius identities give
\[
 T=m_A\circ((T\circ e_A)\otimes\id_A)=c\,\id_A,
 \qquad
 T\circ m_A=m_A\circ(\alpha\otimes T).
\]
Inserting the unit in the second identity gives $T=c\alpha$.
By the definition of pivotal dimension,
\[
 \dim_{\phi}(A)
 =\ev_A\circ(\alpha\otimes\id_A)\circ\coev_A
 =\epsilon_A\circ T\circ e_A
 =c(\epsilon_A\circ e_A).
\]
Thus $c\ne0$, so $\alpha=\id_A$ and
$m_A\circ\delta_A=c\,\id_A$.  Since $\delta_A$ is an
$A$--$A$-bimodule morphism, $c^{-1}\delta_A$ splits $m_A$.
\end{proof}

Following \cite[Definition~6.3]{GhoshPalcouxLattices}, positivity means that
the category admits a pivotal structure $\phi$ for which the pivotal
dimension $\dim_{\phi}(Y)$ is positive for every nonzero object $Y$.

\Needspace{6\baselineskip}
\begin{lemma}[Connected Frobenius algebras in positive tensor categories]
\label{lem:positive-frobenius-separable}
Every connected Frobenius algebra in a positive tensor category is
separable.
\end{lemma}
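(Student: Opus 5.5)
The plan is to reduce to Lemma~\ref{lem:pivotal-frobenius-separable}.
\begin{enumerate}
 \item Let $A$ be a connected Frobenius algebra in a positive tensor category $\C$.
 \item By the definition recalled from \cite[Definition~6.3]{GhoshPalcouxLattices}, $\C$ carries a pivotal structure $\phi$ such that $\dim_\phi(Y)>0$ for every nonzero object $Y$.
 \item A tensor category in the sense of \cite{EGNO} is $\kk$-linear, rigid and abelian, and has $\End_{\C}(\one)=\kk$. So $\dim_\phi(Y)$ is a scalar, and ``positive'' makes sense in $\kk=\CC$ (or in whatever ordered setting the definition uses).
 \item Since $A$ is connected, $e_A\neq0$ (as recorded in \S\ref{sec:setting}), hence $A\neq0$.
 \item Positivity then gives $\dim_\phi(A)>0$, and in particular $\dim_\phi(A)\neq0$.
 \item Lemma~\ref{lem:pivotal-frobenius-separable} now applies directly. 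It yields $\alpha=\id_A$ and $m_A\circ\delta_A=c\,\id_A$ with $c\neq0$, so $c^{-1}\delta_A$ is an $A$--$A$-bimodule section of $m_A$, and $A$ is separable.
\end{enumerate}

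The main point to check is that the pivotal dimension in Lemma~\ref{lem:pivotal-frobenius-separable} is the same quantity whose positivity is assumed. That lemma defines $\dim_\phi(A)$ using the Frobenius self-duality $A\cong A^*$ rather than the chosen rigid dual. I will argue that the categorical trace of $\phi_A$ does not depend on the choice of dual data. Any two duals are related by a unique isomorphism compatible with evaluation and coevaluation, and transporting $\phi$ along it leaves $\ev\circ(\phi\otimes\id)\circ\coev$ unchanged. Hence the Frobenius-duality formula computes the standard $\dim_\phi(A)$.

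A secondary point is that Lemma~\ref{lem:pivotal-frobenius-separable} only needs the pivotal monoidal structure and not Hom-finiteness, and both hold in any tensor category. If ``positive'' were defined only for simple objects, I would add one step: write $A$ as a finite-length object and use additivity of $\dim_\phi$ on short exact sequences, which holds in a tensor category since the trace is additive. This shows that a nonzero $A$ has $\dim_\phi(A)$ equal to a nonempty sum of positive numbers, hence nonzero.
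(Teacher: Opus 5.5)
Your proposal is correct and follows the paper's proof exactly: connectedness forces $e_A\neq0$, hence $A\neq0$, so positivity gives $\dim_\phi(A)>0$, and Lemma~\ref{lem:pivotal-frobenius-separable} then yields separability. Your extra checks are sound but not needed here. The independence of the pivotal trace from the chosen dual data and the additivity fallback can be dropped, because the paper's definition of positivity already requires $\dim_\phi(Y)>0$ for every nonzero object $Y$.
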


\begin{proof}
For such an algebra $A$, connectedness gives $A\ne0$, and a positive
pivotal structure gives $\dim_{\phi}(A)>0$; apply
Lemma~\ref{lem:pivotal-frobenius-separable}.
\end{proof}

The ambient endomorphism dimension is particularly simple.

\begin{lemma}\label{lem:regular-endomorphisms}
For the regular left comodule,
\[
 \End_{\Corep(H)}(H)\cong H^*
\]
as vector spaces.  In particular,
$\dim_{\CC}\End_{\Corep(H)}(H)=\dim_{\CC}H$.
\end{lemma}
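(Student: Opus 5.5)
The plan is to identify comodule endomorphisms of the regular left comodule $H$ with linear functionals on $H$, via the standard map $\lambda\mapsto(\id_H\otimes\lambda)\circ\delta_H$, reading Sweedler-style as $x\mapsto x_{(1)}\lambda(x_{(2)})$. Define
\[
 \Phi:H^*\longrightarrow\End_{\Corep(H)}(H),\qquad
 \Phi(\lambda):=(\id_H\otimes\lambda)\circ\delta_H,
\]
and
\[
 \Psi:\End_{\Corep(H)}(H)\longrightarrow H^*,\qquad
 \Psi(f):=\epsilon_H\circ f .
\]
The proof then consists of three routine verifications, all relying only on the bialgebra axioms.

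\textbf{Step 1: $\Phi(\lambda)$ is a comodule map.} For a left comodule map $f$, the requirement is $\delta_H\circ f=(\id_H\otimes f)\circ\delta_H$. Coassociativity gives
\[
 \delta_H\circ\Phi(\lambda)
 =(\id_H\otimes\id_H\otimes\lambda)\circ(\delta_H\otimes\id_H)\circ\delta_H
 =(\id_H\otimes\id_H\otimes\lambda)\circ(\id_H\otimes\delta_H)\circ\delta_H,
\]
and the right-hand side equals $(\id_H\otimes\Phi(\lambda))\circ\delta_H$.

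\textbf{Step 2: $\Psi\circ\Phi=\id$.} We have $\epsilon_H(x_{(1)})\lambda(x_{(2)})=\lambda(x)$ by the counit axiom.

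\textbf{Step 3: $\Phi\circ\Psi=\id$.} Given a comodule map $f$, apply $\id_H\otimes\epsilon_H$ to the colinearity identity. The left side becomes $f$ by the counit axiom. The right side becomes
\[
 (\id_H\otimes\epsilon_H\circ f)\circ\delta_H=\Phi(\Psi(f)).
\]

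This shows that $\Phi$ is a linear isomorphism, and the dimension equality follows from $\dim_{\CC}H^*=\dim_{\CC}H$ by finite dimensionality.

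None of these steps is a genuine obstacle. The only point needing care is using the correct side convention for left comodules, so that the endomorphisms are of the form $(\id\otimes\lambda)\circ\delta_H$ and not $(\lambda\otimes\id)\circ\delta_H$; the latter would instead be right-comodule maps. Semisimplicity is not needed for this lemma.
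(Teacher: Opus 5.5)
Your proposal is correct and is essentially the paper's proof. The paper uses the same mutually inverse assignments $\varphi\mapsto(\id_H\otimes\varphi)\circ\delta_H$ and $T\mapsto\epsilon_H\circ T$, with the reconstruction $T=(\id_H\otimes(\epsilon_H\circ T))\circ\delta_H$ obtained as in your Step~3, and you simply spell out the coassociativity and counit checks that the paper leaves implicit.
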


\begin{proof}
Every $\varphi\in H^*$ defines a comodule endomorphism
$(\id_H\otimes\varphi)\circ\delta_H$.  Conversely, if $T$ is a comodule
endomorphism, then
\[
 T=(\id_H\otimes(\epsilon_H\circ T))\circ\delta_H.
\]
These assignments are inverse.
\end{proof}

\Needspace{8\baselineskip}
\begin{corollary}[Semisimple Hopf algebras]\label{cor:hopf-bound}
Let $H$ be a semisimple Hopf algebra over $\CC$ with
$\dim_{\CC}H=N$.  Then $H$ has at most $2^{N-1}$ left coideal subalgebras.
\end{corollary}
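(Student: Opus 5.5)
The plan is to apply Theorem~\ref{thm:exchange-bound} to the Frobenius algebra $X=H$ in $\C=\Corep(H)$, with $\kk=\CC$. We use its connected form, which bounds the number of equivalence classes over $X$ of separable Frobenius subalgebras by $2^{r_X}\leq 2^{\dim_{\CC}\End_{\C}(X)-1}$. Four inputs are needed: $\C$ must be Hom-finite, $H$ must be a connected Frobenius algebra, every left coideal subalgebra must be a separable Frobenius subalgebra, and distinct left coideal subalgebras must give distinct equivalence classes over $H$.

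First, $\Corep(H)$ is $\CC$-linear, and its morphism spaces are finite-dimensional because comodules are finite-dimensional. By Theorem~\ref{thm:ghosh-palcoux-hopf}(i), $H$ is a connected Frobenius algebra there. By parts (ii) and (iii) of the same theorem, left coideal subalgebras $K\subseteq H$ are exactly the unital algebra subobjects of $H$ in $\Corep(H)$, and these are exactly the Frobenius subalgebras.

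For separability, let $K$ be a left coideal subalgebra, with its induced Frobenius structure. A subalgebra of a connected algebra is connected, since $\Hom(\one,K)\hookrightarrow\Hom(\one,H)$ is injective and $e_K\neq0$. Since $\Corep(H)$ is positive by Theorem~\ref{thm:ghosh-palcoux-hopf}(iv), Lemma~\ref{lem:positive-frobenius-separable} then makes $K$ separable. The counting is injective because two subobjects that are equivalent over $H$ have the same image, since equivalence composes with an isomorphism $h$; so each subspace $K$ determines a single equivalence class. Hence the number of left coideal subalgebras is at most $N_F(H)$.

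Finally, Lemma~\ref{lem:regular-endomorphisms} gives $\dim_{\CC}\End_{\Corep(H)}(H)=N$, so \eqref{eq:exchange-coarse} (valid because $H\neq0$) yields the bound $2^{N-1}$. The main point requiring care is the separability step. It relies on the subalgebra being connected and on the Frobenius structure supplied by Theorem~\ref{thm:ghosh-palcoux-hopf}(iii), so that the positivity lemma applies inside the category rather than only as ordinary algebras. The injectivity of the passage from subspaces to equivalence classes is routine.
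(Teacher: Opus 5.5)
Your proposal is correct and follows the paper's proof essentially step for step. Connectedness of $H$ and positivity of $\Corep(H)$ come from Theorem~\ref{thm:ghosh-palcoux-hopf}, connectedness of each $K$ comes from the injection on $\Hom(\one,-)$, and separability comes from Lemma~\ref{lem:positive-frobenius-separable}. The bound then follows from Theorem~\ref{thm:exchange-bound} and Lemma~\ref{lem:regular-endomorphisms}. Your explicit checks of Hom-finiteness and of injectivity from subspaces to equivalence classes over $H$ are details the paper leaves implicit.
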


\begin{proof}
By Theorem~\ref{thm:ghosh-palcoux-hopf}\textup{(i),(iv)}, $H$ is a
connected Frobenius algebra in the positive tensor category $\Corep(H)$.
By Theorem~\ref{thm:ghosh-palcoux-hopf}\textup{(ii),(iii)}, every left
coideal subalgebra $K$ is a Frobenius subalgebra.  Its monomorphism
$i:K\to H$ injects $\Hom_{\Corep(H)}(\one,K)$ into
$\Hom_{\Corep(H)}(\one,H)=\CC e_H$, while the former space contains the
nonzero morphism $e_K$; hence $K$ is connected.  Lemma~\ref{lem:positive-frobenius-separable}
therefore makes every such $K$ separable.  Theorem~\ref{thm:exchange-bound}
and Lemma~\ref{lem:regular-endomorphisms} now give the stated bound.
\end{proof}

This is an effective strengthening of the qualitative finiteness theorem of
Etingof--Walton \cite[Theorem~3.6]{EtingofWalton}.
A distinct earlier rigidity-to-finiteness theorem is \c{S}tefan's finiteness
of isomorphism types of semisimple and cosemisimple Hopf algebras of fixed
dimension over $\CC$ \cite{Stefan}.

\subsection{Intermediate subgroups as a check}

The group case gives a transparent specialization.  Let $G$ be finite and
$H\leq G$.  The commutative Frobenius algebra
$X=\operatorname{Fun}(G/H)$ in $\Rep(G)$ is connected, and its
$G$-stable unital subalgebras correspond to intermediate subgroups
$H\leq K\leq G$.  By Frobenius reciprocity and the Mackey restriction
formula \cite[\S\S7.2--7.3]{SerreLinearRepresentations},
\begin{equation}\label{eq:double-coset-end}
 \dim_{\CC}\End_{\Rep(G)}\bigl(\operatorname{Fun}(G/H)\bigr)
 =|H\backslash G/H|.
\end{equation}

\begin{corollary}[Double-coset bound]\label{cor:double-coset}
For every finite group $G$ and subgroup $H\leq G$,
\begin{equation}\label{eq:double-coset-bound}
 \bigl|\{K:H\leq K\leq G\}\bigr|
 \leq 2^{|H\backslash G/H|-1}.
\end{equation}
\end{corollary}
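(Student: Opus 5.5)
The plan is to specialize Theorem~\ref{thm:exchange-bound} to the connected Frobenius algebra $X=\operatorname{Fun}(G/H)$ in $\Rep(G)$ over $\CC$, exactly as Corollary~\ref{cor:hopf-bound} does for Hopf algebras. The ingredients are:
\begin{enumerate}[label=\textup{(\arabic*)}]
 \item $X$ is connected, with a Frobenius structure whose counit is the normalized sum over $G/H$.
 \item Every $G$-stable unital subalgebra is a separable Frobenius subalgebra.
 \item The map $K\mapsto\operatorname{Fun}(G/K)$, pulled back along $G/H\to G/K$, is a bijection onto the $G$-stable unital subalgebras.
 \item The dimension count \eqref{eq:double-coset-end}.
\end{enumerate}
The bound then reads $2^{\dim\End(X)-1}=2^{|H\backslash G/H|-1}$.

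\textbf{Connectedness and Frobenius structure.} Connectedness holds because $\Hom_{\Rep(G)}(\one,X)=X^G$ consists of functions constant on the transitive $G$-set $G/H$. For the Frobenius structure I take the counit $\epsilon(f)=\sum_{x\in G/H}f(x)$ and the comultiplication $\delta(\mathbf 1_x)=\mathbf 1_x\otimes\mathbf 1_x$ on indicator functions. Both are $G$-equivariant, and the Frobenius identities \eqref{eq:frobenius-equations} are checked on the basis $\mathbf 1_x$.

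\textbf{Separability.} Since $\Rep(G)$ is a positive tensor category (with the trivial pivotal structure, dimensions are ordinary dimensions), Lemma~\ref{lem:positive-frobenius-separable} makes every connected Frobenius algebra separable. Alternatively, separability is elementary here: $p=\sum_x\mathbf 1_x\otimes\mathbf 1_x$ is $G$-invariant and satisfies \eqref{eq:separability-relations}, and the same works for any subalgebra $\operatorname{Fun}(G/K)$.

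\textbf{Subalgebras are Frobenius subalgebras.} A $G$-stable unital subalgebra $Y\subseteq X$ is a commutative semisimple algebra of functions. Hence it equals the algebra of functions constant on the blocks of a $G$-invariant partition of $G/H$. Invariant partitions of a transitive $G$-set correspond to intermediate subgroups $H\le K\le G$, with the block of $eH$ being $K/H$. So $Y=\operatorname{Fun}(G/K)$.

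I then need $Y$ to be a Frobenius subalgebra in the sense of \eqref{eq:gp-conditions}. Its inclusion $i$ has dual $i^*$ given by summing over fibers. With the unnormalized counit on $X$, $i^*\circ i$ is multiplication by $[K:H]$, so one must instead take the counit on $Y$ to be $[K:H]^{-1}$ times the sum. Then $i^*$ becomes fiberwise averaging, $i^*\circ i=\id$, and $i^{**}=i$ follows from symmetry of the pairing. Equivalently, $b=i\circ i^*$ is the averaging projection, and one checks directly that it lies in $\Exch(X)$.

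Since $X$ is connected, Theorem~\ref{thm:exchange-bound} bounds the number of these subalgebras by $2^{r_X}\le2^{\dim\End(X)-1}$. Distinct $K$ give distinct subalgebras, hence inequivalent ones over $X$. Finally, \eqref{eq:double-coset-end} follows from Frobenius reciprocity: $\End(\operatorname{Ind}_H^G\one)\cong\Hom_H(\one,\operatorname{Res}\operatorname{Ind}\one)$, whose dimension is the number of $H$-orbits on $G/H$, i.e.\ $|H\backslash G/H|$.

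The main obstacle is the Frobenius-subalgebra normalization, namely choosing the counit on $\operatorname{Fun}(G/K)$ so that $i^*i=\id$. I would verify this directly via the exchange idempotent $b$ (fiber averaging), checking idempotence and the exchange relations on indicator functions.
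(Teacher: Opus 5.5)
Your proposal follows the paper's proof. You specialize the coarse exchange bound \eqref{eq:exchange-coarse} to the connected Frobenius algebra $\operatorname{Fun}(G/H)$, and you identify $G$-stable unital subalgebras with $G$-invariant block systems, hence with intermediate subgroups. You then check that each pullback $i$ is a separable Frobenius-subalgebra embedding with $i^*$ given by fibrewise averaging, and compute $\dim\End(X)=|H\backslash G/H|$. Your count of $H$-orbits on $G/H$ agrees with the paper's count of $G$-orbits on $(G/H)\times(G/H)$. The only difference is that you keep the unnormalized counit on $X$, whereas the paper puts the averaged counit $|S|^{-1}\sum_{s\in S}$ on every homogeneous space $S$. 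Either choice works.

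There is one slip, in the step you flagged as the main obstacle: the correction factor is inverted. Scaling $\epsilon_Y$ by $\lambda$ scales $\ev_Y$ by $\lambda$, and hence scales $i^*$ by $\lambda^{-1}$. Starting from fibrewise summation, you therefore need $\lambda=[K:H]$, not $[K:H]^{-1}$. With your stated choice one gets $i^*\circ i=[K:H]^2\,\id$. In fact there is nothing to choose: \eqref{eq:restricted-frobenius-pairing} forces $\epsilon_Y=\epsilon_X\circ i$, which for your $\epsilon_X$ equals $[K:H]\sum_{y\in G/K}$.

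Your alternative avoids the normalization altogether and is correct. It checks directly that the fibrewise-averaging projection $b$ is self-dual, satisfies $b\circ e_X=e_X$, is idempotent, and satisfies the exchange relations. Combined with your separability morphism $\sum_x\mathbf 1_x\otimes\mathbf 1_x$, this completes the argument.
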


\begin{proof}
A $G$-stable subalgebra of $\operatorname{Fun}(G/H)$ is the algebra of
functions constant on the blocks of a $G$-invariant partition of $G/H$;
such partitions are precisely the quotient maps $G/H\to G/K$ with
$H\leq K\leq G$.  Equip each function algebra on a homogeneous space
$S$ with the averaged counit
$\epsilon_S(f)=|S|^{-1}\sum_{s\in S}f(s)$ and comultiplication
$\delta_S(p_s)=|S|p_s\otimes p_s$, where $p_s$ is the characteristic
function of $s$.  For pullback $j$ along $G/H\to G/K$, the Frobenius mate
$j^*$ is fibrewise averaging, so $j^*j=\id$; the symmetric pairings also
give $j^{**}=j$.  Thus $\operatorname{Fun}(G/K)$ is a separable Frobenius
subalgebra of $\operatorname{Fun}(G/H)$.  The endomorphism dimension in
\eqref{eq:double-coset-end} is the number of $G$-orbits on
$(G/H)\times(G/H)$, namely the number of double cosets.  Apply
\eqref{eq:exchange-coarse}.
\end{proof}

The same estimate also follows directly: every intermediate subgroup is a
union of $H$-double cosets containing $H$, leaving at most
$2^{|H\backslash G/H|-1}$ possibilities.  The categorical derivation is a
useful consistency check: the dimension appearing in the exponent becomes
this familiar permutation-theoretic invariant.

\begin{theorem}[Sharp finite-group interval bound; Theorem~1.1 and
Corollary~1.2 of \cite{PalcouxSharpIntervals}]
\label{thm:sharp-group-interval-bound}
Let $H\leq G$ be finite groups, put $n=[G:H]>1$, and let $p$ be the least
prime divisor of $n$.  Set
\[
 c(p):=
 \left(\prod_{j=1}^{\infty}(1-p^{-j})^{-1}\right)
 \left(\sum_{z\in\mathbb Z}p^{-z^2}\right).
\]
Then
\[
 \bigl|\{K:H\leq K\leq G\}\bigr|
 <c(p)n^{\frac14\log_p n}.
\]
In particular,
\[
 \bigl|\{K:H\leq K\leq G\}\bigr|
 <c(2)n^{\frac14\log_2 n}
 <7.371968802\,n^{\frac14\log_2 n}.
\]
The coefficient $\frac14$ in the exponent and, with this coefficient fixed,
the constant $c(p)$ are optimal among inclusions whose index has least
prime divisor $p$: for $H=1$ and $G=(\mathbb Z/p\mathbb Z)^{2a}$,
\[
 \frac{\bigl|\{K:1\leq K\leq G\}\bigr|}
 {|G|^{\frac14\log_p|G|}}\longrightarrow c(p)
 \qquad(a\longrightarrow\infty).
\]
\end{theorem}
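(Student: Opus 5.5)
The plan is to count intermediate subgroups $K$ by their minimal number of generators over $H$, in the spirit of the count of subspaces of $\mathbb F_p^m$ by Gaussian binomials. The extremal example $G=(\mathbb Z/p\mathbb Z)^{2a}$ shows that this is exactly what produces $c(p)$. None of the categorical machinery above is needed; the argument is purely group-theoretic. Write $x=\log_p n$.

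\textbf{Step 1 (generator sequences).} For $K$ with $H\leq K\leq G$, let $r=r(K)$ be the least $r$ such that $K=\langle H,g_1,\ldots,g_r\rangle$. Call a sequence $(g_1,\ldots,g_r)$ \emph{admissible for $K$} if it generates $K$ over $H$ and the chain $K_i:=\langle H,g_1,\ldots,g_i\rangle$ is strictly increasing. Each index $[K_i:K_{i-1}]$ is an integer $>1$ dividing $n$, hence at least $p$. Writing $d=[K:H]$ and $d_i=[K_i:H]$, this gives $d\geq p^r$ and $d_i\leq d\,p^{-(r-i)}$.

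\textbf{Step 2 (double counting).} Given $K$, count its admissible sequences of length $r(K)$ from below. At stage $i$, any $g_{i+1}\in K\setminus K_i$ extends the sequence. Some care is needed to ensure that every such extension can be completed to a generating sequence of length exactly $r$. If this is awkward, I would instead count all strictly increasing sequences of length $r$ inside $K$ that generate $K$, or work with cosets $gH$ rather than elements. Either way, the number of choices at stage $i$ should be at least $|H|(d-d_i)\geq |H|\,d\,(1-p^{-(r-i)})$. On the other side, the total number of length-$r$ sequences of cosets in $G/H$ is at most $n^r$. Dividing, the number of $K$ with $r(K)=r$ is at most
\[
 \frac{n^r}{d^r\prod_{j=1}^{r}(1-p^{-j})}
 \leq
 \Bigl(\prod_{j\geq1}(1-p^{-j})^{-1}\Bigr)\,p^{rx-r^2}.
\]
To avoid summing separately over $d$, I would count pairs (sequence, $K$) in which $K$ is the subgroup generated, and use $d\geq p^r$ uniformly.

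\textbf{Step 3 (summing over $r$).} Since $rx-r^2=x^2/4-(r-x/2)^2$, summing over $r\geq0$ gives
\[
 n^{\frac14\log_pn}\prod_{j\geq1}(1-p^{-j})^{-1}\sum_{r}p^{-(r-x/2)^2}.
\]
The shifted theta sum $\sum_{z\in\mathbb Z}p^{-(z+s)^2}$ is maximised at $s=0$; this follows from the Jacobi triple product, or from the Poisson-summation form having positive Fourier coefficients. Hence the sum is at most $\sum_z p^{-z^2}$, giving the bound $\leq c(p)\,n^{\frac14\log_p n}$. Strictness should come from the slack in $d\geq p^r$ or in the $r=0$ and $r$-maximal terms, using $n>1$.

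The numerical value $c(2)<7.371968802$ is a direct computation. For sharpness, with $H=1$ and $G=\mathbb F_p^{2a}$ the count is $\sum_r\binom{2a}{r}_p$. The standard asymptotic $\binom{2a}{a+z}_p\sim p^{a^2-z^2}\prod_j(1-p^{-j})^{-1}$ shows that the ratio tends to $c(p)$.

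The main obstacle is Step 2: making the lower bound on the number of admissible sequences per $K$ rigorous in a nonabelian group. Minimal generating sequences need not extend greedily, so I would first try the relaxed counting of all strictly increasing length-$r$ chains inside $K$, accepting a possibly weaker per-stage factor and checking that it still yields $(1-p^{-j})$.
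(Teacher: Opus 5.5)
Your Steps~1 and~3, the maximisation of the shifted theta sum, and the Gaussian-binomial sharpness computation are sound. The overall plan, counting relative generating tuples by index and then bounding a discrete Gaussian sum, is the route the paper attributes to \cite{PalcouxSharpIntervals}.

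The gap is Step~2, and it is not merely awkward. With the length fixed at $r=r(K)$, two of your claims are false: that every $g_{i+1}\in K\setminus K_i$ extends the sequence, and hence the lower bound $|H|^r d^r\prod_{j=1}^r(1-p^{-j})$ on admissible sequences. Take $H=1$ and $K=\mathbb Z/6$, inside any $G$ with $p=2$ (e.g.\ $G=K$). Here $r(K)=1$ and $d=6$, but only $\varphi(6)=2$ of the five elements of $K\setminus H$ generate $K$. Your count predicts at least $d(1-p^{-1})=3$. Elements such as $2$ or $3$ give a strict first step that needs a second step to reach $K$. Your proposed relaxation still fixes the length $r$, so it does not remove this problem.

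The missing idea is to stop fixing the length and instead weight strict chains of all lengths. Start from $K_0=H$, choose $g_{i+1}H$ uniformly among the $d-d_i$ cosets in $(K\setminus K_i)/H$, and stop when $K_\ell=K$. Each strictly increasing coset sequence $\mathbf g=(g_1H,\ldots,g_\ell H)$ with $K_\ell=K$ occurs with probability $\prod_{i=0}^{\ell-1}(d-d_i)^{-1}$, and these probabilities sum to $1$. Let $S_\ell(K)$ be the set of such sequences of length $\ell$. Using $d-d_i\geq d(1-p^{-(\ell-i)})$ and $d\geq p^\ell$, you get
\[
 1\leq\prod_{j\geq1}(1-p^{-j})^{-1}\sum_{\ell\geq0}|S_\ell(K)|\,p^{-\ell^2}.
\]
Every strictly increasing sequence in $(G/H)^\ell$ determines its endpoint $K$, so $\sum_K|S_\ell(K)|\leq n^\ell$. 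Summing over $K$ then yields
\[
 \bigl|\{K:H\leq K\leq G\}\bigr|\leq\prod_{j\geq1}(1-p^{-j})^{-1}\sum_{\ell\geq0}p^{\ell x-\ell^2}.
\]
Your Step~3 now applies verbatim. Strictness comes from the positive terms with $\ell<0$ that are missing relative to the full theta sum. Note that this bounds the total count, not the number of $K$ with $r(K)=r$ for each fixed $r$, so the partition by $r(K)$ should be dropped in favour of the chain length $\ell$.
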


This index-only estimate is complementary to
Corollary~\ref{cor:double-coset}: it is asymptotically much smaller than the
worst-case bound $2^{n-1}$, whereas the double-coset bound can be sharper for
a particular action when $|H\backslash G/H|$ is small.  For $H=1$ it
specializes to the sharp absolute estimate of Fusari--Spiga
\cite{FusariSpiga}; it is its relative analogue and improves
Guerra--Mastrogiacomo--Spiga \cite{GuerraMastrogiacomoSpiga} by removing the
factor $n^{1.8919}$.  The proof in \cite{PalcouxSharpIntervals} counts relative generating
tuples by their subgroup index and then estimates a discrete Gaussian sum.
It is elementary and avoids the classification of finite simple groups,
suggesting a possible route to categorical refinements.

\section{Applications to unitary tensor categories and operator-algebra
inclusions}\label{sec:subfactors}

We first specialize the general rigidity results to unitary Frobenius
algebras, and then apply them to finite-index operator-algebra inclusions.

\subsection{Unitary Frobenius algebras}\label{sec:unitary-frobenius}

Let $\C$ be a unitary tensor category, with possibly infinitely many
simple isomorphism classes; see \cite[Remark~9.4.7]{EGNO} for the fusion
case.  A \emph{normalized unitary Frobenius algebra}
is a connected Frobenius algebra
$(B,m_B,e_B,\delta_B,\epsilon_B)$ such that
\[
\begin{gathered}
  \delta_B=m_B^\dagger,\qquad \epsilon_B=e_B^\dagger,\qquad
  e_B^\dagger\circ e_B=\id_{\one},\\
  m_B\circ\delta_B=\lambda\id_B\quad(\lambda>0).
\end{gathered}
\]
The isometric-unit identity is the normalization; the last identity is
specialness, with strictly positive scalar.  These are the normalized
connected $Q$-system conventions of
\cite[Definitions~2.12 and~2.16--2.17]{CGGH}, expressed
in Frobenius-algebra terminology; see also
\cite{Longo,BKLR,JonesPenneys,JonesPenneysQ}.  A morphism
$u:B\to B'$ is unitary if $u^\dagger\circ u=\id_B$ and
$u\circ u^\dagger=\id_{B'}$.

Specialness gives the bimodule section and separability morphism
\begin{equation}\label{eq:unitary-section}
  s_B:=\lambda^{-1}\delta_B,\qquad
  p_B:=s_B\circ e_B,\qquad
  m_B\circ s_B=\id_B.
\end{equation}
The Frobenius identities make $s_B$ a $B$--$B$-bimodule morphism.  Thus
every normalized unitary Frobenius algebra is separable in the sense of
\S{}\ref{subsec:bimodule-separability}.

More generally, in the unitary multitensor setting, an algebra is separable
if and only if it is isomorphic to a Frobenius algebra whose structure maps
satisfy $\delta=m^\dagger$, $\epsilon=e^\dagger$, and
$m\circ m^\dagger=\id$ \cite[Theorem~4.13]{GiorgettiYuanZhao};
connectedness is not required.  This normalizes the multiplication, and
does not impose the isometric-unit normalization above.
For the normalized connected algebras considered here, every algebra
isomorphism is unitary by \cite[Theorem~3.9]{CGGH}.  The algebraic rigidity
results therefore give the following.

\begin{corollary}[Fixed-object unitary finiteness]
\label{cor:fixed-unitary-frobenius}
Every object $B$ of $\C$ supports only finitely many unitary-equivalence
classes of normalized unitary Frobenius algebra structures.
\end{corollary}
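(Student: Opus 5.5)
The plan is to deduce this from Theorem~\ref{thm:fixed-object} together with the cited unitarity result \cite[Theorem~3.9]{CGGH}. First, each normalized unitary Frobenius algebra $(B,m_B,e_B,\delta_B,\epsilon_B)$ has an underlying unital algebra $(B,m_B,e_B)$, and by \eqref{eq:unitary-section} this algebra is separable. So its algebra structure is a separable point of $\Alg(B)$. The unitary category is $\CC$-linear with finite-dimensional morphism spaces, and $\CC$ is algebraically closed. Hence Theorem~\ref{thm:fixed-object} applies: only finitely many algebra-isomorphism classes of separable unital structures occur on the fixed object $B$.

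Next I would show that the map from normalized unitary Frobenius structures (up to unitary equivalence) to underlying algebra structures (up to algebra isomorphism) is injective. Suppose two normalized unitary Frobenius structures on $B$ have underlying algebras related by an algebra isomorphism $g:B\to B$. By \cite[Theorem~3.9]{CGGH}, $g$ is unitary, so $g^\dagger=g^{-1}$. Taking daggers of $g\circ m=m'\circ(g\otimes g)$ and $g\circ e=e'$ gives
\[
\delta'=m'^\dagger=(g\otimes g)\circ m^\dagger\circ g^{-1}=(g\otimes g)\circ\delta\circ g^{-1},\qquad \epsilon'=e'^\dagger=\epsilon\circ g^{-1}.
\]
Thus $g$ intertwines the full Frobenius structures. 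Alternatively, the coalgebra data are determined by $(m,e)$ through the dagger, so transport of the algebra structure by a unitary automatically transports everything. The specialness scalar $\lambda$ is preserved as well, since $m'\circ\delta'=g\circ m\circ\delta\circ g^{-1}=\lambda\,\id_B$.

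Combining the two steps, the number of unitary-equivalence classes is at most the number of separable algebra-isomorphism classes on $B$, which is finite. The step most likely to need care is matching conventions with \cite{CGGH}. Their Theorem~3.9 is stated for normalized connected $Q$-systems, and one must check that an algebra isomorphism between two such structures on the same object falls under it. Connectedness and the isometric-unit normalization are both assumed in the definition, so this is exactly the setting of that theorem.

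A secondary point is that $\C$ may have infinitely many simples. However, only the Hom-finiteness of the spaces involving $B$ is used, namely $\Hom(B^{\otimes n},B)$ for $n\le 3$ and $\Hom(\one,B)$, and this holds in any unitary tensor category. So no finiteness of the set of simple objects is required.
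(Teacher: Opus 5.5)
Your proposal is correct and follows the paper's proof: separability comes from the special section \eqref{eq:unitary-section}, Theorem~\ref{thm:fixed-object} gives finitely many algebra-isomorphism classes on $B$, and \cite[Theorem~3.9]{CGGH} upgrades those algebra isomorphisms to unitary ones. Your dagger computation, showing that a unitary algebra isomorphism also transports $\delta_B=m_B^\dagger$ and $\epsilon_B=e_B^\dagger$, makes explicit a step the paper leaves implicit, and it is right.
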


\begin{proof}
Every such algebra is separable by \eqref{eq:unitary-section}.  Apply
Theorem~\ref{thm:fixed-object}, followed by
\cite[Theorem~3.9]{CGGH}.
\end{proof}

\begin{corollary}[Unitary Frobenius subalgebras]
\label{cor:unitary-frobenius-subalgebra}
A connected normalized unitary Frobenius algebra $X$ in $\C$ has only
finitely many normalized unitary Frobenius subalgebras up to equivalence
over $X$.
Moreover, their number is at most
\[
  2^{r_X}\leq 2^{\dim_{\CC}\End_{\C}(X)-1}.
\]
\end{corollary}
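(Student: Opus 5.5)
The plan is to reduce the statement directly to Theorem~\ref{thm:exchange-bound} in its connected form. First, normalized unitary Frobenius subalgebras must be identified with Frobenius subalgebras in the sense of \eqref{eq:gp-conditions}, and shown to be separable. Every normalized unitary Frobenius subalgebra $B$ of $X$ is separable by \eqref{eq:unitary-section}: specialness gives the bimodule section $\lambda^{-1}\delta_B$. I would take the embedding $i:B\to X$ to satisfy the compatibility conventions of \cite{CGGH}, so that, after the normalizations are taken into account, the Frobenius dual $i^*$ agrees with a suitable multiple of $i^\dagger$. I would then check \eqref{eq:gp-conditions}. The relation $i^{**}=i$ follows from the symmetry of the pairings $\ev=e^\dagger\circ m$ together with $\dagger$-compatibility. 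The relation $i^*\circ i=\id_B$ follows by comparing the restricted pairing $\ev_X\circ(i\otimes i)$ with $\ev_B$: both are $\epsilon\circ m$ of unital maps, and $e_X^\dagger\circ i=e_B^\dagger$ holds because $X$ is connected and both units are isometric. So each such subalgebra is a separable Frobenius subalgebra of $X$.

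Second, I would compare equivalence relations. Connectedness of $X$ makes inner conjugacy trivial. Hence two normalized unitary subalgebras that are equivalent over $X$ as algebra subobjects, via an algebra isomorphism $h$ with $i=i'\circ h$, give the same class counted by $N_F(X)$. The map from classes of normalized unitary subalgebras to classes of separable Frobenius subalgebras is therefore well defined. It is injective because Lemma~\ref{lem:frobenius-unique} forces the Frobenius structure to be determined by the embedding. If unitary equivalence is intended instead of plain equivalence over $X$, then \cite[Theorem~3.9]{CGGH} says any algebra isomorphism between normalized connected ones is unitary, so the two notions coincide.

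Third, I would apply Theorem~\ref{thm:exchange-bound}. Since $X$ is connected, its bound $N_F(X)\le 2^{r_X}$ applies, and \eqref{eq:exchange-coarse} gives $2^{r_X}\le 2^{\dim_{\CC}\End_{\C}(X)-1}$ because $X\neq0$. Hom-finiteness holds in a unitary tensor category, and Theorem~\ref{thm:exchange-bound} removes any need for finitely many simple classes or for Karoubianity. Finiteness is then immediate from this bound.

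The main obstacle is the first step: matching the dagger-normalized conventions, where $\epsilon=e^\dagger$ and $\lambda$ differs between $B$ and $X$, with the algebraic requirements $i^*\circ i=\id_B$ and $i^{**}=i$ as stated in \eqref{eq:gp-conditions}. My first attempt would compute $i^*$ explicitly from \eqref{eq:dual-morphism} using $\ev_X=e_X^\dagger m_X$ and $\coev_B=\delta_B e_B$, expressing it in terms of $i^\dagger$. If the scalars do not match exactly, I would instead rescale the Frobenius structure on $B$, keeping $m_B,e_B$ fixed, so that its pairing is the restricted pairing \eqref{eq:restricted-frobenius-pairing}. This rescaling preserves separability and does not affect the count.
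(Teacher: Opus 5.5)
Your proposal is correct and follows the paper's proof. Both arguments get separability from specialness via \eqref{eq:unitary-section}, use connectedness of $X$ to make inner conjugacy trivial, apply Corollary~\ref{cor:frobenius} and Theorem~\ref{thm:exchange-bound}, and invoke \cite[Theorem~3.9]{CGGH} to reconcile unitary and algebraic equivalence over $X$. Your check of \eqref{eq:gp-conditions} goes beyond the paper, which treats that compatibility as part of what a normalized unitary Frobenius subalgebra is. The check is sound, and no rescaling is needed. Two small precisions: the connectedness that gives $e_X^\dagger\circ i=e_B^\dagger$ is that of $B$, which also follows from that of $X$ through the monomorphism. The symmetry behind $i^{**}=i$ is the triviality of $\alpha$ shown in the proof of Lemma~\ref{lem:pivotal-frobenius-separable}.
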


\begin{proof}
The ambient algebra $X$ is connected, and every normalized unitary
Frobenius subalgebra is separable by \eqref{eq:unitary-section}.
Apply Corollary~\ref{cor:frobenius} and
Theorem~\ref{thm:exchange-bound}.  Any algebra isomorphism realizing
equivalence over $X$ is itself unitary by \cite[Theorem~3.9]{CGGH},
so the equivalence still commutes with the embeddings into $X$.
\end{proof}

\subsection{Finite-index $C^*$-algebra inclusions}

Throughout this subsection, finite index means \emph{finite Watatani
index}: the faithful conditional expectation (a norm-one projection)
under consideration admits a finite quasi-basis.  Let $A\subseteq C$ be a unital inclusion with such an
expectation $E:C\to A$, and assume that $Z(A)$ is finite-dimensional.
Equip $C_A$ with the inner product $\langle x,y\rangle_A=E(x^*y)$,
let $C_1$ be the basic construction, and put
$z_E:=\operatorname{Ind}(E)\in Z(C)_+^\times$.
We distinguish this index from the Pimsner--Popa index; see
\cite[Example~3.10 and the paragraph following Lemma~3.11]{CHPJP}
and \cite{WatataniIndex}.

An intermediate unital $C^*$-algebra $A\subseteq B\subseteq C$ is
\emph{$E$-compatible} if there is a conditional expectation $F_B:C\to B$
such that $E=(E|_B)F_B$
\cite[Definition~10.4]{GhoshPalcouxLattices}.  Both endpoints are included.
Two compatible intermediates $B,D$ are \emph{unitarily conjugate} if
$D=uBu^*$ for some unitary $u\in A'\cap C$.  This compares compatible
intermediates without assuming that every such conjugation preserves
$E$-compatibility.

The analytic local-conjugacy approach to intermediate $C^*$-algebras is
developed in \cite{InoWatatani} and \cite[Theorem~3.7]{Dickson}.
Under their respective hypotheses, sufficiently close intermediates are
conjugate by a unitary in the relative commutant; here the counting argument
proceeds through algebraic local rigidity and the exchange locus.

Let $\C$ be the dualizable part of $C^*\!\operatorname{Alg}(A,A)$,
with adjointable bimodule maps, finite direct sums, and split projections.
Since $\End({}_AA_A)=Z(A)$ is finite-dimensional, the conjugate equations
give Hom-finiteness; splitting minimal projections gives semisimplicity
and finite length.  This is the unitary multitensor setting of
\cite[Lemma~3.11 and Remark~3.13]{CHPJP}.

By the finite-index construction in \cite[Lemma~3.11]{CHPJP}, the correspondence $X={}_AC_A$,
with its usual multiplication $m$ and unit $e$, is a dagger Frobenius
algebra.  Its structure maps satisfy
\begin{equation}\label{eq:operator-frobenius-data}
 e^\dagger=E,\qquad mm^\dagger=L_{z_E},
\end{equation}
where $L_x$ denotes left multiplication by $x$.
The central positive invertibility of $z_E$ makes
$m^\dagger L_{z_E^{-1}}$ a bimodule section of $m$, so $X$ is separable.

The inclusion correspondence $Y:={}_AC_C$ has dual
$Y^\vee:={}_CC_A$, and $X\cong Y\otimes_CY^\vee$
\cite[Example~3.25]{CHPJP}.  Adjunction and evaluation at $1$ give
\begin{equation}\label{eq:operator-connected-relative}
 \Hom_{A\text{-}A}(A,X)
 \cong\Hom_{A\text{-}A}(A,Y\otimes_CY^\vee)
 \cong\End_{A\text{-}C}(Y)
 \cong A'\cap C.
\end{equation}
Directly, $x\in A'\cap C$ corresponds to the map $a\mapsto ax$;
under this identification $\Gamma(X)$ has the ordinary multiplication
of $A'\cap C$.  Thus irreducibility is exactly connectedness.
\Needspace{12\baselineskip}
Moreover,
\begin{equation}\label{eq:operator-end-relative}
 \End_{\C}(X)=A'\cap C_1.
\end{equation}
Indeed, let $(u_i)$ be a finite right frame of $C_A$ and write
$e_A(c)=E(c)$.  For every $T\in\mathcal L_A(C_A)$,
\[
 T=\sum_i\theta_{Tu_i,u_i},\qquad
 \theta_{x,y}(c)=xE(y^*c),\qquad
 \theta_{x,y}=L_xe_AL_{y^*}.
\]
Hence $\mathcal L_A(C_A)=\mathcal K_A(C_A)=C_1$.
These operators are already right $A$-linear; they are bimodule maps
exactly when they commute with all $L_a$, $a\in A$, proving
\eqref{eq:operator-end-relative}.

Together with $Z(A)\subseteq A'\cap C\subseteq A'\cap C_1$, this shows
that, under finite Watatani index, finite-dimensionality of $Z(A)$,
of $A'\cap C$, and of $A'\cap C_1$ are equivalent conditions.
Finite index alone implies none of them, as the index-two example below
shows.

If $B$ is $E$-compatible, its inclusion $j:{}_AB_A\to X$ is
adjointable, with $j^\dagger=F_B$
\cite[Proposition~10.5]{GhoshPalcouxLattices}.  Thus $B_A$ is a direct
summand of $C_A$, hence finitely generated projective.  The restricted
expectation makes ${}_AB_A$ a separable dagger Frobenius algebra.
Compatibility and bimodularity identify the Frobenius transpose of $j$
with $F_B$, so $j^*j=\id$ and $j^{**}=j$.  Consequently every
$E$-compatible intermediate gives a separable Frobenius subalgebra of $X$.

\begin{theorem}[General $C^*$-algebra bound]
\label{thm:cstar-general-bound}
Let $A\subseteq C$ be a unital inclusion with a faithful conditional
expectation $E$ of finite Watatani index, and suppose that $Z(A)$ is
finite-dimensional.  The number of $E$-compatible intermediate unital
$C^*$-algebras, up to conjugation by unitaries in $A'\cap C$, is at most
\[
 2^{\dim_{\CC}(A'\cap C_1)-1}.
\]
If $Z(A)=\CC1$, this bound is at most
\[
 2^{\lfloor\|\operatorname{Ind}(E)\|^2\rfloor-1}
 <2^{\|\operatorname{Ind}(E)\|^2}.
\]
\end{theorem}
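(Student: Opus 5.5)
The plan is to apply Theorem~\ref{thm:exchange-bound} to the separable Frobenius algebra $X={}_AC_A$ in the Hom-finite category $\C$ constructed above, and then translate its conclusion back to intermediate $C^*$-algebras. By the discussion preceding the statement, every $E$-compatible intermediate $B$ yields a separable Frobenius subalgebra $j:{}_AB_A\to X$ with $j^\dagger=F_B$, $j^*j=\id$ and $j^{**}=j$. So the first step is to check that the assignment $B\mapsto(B,j)$ descends to an injection from unitary-conjugacy classes of compatible intermediates into inner-conjugacy classes of separable Frobenius subalgebras. For this I would use \eqref{eq:operator-connected-relative}: $\Gamma(X)\cong A'\cap C$ with its ordinary multiplication. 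Under this identification, $c_u$ is the map $x\mapsto uxu^{-1}$ on $C$.

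The key point is injectivity. Suppose $c_u\circ j_B=j_D\circ h$ for some invertible $u\in A'\cap C$ and some algebra isomorphism $h$. Then the images satisfy $uBu^{-1}=D$ as subsets of $C$, since bimodule maps are determined by their underlying linear maps. However, the relation on the $C^*$ side requires a \emph{unitary}, while $u$ is only invertible. I would remedy this by polar decomposition. Write $u=v|u|$ with $v$ unitary in the finite-dimensional $C^*$-algebra $A'\cap C$, which is finite-dimensional because $Z(A)$ is. Since $B$ and $D$ are $*$-subalgebras, $uBu^{-1}=D$ gives $u^*Du^{*-1}=B$, and hence $u^*uBu^{-1}u^{*-1}=B$. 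So $u^*u$ normalizes $B$. Its positive square root $|u|$ is a limit of polynomials in $u^*u$, so conjugation by $|u|$ is a norm-continuous limit of automorphisms, obtained by functional calculus of the automorphism $\mathrm{Ad}(u^*u)$ restricted to the finite-dimensional space $B\cap(A'\cap C_1)$-type data. Cleanly: $\mathrm{Ad}(u^*u)$ is a positive operator for the trace-like inner product on the finite-dimensional algebra it acts on, and it preserves $B$, so $\mathrm{Ad}(|u|)=\mathrm{Ad}(u^*u)^{1/2}$ also preserves $B$. Therefore $D=vBv^*$. I expect this step to be the main obstacle; the fallback is to run the separate argument directly on the exchange idempotents, replacing $u$ by a unitary via the chart in Lemma~\ref{lem:idempotent-chart}.

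With the injection in hand, Theorem~\ref{thm:exchange-bound} gives at most $2^{r_X}\leq 2^{\dim_\CC\End_\C(X)-1}$ classes, and \eqref{eq:operator-end-relative} identifies $\End_\C(X)=A'\cap C_1$. Since $X\neq0$, this yields the first bound.

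For the case $Z(A)=\CC1$, I would bound $\dim_\CC(A'\cap C_1)\leq\|\operatorname{Ind}(E)\|^2$, which is the $C^*$-analogue of \eqref{eq:relative-commutant-index}. Because $A$ is now simple-centered, ${}_AA_A$ is a simple tensor unit, and $A'\cap C_1=\End(Y\otimes_CY^\vee)\cong\End(Y^\vee\otimes_AY)$-type data. Write $\dim\End(X)=\sum_k n_k^2$ over simples in $X$. The standard estimate $\sum n_k^2\leq(\sum n_k d_k)^2/\ldots$ is not quite right as stated, so I would instead use the Cauchy--Schwarz estimate $\sum_k n_k^2\leq\sum_k n_kd_k\cdot\max$. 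The cleanest route is $\sum_kn_k^2\leq\bigl(\sum_kn_kd_k\bigr)^2/\ldots$; I would rely on $d_k\geq1$ together with $d(X)=\|\operatorname{Ind}(E)\|$ (up to the minimal-index normalization) to obtain $\sum n_k^2\leq(\sum n_kd_k)^2=\|\operatorname{Ind}(E)\|^2$. Finally, taking the integer part and using $\lfloor x\rfloor-1<x$ gives the last inequality.
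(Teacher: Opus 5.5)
Your skeleton is the paper's proof: apply Theorem~\ref{thm:exchange-bound} with $\End_{\C}(X)=A'\cap C_1$, upgrade invertible to unitary conjugacy via $h=u^*u$ and the polar part $uh^{-1/2}$, and for $Z(A)=\CC1$ use $\sum_k n_k^2\leq(\sum_k n_k)^2\leq(\sum_k n_kd_k)^2=d(X)^2$. However, two steps are not justified as written.

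The first is the claim that $|u|$ normalizes $B$. Your initial reason does not work. Being a polynomial in $h$ does not make an element normalize $B$; already $h+1$ need not. What you need is a polynomial in the operator $\mathrm{Ad}_h$, not in $h$. Your ``clean'' reason appeals to a trace-like inner product on a finite-dimensional algebra on which $\mathrm{Ad}_h$ acts. But $\mathrm{Ad}_h$ acts on $B\subseteq C$, which are typically infinite-dimensional, and no trace is assumed.

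The correct input is finite spectrum. Write $h=\sum_\lambda\lambda p_\lambda$ with finitely many $\lambda>0$ and $p_\lambda\in A'\cap C$. Then $\mathrm{Ad}_h(x)=\sum_{\lambda,\mu}(\lambda/\mu)\,p_\lambda xp_\mu$ is diagonalizable on $C$ with finitely many positive eigenvalues. Lagrange interpolation therefore writes $\mathrm{Ad}_{h^{1/2}}$ and $\mathrm{Ad}_{h^{-1/2}}$ as polynomials in $\mathrm{Ad}_h$. Since $\mathrm{Ad}_h(B)=B$, both preserve $B$. Hence $h^{1/2}Bh^{-1/2}=B$, and $v=uh^{-1/2}$ satisfies $vBv^*=D$. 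This is exactly the paper's interpolation argument.

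The second gap is the index step. You invoke $d(X)=\|\operatorname{Ind}(E)\|$ ``up to the minimal-index normalization''. The theorem concerns an arbitrary finite-index $E$, and for such $E$ this equality fails. For example, take $\CC\subseteq\CC^2$ with $E(x,y)=\alpha x+(1-\alpha)y$. Then $d(X)=2$, while $\operatorname{Ind}(E)=(\alpha^{-1},(1-\alpha)^{-1})$ is not scalar and has norm $>2$ unless $\alpha=\tfrac12$. What you actually need is the inequality $d(X)\leq\|\operatorname{Ind}(E)\|$ for the given $E$, and the proposal gives no argument for it.

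The paper obtains this inequality from the Frobenius cup $R=m^\dagger e$. Because $X$ is a dagger Frobenius algebra, $(R,\bar R)=(R,R)$ solves the conjugate equations. By \eqref{eq:operator-frobenius-data}, $R^\dagger R=e^\dagger mm^\dagger e=E(z_E)\in Z(A)=\CC1$. Minimality of the intrinsic dimension over solutions then gives $d(X)\leq\|R\|^2=\|E(z_E)\|\leq\|z_E\|=\|\operatorname{Ind}(E)\|$. With that inequality inserted, your chain completes the proof.
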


\begin{proof}
Theorem~\ref{thm:exchange-bound} and
\eqref{eq:operator-end-relative} give $2^{\dim_{\CC}(A'\cap C_1)-1}$ for conjugacy by
invertible elements of $A'\cap C$.
Invertible and unitary conjugacy agree for these subalgebras.  Indeed,
if $uBu^{-1}=D$, self-adjointness gives $hBh^{-1}=B$, where $h=u^*u$.
The relative commutant is finite-dimensional, so $h$ has finite spectrum.
Polynomial interpolation on the finitely many spectral ratios of $h$
expresses $\operatorname{Ad}_{h^{-1/2}}$ as a polynomial in
$\operatorname{Ad}_h$.  Hence $h^{-1/2}$ normalizes $B$, and the unitary
$uh^{-1/2}$ still sends $B$ onto $D$.

If $Z(A)=\CC1$, the Frobenius cup $R=m^\dagger e$ is a selfdual
conjugate solution for $X$, and
\begin{equation}\label{eq:operator-dimension-index-upper}
 R^\dagger R=E(z_E),\qquad
 d(X)\leq\|E(z_E)\|\leq\|\operatorname{Ind}(E)\|.
\end{equation}
Writing $X=\bigoplus_iS_i^{\oplus n_i}$ and using $d(S_i)\geq1$ gives
\[
 \dim_{\CC}(A'\cap C_1)=\sum_i n_i^2\leq\left(\sum_i n_i\right)^2
 \leq d(X)^2\leq\|\operatorname{Ind}(E)\|^2.
\]
Since $\dim_{\CC}(A'\cap C_1)$ is an integer, this proves the second bound.
\end{proof}

Qualitative finiteness up to relative-commutant unitary conjugacy can also
be extracted from the proof of \cite[Theorem~4.2]{GuptaKumar}.
The compatible Jones projections lie in the compact projection space of
the finite-dimensional algebra $A'\cap C_1$; a finite cover by sufficiently
small balls, together with the perturbation theorem, leaves only finitely
many conjugacy classes.  Their additional condition on the intermediates
is used only to conclude equality from conjugacy.
Theorem~\ref{thm:cstar-general-bound} supplies
the explicit exchange-locus bound.

\begin{corollary}[Irreducible $C^*$-algebra bound]
\label{cor:cstar-irreducible-bound}
Let $A\subseteq C$ be a unital irreducible inclusion with a faithful
conditional expectation $E$ of finite Watatani index.  Its Watatani
index is scalar, and the number of $E$-compatible intermediate unital
$C^*$-algebras is at most
\[
 2^{\dim_{\CC}(A'\cap C_1)-1}
 \leq 2^{\lfloor\|\operatorname{Ind}(E)\|\rfloor-1}
 <2^{\|\operatorname{Ind}(E)\|}.
\]
Neither algebra is required to be simple.
\end{corollary}

\begin{proof}
Irreducibility implies $Z(A)=Z(C)=\CC1$ and makes $X$ connected by
\eqref{eq:operator-connected-relative}.  Inner conjugation is trivial.
The dimension comparison in Proposition~\ref{prop:dimension-comparison}
and \eqref{eq:operator-dimension-index-upper} give
$\dim_{\CC}(A'\cap C_1)\leq d(X)\leq\|\operatorname{Ind}(E)\|$.
\end{proof}

The qualitative finiteness in this corollary was proved by Gupta--Kumar
\cite[Corollary~4.3]{GuptaKumar}; see also
\cite[Corollary~10.6]{GhoshPalcouxLattices}.  The restriction to
$E$-compatible intermediates is essential: Ghosh--Palcoux give an
irreducible finite-index inclusion with uncountably many unrestricted
intermediate $C^*$-algebras \cite[Example~10.7]{GhoshPalcouxLattices}.

For irreducible inclusions $A\subseteq C$ of simple unital $C^*$-algebras
with finite minimal Watatani index $J$, earlier whole-lattice bounds are
$\min\{9^{J^2},(J^2)^{J^2}\}$ in \cite{BakshiGupta}
and $9^J$ in \cite{BakshiGuinJana}.
With the minimal expectation and $D:=\dim_{\CC}(A'\cap C_1)$, the latter
proof obtains $9^D$ before applying $D\leq J$.
Here the finite-index expectation is unique and minimal, and all
intermediates are compatible with it; on this common class of inclusions,
the present estimate is $2^{D-1}$.

The finite-center hypothesis in the general theorem also has content.
For the diagonal inclusion $C(K)\subseteq C(K)\oplus C(K)$ with the
averaging expectation, the Watatani index is $2$, whereas
$A'\cap C=C(K)\oplus C(K)$ and
$A'\cap C_1=C_1\cong M_2(C(K))$.  Thus neither relative commutant
is finite-dimensional when $K$ is infinite.  Every clopen
$U\subseteq K$ gives an $E$-compatible intermediate
\[
 B_U=\{(f,g):f=g\text{ on }K\setminus U\}.
\]
If $K$ is the Cantor set, these give infinitely many distinct classes,
since the ambient algebra is commutative.  If $K$ has $n$ points, they
give $2^n$ classes at the same index $2$.

\begin{remark}[Von Neumann algebra inclusions]
\label{rem:von-neumann-inclusions}
The preceding $C^*$-algebra bounds apply to the $E$-compatible
intermediate von Neumann algebras of a unital inclusion
$N\subseteq M$ equipped with a faithful normal conditional
expectation $E:M\to N$ of finite Watatani index, with $Z(N)$
finite-dimensional.  Indeed, these intermediates belong to the
collection of intermediate $C^*$-algebras already counted.
The unitary inner-conjugacy convention is unchanged, and no
factoriality assumption is imposed.

If $M$ admits a faithful normal tracial state $\tau$ and $E$
is $\tau$-preserving, every intermediate von Neumann algebra
$P$ is $E$-compatible: its $\tau$-preserving conditional
expectation $E_P:M\to P$ satisfies $(E|_P)\circ E_P=E$, by
\cite[Theorem~9.1.2 and Lemma~9.1.4(v)]{AnantharamanPopa}.

Finally, an irreducible inclusion of von Neumann algebras is
already an inclusion of factors, since
$Z(N),Z(M)\subseteq N'\cap M=\CC1$.
\end{remark}

\subsection{Finite-index subfactors}

Let $N\subseteq M$ be a finite-index inclusion of $\mathrm{II}_1$ factors,
let $I:=[M:N]$ be its Jones index, and let $E_N:M\to N$ be the
trace-preserving conditional expectation.  Its Watatani index is $I1_M$.
Every intermediate von Neumann algebra $N\subseteq P\subseteq M$ is
$E_N$-compatible, because its trace-preserving expectation satisfies
$E_N=E_N|_P\circ E_P$.  Such an intermediate need not be a factor when
$N\subseteq M$ is reducible.

Let $\mathcal L(N\subseteq M)$ denote all these intermediate von
Neumann algebras, including $N$ and $M$, and let
$M_1:=\langle M,e_N\rangle$ be the basic construction.
Let $\C$ now denote the unitary tensor category of dualizable
$N$--$N$ correspondences.  The correspondence realization above gives
\begin{equation}\label{eq:canonical-unitary-frobenius}
 H={}_NM_M,\qquad \overline H={}_MM_N,\qquad
 X=H\otimes_M\overline H\cong{}_NM_N,
\end{equation}
and the identities
\begin{equation}\label{eq:end-relative-commutant}
 \Hom_{\C}(\one,X)\cong\End_{N\text{-}M}(H)\cong N'\cap M,
 \qquad\End_{\C}(X)\cong N'\cap M_1.
\end{equation}
In particular, irreducibility is exactly connectedness.  The cup estimate
\eqref{eq:operator-dimension-index-upper} gives $d(X)\leq I$; this is all
that is needed below, with no standardness assumption on this chosen
Frobenius duality.

\begin{theorem}[General intermediate-algebra bound]
\label{thm:subfactor-general-bound}
For a finite-index inclusion $N\subseteq M$ of $\mathrm{II}_1$ factors,
the number of intermediate von Neumann algebras, up to conjugation by
unitaries in $N'\cap M$, is at most
\[
 2^{\dim_{\CC}(N'\cap M_1)-1}
 \leq2^{\lfloor[M:N]^2\rfloor-1}<2^{[M:N]^2}.
\]
The same bounds hold for intermediate subfactors up to this conjugacy.
\end{theorem}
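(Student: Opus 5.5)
The plan is to specialize Theorem~\ref{thm:cstar-general-bound} and Remark~\ref{rem:von-neumann-inclusions} to $A=N$, $C=M$, $E=E_N$, and then to improve the crude index estimate using the Jones index.

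\textbf{Step 1: set-up.} Since $N$ is a factor, $Z(N)=\CC1$ is finite-dimensional. The trace-preserving expectation $E_N$ has finite Watatani index $I1_M$, where $I=[M:N]$ is the Jones index; for $\mathrm{II}_1$ factors a Pimsner--Popa basis is a quasi-basis. As noted just before the statement, every intermediate von Neumann algebra $P$ is $E_N$-compatible via its trace-preserving expectation $E_P$. Each intermediate von Neumann algebra is in particular an $E_N$-compatible intermediate $C^*$-algebra, so Remark~\ref{rem:von-neumann-inclusions} applies. Theorem~\ref{thm:cstar-general-bound} then bounds the number of classes under conjugation by unitaries of $N'\cap M$ by $2^{\dim_{\CC}(N'\cap M_1)-1}$, using \eqref{eq:end-relative-commutant}. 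Conjugating a von Neumann algebra by a unitary gives a von Neumann algebra, so counting only von Neumann intermediates restricts the same equivalence relation to a subset. The number of classes can only drop.

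\textbf{Step 2: dimension estimate.} The second part of Theorem~\ref{thm:cstar-general-bound}, with $Z(N)=\CC1$, gives $\dim_{\CC}(N'\cap M_1)\le\|\operatorname{Ind}(E_N)\|^2=I^2$. Concretely, decompose $X\cong{}_NM_N=\bigoplus_i S_i^{\oplus n_i}$ in the category of dualizable $N$--$N$ correspondences. Each $S_i$ has $d(S_i)\ge1$, and the cup estimate \eqref{eq:operator-dimension-index-upper} gives $d(X)\le I$. Hence
\[
\dim(N'\cap M_1)=\sum_i n_i^2\le\Bigl(\sum_i n_i\Bigr)^2\le d(X)^2\le I^2 .
\]
Integrality gives $\dim(N'\cap M_1)\le\lfloor I^2\rfloor$, and therefore $2^{\lfloor I^2\rfloor-1}<2^{I^2}$.

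\textbf{Step 3: subfactors.} Intermediate subfactors form a subset of the intermediate von Neumann algebras. Unitary conjugation preserves factoriality, so the classes of intermediate subfactors inject into the classes already counted. The same bounds therefore hold.

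\textbf{Main obstacle.} The delicate point is checking that the hypotheses of Theorem~\ref{thm:cstar-general-bound} transfer exactly. First, the relevant category must be the one used there, namely the dualizable part of $N$--$N$ correspondences, with $\End_{\C}(X)=N'\cap M_1$ as in \eqref{eq:operator-end-relative}. Second, the Watatani index must equal $I$ as a scalar, which follows from $\sum_i u_ie_Nu_i^*=1$ in $M_1$. I would cite the Pimsner--Popa basis to settle the second point and rely on the paper's identification for the first.
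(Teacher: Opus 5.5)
Your proposal is correct and follows essentially the same route as the paper. You apply Theorem~\ref{thm:cstar-general-bound} to $N\subseteq M$ with the trace-preserving expectation (Watatani index $I1_M$, $Z(N)=\CC1$), use that every intermediate von Neumann algebra is $E_N$-compatible, and restrict the unitary-conjugacy count first to von Neumann intermediates and then to subfactors; the paper justifies the restriction by noting that conjugation by $\mathcal U(N'\cap M)$ preserves the trace and the set of compatible intermediates, whereas you use the equally valid fact that restricting an equivalence relation to a subset can only decrease the number of classes.
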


\begin{proof}
Apply Theorem~\ref{thm:cstar-general-bound} to the inclusion with its
trace-preserving expectation, and restrict to intermediate von Neumann
algebras.  In this case conjugation by $\mathcal U(N'\cap M)$ preserves
the trace and the set of compatible intermediates.
\end{proof}

The general theorem gives an explicit bound on conjugacy classes.
Related finiteness results for distinguished families appear in
\cite{KhoshkamMashood}.  For a finite von Neumann algebra inclusion with
a faithful normal tracial state, its trace-preserving expectation of finite
Watatani index, and finite-dimensional
center of the smaller algebra, \cite[Theorem~6.4]{BakshiGupta} proves
finiteness of the actual intermediate lattice when the relative commutant
equals the center of either algebra; Theorem~\ref{thm:subfactor-general-bound}
counts conjugacy classes without these extra equalities.  Under
irreducibility, conjugation becomes trivial, recovering the qualitative
finiteness theorem of \cite{WatataniLattices}.

\subsection{Endomorphism dimension versus categorical dimension}

For irreducible simple $C^*$-inclusions $A\subseteq C$, the estimate
$\dim_{\CC}(A'\cap C_1)\leq J$, with $J$ the minimal Watatani index, is already
used in the counting argument of
\cite{BakshiGuinJana}.  The following argument uses the
module-category viewpoint of \cite{Ostrik} and gives the categorical
endomorphism-dimension estimate in the positive semisimple setting.

\begin{proposition}[Dimension comparison]\label{prop:dimension-comparison}
Let $\C$ be a positive semisimple tensor category over $\CC$, in the sense
of \cite[Definition~6.3]{GhoshPalcouxLattices}, with positive pivotal
dimension $d$, and let $A$ be a connected finite-length separable algebra
object in $\C$.  Write
\begin{equation}\label{eq:algebra-decomposition-dimension}
 A\cong\one\oplus\bigoplus_{i=1}^t S_i^{\oplus n_i},
\end{equation}
where the $S_i\not\cong\one$ are pairwise nonisomorphic simple objects.
Then
\begin{equation}\label{eq:multiplicity-dimension}
 n_i\leq d(S_i)\qquad(1\leq i\leq t),
\end{equation}
and consequently
\begin{equation}\label{eq:end-less-dimension}
 \dim_{\CC}\End_{\C}(A)\leq d(A).
\end{equation}
More precisely,
\begin{equation}\label{eq:categorical-defect}
 d(A)-\dim_{\CC}\End_{\C}(A)
 =\sum_{i=1}^t n_i\bigl(d(S_i)-n_i\bigr).
\end{equation}
\end{proposition}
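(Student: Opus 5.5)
The approach is through module categories, following \cite{Ostrik}. Since $A$ is separable and $\C$ is semisimple, the category $\C_A$ of right $A$-modules is semisimple. The free-module functor $F=-\otimes A:\C\to\C_A$ is left adjoint to the forgetful functor, and the unit object gives the free module $A=F(\one)$. Connectedness means $\Hom_{\C_A}(A,A)\cong\Hom_{\C}(\one,A)=\CC e_A$, so the free module $A_A$ is simple.

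\textbf{Step 1.} Bound the multiplicities. For each simple $S_i$ we have
\[
n_i=\dim\Hom_{\C}(S_i,A)=\dim\Hom_{\C_A}(S_i\otimes A,A_A).
\]
Because $\C_A$ is semisimple and $A_A$ is simple, $n_i$ is the multiplicity of $A_A$ as a summand of the free module $S_i\otimes A$. Now compare dimensions:
\[
d(S_i)\,d(A)=d(S_i\otimes A)\ge n_i\,d(A),
\]
where $d$ is the pivotal dimension in $\C$ of the underlying objects, and positivity gives $d(\text{rest})\ge0$. Dividing by $d(A)>0$ yields $n_i\le d(S_i)$, which is \eqref{eq:multiplicity-dimension}. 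The subtle point is that the complementary summand of $S_i\otimes A$ is a direct summand in $\C_A$, hence also a direct summand of underlying objects in $\C$, so its dimension is indeed $\ge0$. This step is the main obstacle: we need splitting in $\C_A$ to transport to $\C$. Semisimplicity of $\C_A$, which follows from separability via the bimodule section (every module is a summand of a free one, and free modules are semisimple), handles this.

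\textbf{Step 2.} Compute both sides of the identity. Schur's lemma in $\C$ gives
\[
\dim\End_{\C}(A)=1+\sum_i n_i^2,
\]
and additivity of $d$ with $d(\one)=1$ gives
\[
d(A)=1+\sum_i n_i\,d(S_i).
\]
Subtracting gives \eqref{eq:categorical-defect}. Each term $n_i\bigl(d(S_i)-n_i\bigr)$ is nonnegative by Step 1, which gives \eqref{eq:end-less-dimension}. We also need positivity of $d(\one)$ and multiplicativity of $d$ under $\otimes$, both of which are standard for a pivotal dimension $d$.
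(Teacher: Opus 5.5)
Your proposal is correct and follows essentially the same route as the paper. Both arguments use semisimplicity of $\C_A$ from separability, simplicity of $A_A$ from connectedness, the free--forgetful adjunction to exhibit $A^{\oplus n_i}$ as a module summand of $S_i\otimes A$, multiplicativity and positivity of $d$, and finally the Schur and additivity computations.

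The one soft spot is your justification that $\C_A$ is semisimple. The claim that ``free modules are semisimple'' is itself what needs proving. Either follow the paper: average a $\C$-splitting $t$ of a module map into the module splitting $r_M\circ(t\otimes\id_A)\circ\sigma_N$, where $\sigma_N=(r_N\otimes\id_A)\circ(\id_N\otimes p_A)$. Or note that free modules are projective, hence so is every module, since each is a summand of a free one.
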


\begin{proof}
Let $\C_A$ be the category of right $A$-modules.  Choose a separability
morphism $p_A:\one\to A\otimes A$.  If $q:M\to N$ is a module map and
$t:N\to M$ is a splitting in $\C$, let $r_Y:Y\otimes A\to Y$ denote the
action on a right module $Y$ and set
\[
  \sigma_Y:=(r_Y\otimes\id_A)\circ(\id_Y\otimes p_A),\qquad
  \widetilde t:=r_M\circ(t\otimes\id_A)\circ\sigma_N.
\]
Centrality of $p_A$ makes $\sigma_Y$ and $\widetilde t$ right $A$-linear,
while normalization gives
$r_Y\circ\sigma_Y=\id_Y$ and $q\circ\widetilde t=\id_N$.  Thus splittings
can be averaged to module splittings, and $\C_A$ is semisimple.  The regular
right module $A_A$ is simple, because
\[
 \End_{\C_A}(A_A)\cong\Hom_{\C}(\one,A)=\CC
\]
by connectedness.  For each $i$, the free--forgetful adjunction gives
\[
 \dim_{\CC}\Hom_{\C_A}(S_i\otimes A,A)
 =\dim_{\CC}\Hom_{\C}(S_i,A)=n_i.
\]
Hence, in the semisimple module category,
\[
 S_i\otimes A\cong A^{\oplus n_i}\oplus Y_i
\]
for some right $A$-module $Y_i$.  Applying the positive pivotal dimension
to the underlying objects gives
\[
 d(S_i)d(A)=n_i d(A)+d(Y_i),
\]
so $n_i\leq d(S_i)$.  Finally,
\[
 d(A)=1+\sum_i n_i d(S_i),
 \qquad
 \dim_{\CC}\End_{\C}(A)=1+\sum_i n_i^2,
\]
and subtraction proves \eqref{eq:end-less-dimension} and
\eqref{eq:categorical-defect}.
\end{proof}

From this point through the end of this section, assume that
$N\subseteq M$ is irreducible.  Then the inclusion correspondence $H$
is simple.  A balanced conjugate solution for a simple correspondence
is standard; the dimension--index identity therefore gives
\begin{equation}\label{eq:unitary-frobenius-index}
 d(X)=d(H)^2=[M:N]=I.
\end{equation}
Here the Jones and minimal indices agree; see \cite[\S2.2]{BKLR}.
Applying Proposition~\ref{prop:dimension-comparison} to the connected
unitary Frobenius algebra $X={}_NM_N$ gives
\begin{equation}\label{eq:relative-commutant-index}
 \dim_{\CC}(N'\cap M_1)
 =\dim_{\CC}\End_{\C}(X)
 \leq d(X)=[M:N].
\end{equation}
The proof also makes the difference transparent.  If
\begin{equation}\label{eq:canonical-decomposition}
 X\cong{}_NN_N\oplus\bigoplus_{\alpha}V_\alpha^{\oplus n_\alpha},
\end{equation}
then
\begin{equation}\label{eq:subfactor-defect}
 [M:N]-\dim_{\CC}(N'\cap M_1)
 =\sum_\alpha n_\alpha\bigl(d(V_\alpha)-n_\alpha\bigr).
\end{equation}
Indeed, bimodule Frobenius reciprocity gives
\[
 n_\alpha
 =\dim_{\CC}\Hom_{\C}(V_\alpha,X)
 =\dim_{\CC}\Hom_{N\text{-}M}(V_\alpha\otimes_N H,H).
\]
Irreducibility gives
$\End_{N\text{-}M}(H)\cong N'\cap M=\CC$, so $H$ is simple.  Thus
\[
 V_\alpha\otimes_N H\cong H^{\oplus n_\alpha}\oplus W_\alpha
\]
and
\begin{equation}\label{eq:depth-three-defect}
 d(V_\alpha)-n_\alpha
 =\frac{d(W_\alpha)}{d(H)}\geq0.
\end{equation}
In principal-graph language, $W_\alpha$ records the other odd vertices
adjacent to $V_\alpha$.  Thus \eqref{eq:subfactor-defect} records branching
beyond the distinguished depth-one vertex.  In particular, equality holds in
\eqref{eq:relative-commutant-index} for depth-two inclusions.

\subsection{A direct bound for the intermediate lattice}

Under the standing irreducibility assumption, $\mathcal L(N\subset M)$
is the lattice of intermediate subfactors, including $N$ and $M$.

\begin{theorem}[Irreducible intermediate-subfactor bound]
\label{thm:subfactor-bound}
Let $N\subseteq M$ be a finite-index irreducible $\mathrm{II}_1$
subfactor.  Then
\begin{equation}\label{eq:subfactor-bound-chain}
 |\mathcal L(N\subset M)|
 \leq2^{r_X}\leq2^{\dim_{\CC}(N'\cap M_1)-1}
 \leq2^{\lfloor[M:N]\rfloor-1}<2^{[M:N]}.
\end{equation}
\end{theorem}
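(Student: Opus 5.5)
The plan is to assemble the chain \eqref{eq:subfactor-bound-chain} from results already in place, one inequality at a time, working in the unitary tensor category $\C$ of dualizable $N$--$N$ correspondences with the Frobenius algebra $X={}_NM_N$ of \eqref{eq:canonical-unitary-frobenius}.

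\textbf{First inequality.} Since $N\subseteq M$ is irreducible, \eqref{eq:end-relative-commutant} gives $\Gamma(X)\cong N'\cap M=\CC$, so $X$ is connected and inner conjugation is trivial. Every intermediate von Neumann algebra $P\in\mathcal L(N\subset M)$ is $E_N$-compatible through its trace-preserving expectation. By the discussion preceding Theorem~\ref{thm:cstar-general-bound}, each such $P$ therefore yields a separable Frobenius subalgebra ${}_NP_N\to X$, with $j^\dagger=E_P$. Distinct $P$ give distinct Jones projections $j\circ j^*=e_P\in N'\cap M_1$, hence inequivalent subalgebras over $X$; in fact the exchange idempotent recovers $P$ as its image in $M$. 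The connected case of Theorem~\ref{thm:exchange-bound} then gives $|\mathcal L|\le N_F(X)\le 2^{r_X}$.

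\textbf{Second inequality.} This is exactly \eqref{eq:exchange-coarse}, which applies because $X\neq0$. Combined with \eqref{eq:end-relative-commutant}, it gives $2^{r_X}\le 2^{\dim_{\CC}(N'\cap M_1)-1}$.

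\textbf{Third inequality.} This follows from \eqref{eq:relative-commutant-index}, namely $\dim_{\CC}(N'\cap M_1)\le[M:N]$, which was obtained from Proposition~\ref{prop:dimension-comparison} applied to $X$. The hypotheses of that proposition hold: $\C$ is positive semisimple, $X$ is connected, and $X$ is separable by \eqref{eq:unitary-section}. Also $d(X)=[M:N]$ by \eqref{eq:unitary-frobenius-index}. Because the dimension is an integer, it is at most $\lfloor[M:N]\rfloor$.

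\textbf{Final inequality.} This is trivial: $\lfloor I\rfloor-1<I$.

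The main obstacle is the injectivity in the first step, namely that distinct intermediate algebras give distinct points of $\Exch(X)$ and not merely distinct classes up to some weaker equivalence. I would settle this with the Ghosh--Palcoux correspondence (Theorem~\ref{thm:exchange-correspondence}). Under it, the point attached to $P$ is $e_P=j\circ j^\dagger$, the projection onto $L^2(P)$. Since $P=\{x\in M: e_Px e_N=xe_N\}$, or simply $P$ is the range of $e_P$ applied to $M$, the point $e_P$ determines $P$. This shows the map $\mathcal L(N\subset M)\to\Exch(X)$ is injective onto separable points, and Lemma~\ref{lem:exchange-isolated} together with the count in Theorem~\ref{thm:exchange-bound} then finishes the proof.
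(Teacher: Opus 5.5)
Your proposal is correct and follows essentially the same route as the paper. Irreducibility makes $X$ connected, so Theorem~\ref{thm:exchange-bound} gives the first two inequalities, and Proposition~\ref{prop:dimension-comparison}, via \eqref{eq:relative-commutant-index}, gives the third. Your injectivity check through the projections $e_P$ spells out a step that the paper leaves implicit.
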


\begin{proof}
Irreducibility makes $X$ connected and the conjugation action trivial.
It also forces every intermediate von Neumann algebra to be a factor,
since $Z(P)\subseteq N'\cap M=\CC1$.
Theorem~\ref{thm:exchange-bound} gives the first two inequalities.
Proposition~\ref{prop:dimension-comparison} gives
$\dim_{\CC}(N'\cap M_1)\leq d(X)\leq I$, proving the rest.
\end{proof}

The relative-commutant form is considerably sharper than its index-only
corollary whenever $X$ has few simple summands.  For
example, a proper $2$-supertransitive inclusion has
$\dim_{\CC}(N'\cap M_1)=2$, and
Theorem~\ref{thm:subfactor-bound} gives
$|\mathcal L(N\subset M)|\leq2$; hence there is no nontrivial intermediate
subfactor.  At the opposite extreme, for a depth-two inclusion the difference
\eqref{eq:subfactor-defect} vanishes, so replacing the relative-commutant
dimension by the index loses no information.

\subsection{Comparison with angle rigidity}

For earlier qualitative finiteness results, see Watatani
\cite{WatataniLattices} and the type~III treatment \cite{TeruyaWatatani}.
Longo \cite{Longo2003} gives a
unified argument and explicit bounds; for a further bound on irreducible
$\sigma$-finite type~III inclusions of finite Watatani index, see
\cite{BakshiGupta}.

Throughout this comparison, $N\subset M$ is a finite-index irreducible
$\mathrm{II}_1$ subfactor.  Set
\[
 I:=[M:N],
 \qquad
 D:=\dim_{\CC}(N'\cap M_1)
   =\dim_{\mathbb R}(N'\cap M_1)_{\mathrm{sa}}.
\]
Bakshi--Das--Liu--Ren proved that the number of minimal intermediate
subfactors is at most the kissing number $\tau_D$
\cite[Theorem~4.1]{BakshiDasLiuRen}.  Their elementary estimate
$\tau_D<3^D$, iterated over successive subintervals, gives the
whole-lattice bound
\begin{equation}\label{eq:bdlr-general}
 |\mathcal L(N\subset M)|\leq9^I
 \qquad\text{\cite[Theorem~4.6]{BakshiDasLiuRen}}.
\end{equation}
The bound $\tau_D\leq(\sqrt6)^D$ in
\cite{PalcouxSqrtSix}, whose Euclidean-volume proof is formalized in
Lean~4/Mathlib and registered in Palomar, gives
$|\mathcal L(N\subset M)|\leq6^I$ by the same recursion.

The quadratic exchange bound instead counts the whole lattice directly:
\begin{equation}\label{eq:comparison-bounds}
 |\mathcal L(N\subset M)|
 \leq2^{D-1}
 \leq2^{\lfloor I\rfloor-1}
 <6^I.
\end{equation}
Thus it improves these elementary index-only bounds and is sensitive to the
second relative commutant.  The two methods nevertheless capture different
information: angular separation controls minimal intermediates, while
local rigidity isolates all exchange idempotents at once.

For asymptotic comparison, the Kabatiansky--Levenshtein bound
\cite{KabatianskyLevenshtein}, combined with the larger-angle comparison
for spherical codes \cite[\S{}2, (2.5)--(2.6) and footnote~1]{CohnZhao},
gives $\tau_D\leq2^{(0.401+o(1))D}$.  The angle recursion therefore yields
$2^{(0.802+o(1))I}$, asymptotically sharper than the index-only
specialization of \eqref{eq:subfactor-bound-chain}.  The local
bound remains direct and can be much smaller when $D\ll I$.  It may be
possible to improve it further by exploiting the sparse or
multihomogeneous structure of the exchange equations.  When
$N'\cap M_1$ is abelian, the quasi-polynomial whole-lattice estimate of
\cite{BakshiDasLiuRen} is asymptotically stronger than these
index-only exponential estimates.  The angle and local
rigidity methods are therefore complementary.
\section{Examples and applications}\label{sec:examples}

The examples below separate the local mechanism from the additional
hypotheses used in the applications.

\begin{example}[Vector spaces]
In $\Vect_{\kk}$, separability is equivalent to semisimplicity
\cite[Chapter~II, \S2]{DeMeyerIngraham}, and the Wedderburn--Artin theorem
\cite[Chapter~1, \S\S2--3]{Lam} classifies finite-dimensional semisimple
$\kk$-algebras as finite direct sums of full matrix algebras over $\kk$.  Thus,
for fixed $d=\dim_{\kk}A$, there are only finitely many isomorphism classes.
\end{example}

\begin{example}[Why inner conjugacy is necessary]
For $A=\kk^2$ and $X=M_2(\kk)$, the embeddings $A\to X$ form an
infinite algebraic family of rank-one idempotents, but a single
$\mathrm{GL}_2(\kk)$-conjugacy class.  Hence inner conjugacy is the correct
general conclusion for a nonconnected ambient algebra.
\end{example}

\begin{application}[Rational representations of linearly reductive groups]
Let $G$ be a linearly reductive algebraic group over the algebraically closed
field $\kk$, and let $X$ be a finite-dimensional rational $G$-algebra.  If
$X^G=\kk 1$, then $X$ has only
finitely many $G$-stable separable unital subalgebras.  More generally, for a
fixed rational $G$-module $B$, there are only finitely many equivariant
separable algebra structures on $B$ up to equivariant algebra isomorphism.
For a $G$-stable algebra, linear reductivity allows an ordinary
separability section to be chosen $G$-equivariantly.  The subalgebra
assertion therefore follows from Corollary~\ref{cor:semisimple}, whereas
the fixed-module assertion follows from Theorem~\ref{thm:fixed-object},
both applied in $\Rep(G)$.
\end{application}

The same corollary applies to every semisimple tensor category.  In
particular, the ambient category need not be finite; that is, it need not be
a fusion category.
\Needspace{8\baselineskip}
\section{Scope, sharpness, and open directions}\label{sec:scope}

\subsection{Where each assumption enters}

It is useful to separate the hypotheses needed for local rigidity from those
used only in the global applications.  Hom-finiteness makes the relevant
morphism spaces finite-dimensional affine spaces, while separability
contracts Hochschild cohomology in degrees one and two.  These are the inputs
in the two local orbit calculations; neither semisimplicity nor unitarity is
used there.  Connectedness of the ambient algebra has a different role: it makes
$\Gamma(X)$ one-dimensional, so inner conjugation is trivial.  It is not
needed for fixed-object rigidity.

To pass from one fixed source object to all arbitrary separable algebra
subobjects of $X$ by the argument of \S\ref{sec:subobjects}, one needs
finitely many underlying subobject isomorphism classes.  Semisimplicity and
finite length are sufficient for this step.  Compatible Frobenius
subalgebras instead have split embeddings and are encoded by exchange
idempotents in $\End_{\C}(X)$.  The resulting bound on inner-conjugacy
classes in Theorem~\ref{thm:exchange-bound} requires neither
semisimplicity, finite length, nor a finite-subobject hypothesis;
connectedness turns that bound into an actual count.  For connected Frobenius
algebras, separability follows from nonzero pivotal dimension
(Lemma~\ref{lem:pivotal-frobenius-separable}); positivity ensures this
nonvanishing and also enters the categorical-dimension estimates.  Unitarity supplies the $\dagger$-compatible structures
used in the operator-algebraic applications.

Finally, degree estimates play no part in rigidity itself.  They turn
openness of classes and isolation of points into numerical bounds, whose
sharpness depends on the equations chosen to present the parameter space.

\subsection{The limits of local rigidity}

The following examples explain why separability is essential and what
openness of the separable orbits does not imply.  First, an open
isomorphism orbit need not be closed: a family of isomorphic separable
algebras can have a nonseparable limit.  On the fixed vector
space $\kk 1\oplus\kk x$, let $A_t$ be the algebra defined by
$x^2=t x$.  If $t\ne0$, then $t^{-1}x$ and $1-t^{-1}x$ are orthogonal
idempotents, so $A_t\cong\kk\times\kk$ and is separable.  At $t=0$ one
obtains $A_0=\kk[x]/(x^2)$, the dual-number algebra; its generator $x$ is
nonzero and square-zero, so $A_0$ is nonsemisimple and nonseparable.  Here
``degeneration'' simply means that the multiplication table depends
polynomially on $t$ and is specialized at $t=0$.

Second, separability of the source is essential for homomorphism
rigidity.  The algebra maps
\[
  \kk[\zeta]/(\zeta^2)
  \longrightarrow
  \kk[u]/(u^2),
  \qquad
  \zeta\longmapsto a u,
  \quad a\in\kk,
\]
form an affine line of distinct maps.  Since the codomain is commutative,
these maps lie in distinct inner-conjugacy classes.

\subsection{The nonsemisimple ambient case}

Finite length alone does not imply that an object has only finitely many
subobject isomorphism classes, even in a Hom-finite tensor category.
Consider finite-dimensional rational representations of
$G=\mathbb G_a^2$ over $\kk$.  On
$V=\kk e_0\oplus\kk e_1\oplus\kk e_2$, define
\[
 \rho(s,t)e_0=e_0,\qquad
 \rho(s,t)e_1=e_1+s e_0,\qquad
 \rho(s,t)e_2=e_2+t e_0.
\]
These formulas define a rational representation in every characteristic.
For $[a:b]\in\mathbb P^1(\kk)$, the subspace
\[
 W_{[a:b]}=\kk e_0\oplus\kk(ae_1+be_2)
\]
is $G$-stable.  On its second displayed basis vector the action adds
$(as+bt)e_0$.  Its fixed subspace is exactly $\kk e_0$, so an isomorphism
between two such representations must preserve this line.  Equivariance
then forces their pairs $(a,b)$ to be proportional.  Thus the
$W_{[a:b]}$ are pairwise nonisomorphic as $[a:b]$ varies, although all are
subobjects of the single finite-length object $V$.

This example concerns underlying objects; it does not assert that $V$ has
infinitely many separable algebra subobjects.  It identifies precisely why
the proof for arbitrary separable algebra subobjects cannot replace the
finite-subobject-isomorphism-class hypothesis by finite length alone.
The local theorems for each fixed object and each fixed separable source
remain valid.  Compatible Frobenius subalgebras admit a different argument:
their split embeddings are encoded in the single exchange locus, and
Theorem~\ref{thm:exchange-bound} bounds their inner-conjugacy classes
without a finite list of underlying objects or a semisimplicity assumption.

\subsection{Sources of loss and possible refinements}

The upper bounds are universal but can be far from sharp in specific
families.  There are three main sources of loss.  First, B\'ezout bounds the cumulative degree of
the entire parameter variety, including nonseparable components irrelevant
to the classification.  Second, it treats the equations as though they were
dense and independent, whereas associativity and the exchange relations have
substantial sparsity and algebraic dependencies.  Third, it does not use
stabilizer dimensions, fusion-rule symmetries, positivity, or integrality.

Several refinements are possible in principle: eliminate the linear
equations, use multihomogeneous B\'ezout bounds, decompose by ranks and
symmetries, or compute the actual cumulative degree of the relevant reduced
locus.  Sparse root counts, such as Bernshtein's theorem for sparse polynomial systems
\cite{Bernshtein}, suggest further refinements; applying them to the present
affine component count also requires control of solutions with zero coordinates and
positive-dimensional components.

There is also an asymptotic version of this problem.  For a sequence $(X_n)$
with $r_n:=r_{X_n}\to\infty$, the present estimate gives
\[
 \limsup_{n\to\infty}
 \frac{\log_2\max\{1,N_F^{\mathrm{inn}}(X_n)\}}{r_n}\leq1.
\]
For connected $X_n$, this is the bound for $N_F(X_n)$ itself.  The
local-rigidity step makes the relevant inner-conjugacy classes open and,
in the connected case, makes their points isolated; it supplies neither a
separation scale nor an asymptotic counting rate.  Any improvement of the exponent must therefore use uniform global
information about the exchange ideals, or additional categorical structure.
Likewise, a meaningful asymptotic comparison with the angle method requires
optimizing both arguments in the same parameter.

The subfactor application illustrates why the intrinsic parameter matters.
The dimension of the second relative commutant may be far smaller than the
Jones index, and the exact difference formula \eqref{eq:subfactor-defect}
measures the loss incurred by replacing the former with the latter.  In
concrete standard invariants, the exchange equations already live in a space
visible at depth two.

Remark~\ref{rem:why-base-two} explains the earlier structural improvement
from cubic multiplication-closure equations to quadratic exchange relations.
The loss discussed here occurs after that reduction: the product-of-degrees
estimate still ignores the special form of the quadratic exchange system.
A further improvement of the base must exploit that structure.

\subsection{Further questions}

The preceding discussion singles out the following questions.
\begin{enumerate}[label=\textup{\arabic*.}]
  \item For which nonsemisimple tensor categories, and for which objects $X$
  in them, does $X$ have only finitely many subobject isomorphism classes,
  so that the general argument also controls arbitrary separable algebra
  subobjects without compatible Frobenius structures?

  \item \label{q:optimal-bounds}
  In positive tensor categories, or more specifically integral fusion
  categories, can $N_F^{\mathrm{inn}}(X)\leq2^{r_X}$ be replaced
  asymptotically by a uniform bound $c^{r_X}$ with $c<2$, or even a
  subexponential bound?  For irreducible subfactors, how do the best
  asymptotic exchange and angle bounds compare when both are measured
  in the Jones index?

  \item Can arbitrary separable algebra subobjects of $X$, without a chosen
  compatible Frobenius structure, be encoded by an intrinsic quadratic
  system whose dimension is controlled by
  $\dim_{\kk}\End_{\C}(X)$?
\end{enumerate}

\subsection{Local Ocneanu rigidity and the formal-angle method}

The local Ocneanu-rigidity argument gives a direct intrinsic-dimension
bound on inner-conjugacy classes and, in the connected case, on the whole
Frobenius subalgebra poset.  The formal-angle method
captures additional structure.  The framework of
\cite{GhoshPalcouxExchange,GhoshPalcouxLattices} produces exchange relations,
a linear-monoidal version of Landau's theorem, coherent pairs and coherent
sublattices, rigidity invariance, and formal-angle invariants.  It also
isolates natural Hopf-algebraic questions, including whether left coideal
subalgebras of finite-dimensional $C^*$-Hopf algebras must be $*$-closed and
whether their lattice is coherent.

By contrast, the present argument makes algebraic and differential geometric
ideas, such as tangent spaces, algebraic group orbits, and affine degree,
explicit inside tensor category theory.  The local Ocneanu-rigidity and formal-angle
methods should therefore be viewed as complementary rather than competing.
\section*{Acknowledgments}
The authors thank Zhengwei Liu and Sergey Neshveyev for discussions of the
historical status of the unitary local-rigidity statement.
They also thank Keshab Chandra Bakshi and Sebastian Burciu for their
interest in this work and for drawing their attention to additional
relevant references.
This work was developed during the period surrounding the 2026 International
Congress of Basic Science.  Mainak Ghosh thanks the Beijing Institute of
Mathematical Sciences and Applications for its invitation and hospitality.
\paragraph{AI disclosure.}
This manuscript was written with the assistance of GPT-5.6 Sol and
GPT-6 Astra.  The authors take full responsibility for the final text.

\end{document}